\def\ZZ{{\mathbb Z}}
\def\FF{{\mathbb F}}
\def\RR{{\mathbb R}}
\def\QQ{{\mathbb Q}}
\def\OO{{\mathcal O}}
\def\pp{{\mathfrak p}}
\def\qq{{\mathfrak q}}
\def\aa{{\mathfrak a}}
\def\bb{{\mathfrak b}}
\def\mm{{\mathfrak m}}
\def\ff{{\mathfrak f}}
\def\vv{{\mathfrak v}}
\def\uu{{\mathfrak u}}
\def\frakell{{\mathfrak{l}}}
\def\LL{{\mathfrak{L}}}
\def\frakC{{\mathfrak{C}}}
\def\disc{{\text{\textnormal{disc}}}}
\def\Gal{{\text{\textnormal{Gal}}}}
\def\cl{{\text{\textnormal{Cl}}}}
\def\Cl{{\text{\textnormal{Cl}}}}
\newcommand{\End}[1]{\text{\textnormal{End}}(#1)}
\newtheorem{theorem}{Theorem}[section]
\newtheorem{lemma}[theorem]{Lemma}
\newtheorem{prop}[theorem]{Proposition}
\newtheorem{corollary}[theorem]{Corollary}
\newtheorem{definition}[theorem]{Definition}
\begin{document}
\title{Computing the endomorphism ring of an ordinary abelian surface over a finite field}

\author{Caleb Springer\fnref{fn1}}
\ead{cks5320@psu.edu}

\address{Department of Mathematics, The Pennsylvania State University, University Park,
PA 16802, USA}
\fntext[fn1]{The author was partially supported by National Science Foundation grants DMS-1056703
and CNS-1617802.}
\date{\today}

\begin{abstract}
 We present a new algorithm for computing the endomorphism ring of an ordinary abelian surface over a finite field which is subexponential and generalizes an algorithm of Bisson and Sutherland for elliptic curves.
  The correctness of this algorithm only requires the heuristic assumptions required by the  algorithm of Biasse and Fieker \cite{bf} which computes the class group of an order in a number field in subexponential time. Thus we avoid the multiple heuristic assumptions on isogeny graphs and polarized class groups which were previously required. The output of the algorithm is an ideal in the maximal totally real subfield of the endomorphism algebra, generalizing the elliptic curve case.
\end{abstract}

\maketitle

\section{Introduction}
Computing the endomorphism ring of an abelian variety is a fundamental problem of computational number theory with applications in cryptography. 
Consider the foundational case when $E$ is an ordinary elliptic curve over $\FF_q$. In this case, $\End E$ is isomorphic to an order in the quadratic imaginary field $K = \QQ(\pi)$ where $\pi$ is the Frobenius of $E$. Further, the orders in $K$ which contain $\pi$ are precisely the orders which arise as the endomorphism ring of an elliptic curve $E'$ over $\FF_q$ which is isogenous to $E$ \cite[Theorem 4.2]{waterhouse}.  Because $K$ is a quadratic imaginary field, the orders of $K$ are uniquely identified by their index in $\OO_K$. Thus, computing $\End E$ is equivalent to computing the index $[\OO_K : \End E]$, which is a divisor of $[\OO_K : \ZZ[\pi]]$.

The first algorithm for computing $\End E$ for an ordinary elliptic curve $E$ was given by Kohel in his thesis \cite{kohel}.  Kohel's algorithm is deterministic and has exponential expected runtime $O(q^{\epsilon + 1/3})$, assuming GRH.  The key fact needed for this algorithm was Kohel's discovery that certain $\ell$-isogeny graphs (i.e., graphs whose vertices can be identified with isomorphism classes of elliptic curves and whose edges can be identified with isogenies of prime degree $\ell$) have a special ``volcano" structure.
By navigating the volcano graphs, one may deduce the prime factors of $[\OO_K : \End E]$, and thus determine $\End E$ as desired, although this method is slow when working with large prime  factors.

To make a faster algorithm for computing $\End E$, Bisson and Sutherland exploited the following fact, which was proven by Waterhouse \cite[Theorem 4.5]{waterhouse}.   Given an order $\OO\subseteq K$, the class group $\Cl(\OO)$ acts faithfully on the set of isomorphism classes of elliptic curves isogenous to $E$ with endomorphism ring isomorphic to $\OO$, where an ideal of norm $\ell$ acts via an isogeny of degree $\ell$.
Thus, one may deduce information about the class group of $\End E$ by computing various isogenies.  In particular, one may compare $\cl(\End E)$ to $\cl(\OO)$ for known ``testing" orders $\OO\subseteq K$ by simply computing class group relations.  Bisson and Sutherland prove that determining which relations hold in $\cl(\End E)$ is sufficient for determining the large prime factors of ${[\OO_K : \End E]}$, which leads to a probabilistic algorithm to determine $[\OO_K : \End E]$ in subexponential time
 \cite{bisson-sutherland}.  

Bisson extended the algorithm above to absolutely simple, principally polarized, ordinary abelian varieties of dimension 2 in \cite{bisson}. The endomorphism ring of such an abelian variety is an order in a quartic CM field.  Because the isogeny graph structure of ordinary abelian varieties of dimension 2 was essentially unknown at the time, the correctness of Bisson's algorithm required multiple heuristic assumptions about the relevant isogeny graphs and polarized class groups.  
Additionally, these orders are no longer uniquely identified by their index, as in the elliptic curve case.  Instead, Bisson identifies an order $\OO$ by a ``lattice of relations" which hold in $\OO$.

This paper gives a different generalization of Bisson and Sutherland's elliptic curve algorithm.  Our algorithm identifies orders by ideals in the maximal totally real subfield of the endomorphism algebra, and its correctness only relies on the assumptions required for computing the class group of an order in a number field in subexponential time, as in \cite{bf}.    As a trade-off, we must explicitly restrict the class of abelian varieties that we consider.

\subsection{Main Result}
Let $A$ be an absolutely simple, principally polarized, ordinary abelian variety of dimension 2 defined over a finite field $\FF_q$ with Frobenius $\pi$.  Then $\End A$ is an order containing $\ZZ[\pi, \overline\pi]$ in the quartic CM field $K = \QQ(\pi)$.  We will assume that $A$ has maximal RM, or maximal \emph{real multiplication}, which means that $\End A \supseteq \OO_F$ where $F$ is the maximal totally real subfield of $K$.  According to Brooks, Jetchev and Wesolowski \cite[Theorem 2.1]{bjw}, the orders $\OO\subseteq K$ which contain $\OO_F$ are in bijective correspondence with ideals of $\OO_F$, where $\OO$ is associated to $\ff\cap \OO_F$ and $\ff$ is the conductor ideal of $\OO$.  This allows our algorithm to have a simple output, namely the ideal of $\OO_F$ which uniquely identifies $\End A$.

The foundation of our algorithm is the free action of the ideal class group $\cl(\OO)$ on the set of isomorphism classes of abelian varieties isogenous to $A$ with endomorphism ring $\OO\subseteq K$.  Just like in the elliptic curve case, this means that a product of ideals is trivial in $\cl(\End A)$ if and only if the corresponding isogeny maps $A$ to an isomorphic abelian variety.  Using the same tactic as Bisson and Sutherland, we compare $\cl(\End A)$ to $\cl(\OO)$ for known ``testing" orders $\OO\subseteq K$ by generating class group relations. When $\End A$ contains $\OO_F$, we show through class field theory that class group relations are sufficient for determining the large prime factors of the ideal $\ff^+$ which identifies the endomorphism ring $\End A$.  

However, unlike the elliptic curve case, it is not always possible to compute the action of $\cl(\OO)$ on principally polarized abelian varieties of dimension $g > 1$.  In fact, the isogenies corresponding to elements of $\cl(\OO)$ do not even preserve principal polarizability in general.  Bisson avoided this obstruction for his abelian surface algorithm by instead working with a convenient subgroup of the \emph{polarized class group} $\frakC(\OO)$, which has a nicer action that respects polarizations. Unfortunately, the structure of $\frakC(\OO)$ cannot be analyzed through machinery like class field theory in the same way that we can analyze $\cl(\OO)$, so it is difficult to demonstrate the existence of sufficiently many relations in $\frakC(\OO)$.

For our algorithm, we ensure that it is possible to compute the action of $\cl(\OO)$ by restricting our attention to the case when $F$ has narrow class number 1.  In this case, there is a surjective map from $\frakC(\OO)$ to $\cl(\OO)$ and the action of $\cl(\OO)$ preserves principal polarizability.  We also assume that the index $[\OO_F : \ZZ[\pi + \overline \pi]]$ is not even, and that $\OO_K^* = \OO_F^*$ so that all isogenies are computable via Cosset, Robert, Dudeanu, et al. \cite{cr, djrv}, and certain isogeny graphs are volcanoes \cite{bjw}.  This latter fact allows us to determine the small prime factors of $\ff^+(A)$, much like in
 the elliptic curve case.

In summary, we focus our attention on absolutely simple, principally polarized, ordinary abelian surfaces $A$ for which the following is true:
 \begin{enumerate}
	 \item $A$ has maximal real multiplication by $F$.
	 \item $\OO_K^* = \OO_F^*$.
	 \item $F$ has narrow class number 1.
	 \item The conductor gap $[\OO_F : \ZZ[\pi + \overline \pi]]$ is not divisible by 2.
 \end{enumerate}
 
\begin{theorem}\label{main-thm}
There is a subexponential algorithm which, given an abelian variety $A$ of dimension $2$ satisfying the conditions above, outputs the ideal of $\OO_F$ uniquely identifying $\End A$.
 \end{theorem}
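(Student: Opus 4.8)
The plan is to follow the Bisson--Sutherland framework for elliptic curves \cite{bisson-sutherland}, replacing their elliptic-curve ingredients by the abelian-surface analogues of \cite{bjw,cr,djrv} and their heuristics on isogeny graphs by the class-group heuristics of \cite{bf}. After computing $K=\QQ(\pi)$, its maximal totally real subfield $F$, the Frobenius $\pi$ and its conjugate $\overline\pi$, and checking conditions (1)--(4), write $\ff^+(\OO)$ for the ideal of $\OO_F$ attached to an order $\OO_F\subseteq\OO\subseteq\OO_K$ by \cite[Theorem 2.1]{bjw}; computing $\End A$ is the same as computing $\ff^+(A):=\ff^+(\End A)$. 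Since $\End A\supseteq\OO_F[\pi,\overline\pi]$, the conductor of $\End A$ divides that of $\OO_F[\pi,\overline\pi]$, so $\ff^+(A)$ divides the computable ideal $\mathfrak b:=\ff^+(\OO_F[\pi,\overline\pi])$ of $\OO_F$. Because $\End A$ is contained in the order $\OO$ with $\ff^+(\OO)=\frakell^{\,j}$ if and only if $v_{\frakell}(\ff^+(A))\ge j$, it suffices to decide, for each of the $O(\log q)$ primes $\frakell$ of $\OO_F$ dividing $\mathfrak b$ and each $j\ge 1$, whether $\End A\subseteq\OO$ for this test order. Throughout we use the free action of $\cl(\OO)$ on the set of isomorphism classes of principally polarized abelian surfaces isogenous to $A$ with endomorphism ring $\OO$: conditions (1)--(3) guarantee this action is well defined on principally polarized surfaces and that, by \cite{cr,djrv}, an $\OO$-ideal of smooth norm acts by an explicitly computable chain of prime-degree isogenies. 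In particular, a relation $\prod_i\pp_i^{e_i}$ among split primes $\pp_i$ of $\OO_K$ of smooth norm holds in $\cl(\End A)$ exactly when the corresponding isogeny chain returns $A$ to an isomorphic surface, which is directly testable. Fix a subexponential cutoff $L=L_q[\tfrac12]$ and split the primes $\frakell\mid\mathfrak b$ according to whether $\mathrm{N}_{F/\QQ}(\frakell)\le L$.

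\emph{Small primes.} For $\mathrm{N}_{F/\QQ}(\frakell)\le L$ we walk the $\frakell$-isogeny volcano. Conditions (2) and (4) make the $\frakell$-isogeny graph through $A$ a volcano in the sense of \cite{bjw}, in which an abelian surface whose endomorphism ring has $\frakell$-conductor exponent $i$ sits at depth exactly $i$. Computing $\frakell$-isogenies from $A$ and its successors via \cite{cr,djrv}, we determine the depth of $A$ in its component, which is $v_{\frakell}(\ff^+(A))$. There are $O(\log q)$ such primes, each of norm at most $L$, so this stage runs in subexponential time.

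\emph{Large primes.} For $\mathrm{N}_{F/\QQ}(\frakell)>L$ we decide $\End A\subseteq\OO$, for successive test orders $\OO$ with $\ff^+(\OO)=\frakell^{\,j}$, using class-group relations. If $\End A\subseteq\OO$, the surjection $\cl(\End A)\twoheadrightarrow\cl(\OO)$ forces every relation that holds in $\cl(\End A)$ to hold in $\cl(\OO)$; hence a relation holding in $\cl(\OO)$ but failing on $A$ certifies $\End A\not\subseteq\OO$. Conversely, when $\End A\not\subseteq\OO$ such certifying relations are plentiful: writing $\OO'$ for the order with $\ff^+(\OO')=\frakell^{\,j-1}$, the class field theory exact sequence for orders identifies the kernel of the natural surjection $\cl(\OO)\twoheadrightarrow\cl(\OO')$ with a quotient of $(\OO_K/\frakell\OO_K)^*$ by $(\OO_F/\frakell)^*$ and the image of $\OO^*$; since $\OO_K^*=\OO_F^*$ by condition (2), the units contribute nothing beyond $(\OO_F/\frakell)^*$, so this kernel has order comparable to $\mathrm{N}_{F/\QQ}(\frakell)$ and is therefore large. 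Hence, subject only to the heuristics of \cite{bf}, we compute $\cl(\OO)$ together with a generating set of its relations using \cite{bf}, evaluate each relation on $A$, and conclude $\End A\subseteq\OO$ if they all hold and $\End A\not\subseteq\OO$ otherwise, correctly with overwhelming probability and in subexponential time. Iterating over $j$ and over the large primes $\frakell\mid\mathfrak b$, and combining with the volcano data, determines $\ff^+(A)$, and hence by \cite[Theorem 2.1]{bjw} the ring $\End A$.

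The main obstacle is the correctness of the large-prime step: one must show that whenever $\End A\not\subseteq\OO$, the relations holding in $\cl(\OO)$ but failing on $A$ have density bounded below uniformly in $q$, so that a subexponential list of random smooth relations contains one. This is where the class field theory computation of the kernel of $\cl(\OO)\to\cl(\OO')$ is used, and where the hypotheses that $F$ has narrow class number $1$ and that $\OO_K^*=\OO_F^*$ are essential: the former makes the class-group action available on principally polarized surfaces and makes the comparison map behave as in the elliptic-curve case, while the latter keeps the unit contribution negligible so that the separating quotient stays large. Once this density estimate is in place, the remaining points --- smoothness probabilities for the relevant norm forms, the cost of the isogeny computations of \cite{cr,djrv}, the number of relations required, and the fact that no heuristics on isogeny graphs or polarized class groups are invoked --- amount to routine bookkeeping.
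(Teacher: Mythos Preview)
Your large-prime step contains a direction error that breaks the algorithm. You correctly note that when $\End A\subseteq\OO$ the surjection $\cl(\End A)\twoheadrightarrow\cl(\OO)$ sends every relation that holds in $\cl(\End A)$ to a relation in $\cl(\OO)$; but from this it does \emph{not} follow that ``a relation holding in $\cl(\OO)$ but failing on $A$ certifies $\End A\not\subseteq\OO$.'' The implication you have goes the other way: if $\End A\supseteq\OO$ then every relation of $\cl(\OO)$ holds on $A$, whereas if $\End A\subsetneq\OO$ (precisely the case $v_{\frakell}(\ff^+(A))\ge j$ you wish to detect) then $\cl(\End A)$ is a larger group and a generic relation of $\cl(\OO)$ will \emph{fail} on $A$. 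Consequently your test ``all generating relations of $\cl(\OO(\frakell^{j}))$ hold on $A$'' heuristically detects $\End A\supseteq\OO(\frakell^{j})$, i.e.\ $\ff^+(A)\mid\frakell^{j}$; as soon as $\ff^+(A)$ has a second large prime factor this tells you nothing about $v_{\frakell}(\ff^+(A))$, and your kernel computation for $\cl(\OO)\to\cl(\OO')$ is answering a different question than the one you need.

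The paper repairs this by choosing the relation in a carefully placed \emph{pair} of orders rather than in $\OO(\frakell^{j})$ alone. With $\vv=\ff^+(\OO_F[\pi])$, for each large prime power $\pp^k\mid\vv$ it sets $\ff_1^+$ to be $\vv$ with its $\pp$-part reduced to $\pp^{k-1}$ and $\ff_2^+=\pp^k$, and produces a single relation $R$ holding in $\OO(\ff_1^+)$ but not in $\OO(\ff_2^+)$. The key point is that whenever $\pp^k\nmid\ff^+(A)$ one has $\ff^+(A)\mid\ff_1^+$, so $\OO(\ff_1^+)\subseteq\End A$ and $R$ holds on $A$; and whenever $\pp^k\mid\ff^+(A)$ one has $\End A\subseteq\OO(\ff_2^+)$, so $R$ fails on $A$. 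Thus one test of one relation decides $\pp^k\mid\ff^+(A)$. The existence of such $R$ is established via ring class fields: the paper shows (using $\OO_K^*=\OO_F^*$) that the class-field-theoretic conductor of the ring class field agrees with the order conductor away from primes of norm $2$, so the ring class fields of $\OO(\ff_1^+)$ and $\OO(\ff_2^+)$ are not nested and Chebotarev supplies infinitely many separating primes. Your volcano treatment of the small primes and the overall bookkeeping are fine, but the large-prime test as written does not decide the containment you claim.
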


 Note that the restrictions imposed by Theorem \ref{main-thm} make this algorithm considerably less general than the algorithm of Bisson, which accepts all absolutely simple, principally polarized, ordinary abelian surfaces, with only the exclusion of a certain zero-density set of worst-case varieties. However, our extra restrictions provide benefits as a trade-off.  First, the correctness of our algorithm relies solely on the heuristic assumptions required for Biasse and Fieker's subexponential algorithm for solving the principal ideal problem \cite{biasse, bf}, thereby avoiding the unconventional heuristic assumptions needed for Bisson's algorithm.
 Additionally, the zero-density set excluded by Bisson's algorithm is not explicitly known, while the conditions for Theorem \ref{main-thm} are explicit and verifiable.
 
With the notation,
$$
	L[a,c](n) = \exp\left( (c+o(1))\log(n)^a(\log\log n)^{1-a}\right)	
$$ 
we heuristically bound the asymptotic runtime of the algorithm by
$$
	L[1/2, 2c](q)+ L[1/2, 2\sqrt{d+1}](q).
$$
where $c$ and $d$ are constants corresponding to the difficulty of the principal ideal problem \cite{biasse, bf} and isogeny computation \cite{cr, djrv}, respectively.  The algorithm can also be modified to produce a short certificate which allows a third party to verify the correctness of the output. This takes subexponential time using the same heuristic assumptions as before.

\subsection{Organization}
The paper is organized as follows.  In Sections 2 and 3, we present background material for abelian varieties over finite fields with maximal RM and prove that class group relations are sufficient to distinguish orders in CM fields. In Section 4, we restrict to the case of abelian varieties of dimension $g=2$ with the special properties outlined above, present the algorithm and analyze the runtime under certain heuristic assumptions.  In Section 5, we present an illustrative example.

\subsection{Acknowledgements}
The author would like to thank Benjamin Wesolowski for his helpful discussions on isogeny graphs, and for explaining a proof for Lemma \ref{deg-frak-iso}. The author also thanks David Kohel for his suggested potential improvement of the main theorem, and Kirsten Eisentr\"ager, Gaetan Bisson and Andrew Sutherland for their many helpful comments.

\section{Background \label{prelim}}

\subsection{Notation}
Fix an absolutely simple, principally polarized, ordinary abelian variety $A$ of dimension $g$ over $\FF_q$ with Frobenius endomorphism $\pi$.  Using Pila's algorithm \cite{pila}, the characteristic polynomial $f_\pi$ of $\pi$ can be computed in time polynomial in $\log q$. The polynomial $f_\pi(x)\in \ZZ[x]$ encodes many features of $A$, such as the isogeny class, the number of $\FF_q$-rational points, and the fact that $A$ is ordinary \cite{tate}.  In the remainder of the paper, we will assume that $f_\pi$ is known and an embedding $\End A \hookrightarrow \QQ(\pi) \cong \QQ[x]/(f_\pi)$ is given for simplicity.  The field $\QQ(\pi)$ is a CM field of degree $2g$ which we denote by $K$.  Let $F$ denote its maximal totally real subfield.  We will write $\OO$ for an arbitrary order in $K$ which is possibly non-maximal.

\subsection{Identifying orders}
\label{identify-orders}

Waterhouse \cite[Theorem 7.4]{waterhouse} showed that orders of $K$ arising as endomorphism rings of abelian varieties isogenous to $A$ are precisely the orders $\OO$ satisfying $\ZZ[\pi, \overline \pi] \subseteq \OO \subseteq \OO_K$.  Our goal is to determine which order is $\End A$, although it is not clear how to uniquely identify such orders in a simple way.  In the elliptic curve case, $K$ is a quadratic imaginary field and for each positive integer $n$ there is precisely one order $\OO$ of index $n$ inside $\OO_K$, hence the orders of $K$ can be uniquely identified by positive integers. To identify orders in CM fields in general, we will restrict our attention to abelian varieties with \emph{maximal RM}, i.e. $\End A$ is an order containing $\OO_F[\pi]$.

  Recall that, given $\OO\subseteq \OO_K$, the \emph{conductor ideal} of $\OO$ is defined as the ideal
  $$
  	\ff = \{\alpha\in K : \alpha\OO_K \subseteq \OO\}.
  $$
  Notice that $\ff$ is the largest subset of $K$ which is an ideal in both $\OO_K$ and $\OO$.  When $K$ is a quadratic imaginary field, the conductor ideal $\ff$ of an order $\OO$ is the ideal of $K$ generated by the integer $[\OO_K : \OO]$.  Hence the following theorem is a generalization of the fact that orders in a quadratic imaginary field are uniquely determined by their index.

\begin{theorem}[Theorem 2.1, \cite{bjw}]
Orders in $K$ containing $\OO_F$ are in bijective correspondence with ideals of $\OO_F$, as follows.  To each order $\OO\subseteq K$ containing $\OO_F$, associate the ideal $\ff^+ = \ff \cap \OO_F$ where 
$\ff\subseteq \OO_K$ is the conductor ideal of $\OO$.  Similarly, to any ideal $\ff^+\subseteq \OO_F$, associate the order $\OO_F + \ff^+\OO_K$.

\label{ideal}
 \end{theorem}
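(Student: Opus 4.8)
The plan is to reduce Theorem \ref{ideal} to a computation over the Dedekind domain $\OO_F$, exploiting that $K/F$ is a quadratic extension and that an order of $K$ containing $\OO_F$, as well as its conductor, is recovered from its localizations at the nonzero primes $\pp$ of $\OO_F$.

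\emph{Module structure.} Since $[K:F]=2$, the ring $\OO_K$ is a finitely generated torsion-free $\OO_F$-module of rank $2$, hence projective; moreover $\OO_K/\OO_F$ is torsion-free of rank $1$, because $n\alpha\in\OO_F$ with $0\neq n\in\OO_F$ and $\alpha\in\OO_K$ forces $\alpha\in F\cap\OO_K=\OO_F$. Therefore each localization $(\OO_K)_\pp$ is free over the DVR $(\OO_F)_\pp$ with a basis $\{1,\omega_\pp\}$, obtained by lifting a generator of the free rank-one module $(\OO_K/\OO_F)_\pp$. I would then classify the $(\OO_F)_\pp$-submodules $M$ with $(\OO_F)_\pp\subseteq M\subseteq(\OO_K)_\pp$: writing $\varpi$ for a uniformizer at $\pp$ and projecting onto the $\omega_\pp$-coordinate shows $M=(\OO_F)_\pp$ or $M=M_n:=(\OO_F)_\pp+\varpi^{\,n}(\OO_F)_\pp\,\omega_\pp$ for a unique $n\geq 0$, the rank-two ones — hence all localizations of orders of $K$ — being exactly the $M_n$. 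With $\omega_\pp^2=a+b\,\omega_\pp$, $a,b\in(\OO_F)_\pp$, one has $(\varpi^n\omega_\pp)^2=\varpi^{2n}a+\varpi^{\,n}(\varpi^{\,n}b)\,\omega_\pp\in M_n$, so each $M_n$ is automatically a subring; thus the $\OO_F$-orders between $\OO_F$ and $\OO_K$ are precisely those with $\OO_\pp=M_{n_\pp}$ at every $\pp$, where $n_\pp=0$ for all but finitely many $\pp$ (those dividing $[\OO_K:\OO]$).

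\emph{Conductor and the bijection.} A short calculation in the basis $\{1,\omega_\pp\}$ shows that the conductor of $M_n$ inside $(\OO_K)_\pp$ is $\varpi^{\,n}(\OO_K)_\pp$, whence $M_n\cap(\OO_F)_\pp=\pp^{\,n}(\OO_F)_\pp$. Since forming the conductor $\ff$ of $\OO$ and the ideal $\ff^+=\ff\cap\OO_F$ commutes with localization at primes of $\OO_F$, this gives $v_\pp(\ff^+)=n_\pp$ whenever $\OO_\pp=M_{n_\pp}$. Conversely, for a nonzero ideal $\ff^+$ of $\OO_F$, the order $\OO_F+\ff^+\OO_K$ localizes at $\pp$ to $(\OO_F)_\pp+\pp^{\,v_\pp(\ff^+)}(\OO_K)_\pp=M_{v_\pp(\ff^+)}$, using $\pp^{\,v_\pp(\ff^+)}(\OO_F)_\pp\subseteq(\OO_F)_\pp$. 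Since an $\OO_F$-submodule of $K$ that spans $K$ is the intersection of its localizations, and since ideals of $\OO_F$ are determined by their $\pp$-adic valuations, the maps $\OO\mapsto\ff^+$ and $\ff^+\mapsto\OO_F+\ff^+\OO_K$ are mutually inverse, which is the assertion.

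\emph{Main obstacle.} The delicate point is the module-theoretic setup of the first step: $\OO_K$ need not be free over $\OO_F$ globally (its class in $\Pic(\OO_F)$ may be nontrivial), so one genuinely must pass to localizations to get a basis $\{1,\omega_\pp\}$, then run the submodule classification and observe that no multiplicativity hypothesis is required; everything afterward is bookkeeping with localizations. As an alternative, one could argue globally after noting that every order $\OO$ with $\OO_F\subseteq\OO\subseteq\OO_K$ is stable under the nontrivial automorphism $c$ of $K/F$, since $\alpha+c(\alpha)=\mathrm{Tr}_{K/F}(\alpha)\in\OO_K\cap F=\OO_F\subseteq\OO$ for every $\alpha\in\OO$, and then proving directly that $\ff=\ff^+\OO_K$ and $\OO=\OO_F+\ff$; but verifying these global identities essentially recreates the local analysis, so I would present the localization argument.
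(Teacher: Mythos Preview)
The paper does not supply its own proof of this statement; it is quoted from \cite{bjw} and used as a black box, so there is no in-paper argument to compare against. Your localization approach is correct and is the standard way to establish this kind of result: reduce to the DVR $(\OO_F)_\pp$, classify the intermediate $(\OO_F)_\pp$-modules $M_n=(\OO_F)_\pp+\varpi^{\,n}(\OO_F)_\pp\,\omega_\pp$, check they are rings, compute their conductors, and then glue via the local--global principle for finitely generated torsion-free modules over a Dedekind domain.

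One slip to fix: in the conductor paragraph you write ``whence $M_n\cap(\OO_F)_\pp=\pp^{\,n}(\OO_F)_\pp$'', but $M_n$ contains $(\OO_F)_\pp$, so that intersection is all of $(\OO_F)_\pp$. You evidently mean that the \emph{conductor} $\varpi^{\,n}(\OO_K)_\pp$ intersected with $(\OO_F)_\pp$ equals $\pp^{\,n}(\OO_F)_\pp$, which is what your computation actually shows and what feeds into $v_\pp(\ff^+)=n_\pp$.
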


 \begin{center}
 \begin{algorithm}[H]
\SetAlgoLined
\Input{An order $\OO$ in a CM field $K$ of degree $2g$ with maximal totally real subfield $F$ satisfying $\OO\cap F = \OO_F$, and $\ZZ$-bases $\{\alpha_1,\dots, \alpha_g\}$, $\{\tau_1,\dots, \tau_{2g}\}$ and $\{\omega_1,\dots, \omega_{2g}\}$  for $\OO_F$, $\OO$ and $\OO_K$, respectively.}
\Output{Ideal $\ff^+\subseteq \OO_F$ identifying the order $\OO\subseteq \OO_K$ containing $\OO_F$.}

	  Define $b_{i,j,k}\in \QQ$ by
	$
		\alpha_i\omega_j = \sum_{k = 1}^{2g} b_{i,j,k} \tau_k
	$\\
	
	 Define the matrix
	$$
		M := \left( \begin{array}{c} M_1\\ \vdots\\ M_{2g}\end{array}\right)
	$$
	where
	$$
		M_j := 
			\left( \begin{array}{ccc} 
				b_{1,j,1} & \dots & b_{g,j,1}\\ 
				\vdots & \ddots & \vdots \\  
				b_{1,j,2g} & \dots & b_{g,j,2g}
			\end{array}\right)
	$$\\
	
	 Let $d$ be the greatest common divisor of all $d'\in \ZZ$ with $d' M \in \ZZ^{(2g)^2 \times g}$;\label{gcd-step}\\
	 
	 Let $H\in \ZZ^{g\times g}$ be the row Hermite normal form of $dM$;\label{HNF}\\
	 
	 $\ff^+ = \ZZ\beta_1 + \dots + \ZZ\beta_g$ where $(\beta_1,\dots, \beta_g) = (\alpha_1,\dots, \alpha_g)dH^{-1}.$\label{HNF-inv}

  \caption{\textsc{Compute the identifying ideal}}
  \label{compute-ideal}
\end{algorithm}
\end{center}

This algorithm is a modification of the algorithm of Kl\"uners and Pauli \cite[\S 6]{kp}, and it allows one to compute the identifying $\OO_F$-ideal of any order $\OO\subseteq \OO_K$ containing $\OO_F$, thereby making the correspondence of Theorem \ref{ideal} computable.

\begin{prop} 
Algorithm \ref{compute-ideal} is correct.
 \end{prop}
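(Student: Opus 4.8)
The plan is to read Algorithm~\ref{compute-ideal} as the computation of a dual lattice: Steps~\ref{gcd-step}--\ref{HNF} produce a $\ZZ$-basis of a lattice $\Lambda\subseteq\QQ^g$ whose dual is $\ff^+$ in coordinates, and Step~\ref{HNF-inv} converts a basis of $\Lambda$ into a basis of $\ff^+$. First I would reduce correctness to a lattice-membership statement. Since $1\in\OO_K$ and $\OO\cap F=\OO_F$, any $\alpha\in\ff\cap F$ lies in $\OO_F$, so $\ff^+=\{\alpha\in F:\alpha\OO_K\subseteq\OO\}$; and because the $\omega_j$ generate $\OO_K$ as a $\ZZ$-module, $\alpha\OO_K\subseteq\OO$ iff $\alpha\omega_j\in\OO$ for every $j$. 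Writing $\alpha=\sum_{i=1}^{g}x_i\alpha_i$ with $x_i\in\QQ$ and using $\alpha\omega_j=\sum_k\bigl(\sum_i x_i b_{i,j,k}\bigr)\tau_k$, the condition ``$\alpha\omega_j\in\OO$ for all $j$'' is exactly ``$\sum_i x_i b_{i,j,k}\in\ZZ$ for all $j,k$'', i.e.\ $Mx\in\ZZ^{(2g)^2}$ where $x=(x_1,\dots,x_g)^{T}$. Since $1$ is a $\ZZ$-combination of the $\omega_j$, this condition also forces $\alpha=\alpha\cdot1\in\OO$, hence $\alpha\in\OO_F$ and $x\in\ZZ^g$ automatically. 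Thus, under the coordinate isomorphism $\OO_F\cong\ZZ^g$ given by the basis $\alpha_1,\dots,\alpha_g$, the ideal $\ff^+$ corresponds precisely to $\Lambda^\vee:=\{x\in\QQ^g:Mx\in\ZZ^{(2g)^2}\}$.

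Next I would identify $\Lambda^\vee$ and track it through the algorithm. Let $\Lambda\subseteq\QQ^g$ be the $\ZZ$-span of the rows of $M$; the map $\alpha\mapsto(\alpha\omega_j)_j$ is injective (again since $1$ is a $\ZZ$-combination of the $\omega_j$), so $M$ has column rank $g$, $\Lambda$ is a full-rank lattice, and $\Lambda^\vee$ is its dual for the standard pairing. By definition $d$ is the least positive integer with $dM$ integral, so $d\Lambda$ is the $\ZZ$-span of the rows of $dM\in\ZZ^{(2g)^2\times g}$; the row Hermite normal form gives a unimodular $U$ with $U(dM)=\binom{H}{0}$, and since $dM$ has rank $g$ the block $H\in\ZZ^{g\times g}$ is nonsingular, so its rows are a $\ZZ$-basis of $d\Lambda$ and the rows of $\tfrac1d H$ are a $\ZZ$-basis of $\Lambda$. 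Finally, for any matrix $B$ whose rows form a basis of a full-rank lattice $\Lambda$, the columns of $B^{-1}$ form a basis of $\Lambda^\vee=\{x:Bx\in\ZZ^g\}$; with $B=\tfrac1d H$ we get $B^{-1}=dH^{-1}$, so the columns of $dH^{-1}$ form a $\ZZ$-basis of $\Lambda^\vee$ (necessarily integral, as $\Lambda^\vee\subseteq\ZZ^g$). Transporting these coordinate vectors back into $\OO_F$ along $\alpha_1,\dots,\alpha_g$ produces exactly the elements $\beta_1,\dots,\beta_g$ with $(\beta_1,\dots,\beta_g)=(\alpha_1,\dots,\alpha_g)dH^{-1}$, and $\ff^+=\ZZ\beta_1+\dots+\ZZ\beta_g$, which is the output; Theorem~\ref{ideal} then certifies that this ideal uniquely identifies $\OO$.

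The argument is essentially elementary $\ZZ$-linear algebra --- a variant of the Kl\"uners--Pauli computation --- so there is no single hard obstacle, but three points require care. The first is the automatic integrality of $x$ in the membership condition: without it one would only know that Algorithm~\ref{compute-ideal} outputs a lattice containing $\ff^+$, not $\ff^+$ itself. The second is verifying that $M$, and hence $dM$, has full column rank $g$, which is what makes $H$ invertible so that Step~\ref{HNF-inv} is even well defined. The third is bookkeeping the transpose in the passage to the dual lattice (rows of a basis matrix versus columns of its inverse), which is precisely where $H^{-1}$ enters; getting this backwards would replace $\ff^+$ by an unrelated lattice.
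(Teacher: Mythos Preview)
Your proposal is correct and follows essentially the same route as the paper: both reduce membership in $\ff^+$ to the condition $M\vec a\in\ZZ^{(2g)^2}$, observe that $M$ has full column rank, pass to the Hermite normal form of $dM$, and read off the basis $(\alpha_1,\dots,\alpha_g)dH^{-1}$. Your dual-lattice packaging of the last step is a cosmetic reformulation of the paper's chain of equivalences $M\vec a\in\ZZ^{(2g)^2}\iff H\vec a\in d\ZZ^g\iff \vec a\in dH^{-1}\ZZ^g$; the only substantive difference is that you derive the integrality $\vec a\in\ZZ^g$ directly from $1\in\sum\ZZ\omega_j$, whereas the paper invokes $\ff^+\subseteq\OO_F$ after the fact.
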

 \begin{proof} 
	Let $\beta = \sum_{i = 1}^g a_i\alpha_i$ be an element of $F$.  By definition, $\beta$ is in $\ff^+$ if and only if $\beta\OO_K \subseteq \OO$, i.e. if and only if 
	$$
		\beta\omega_j 
			= \sum_{i = 1}^g a_i\alpha_i\omega_j
			= \sum_{i = 1}^g a_i \sum_{k = 1}^{2g}b_{i,j,k} \tau_k 
			= \sum_{k = 1}^{2g} \left(\sum_{i = 1}^g a_i b_{i,j,k} \right)\tau_k\in \OO
	$$
for every $1\leq j\leq 2g$.  Equivalently, $\sum_{i = 1}^g a_ib_{i,j,k}\in \ZZ$ for all $1\leq j\leq 2g$ because $\tau_k$ is a $\ZZ$-basis for the order $\OO$.

Define $M_j$ as the ``multiplication by $\omega_j$'' matrix:
$$
		M_j 
		:= \left( \begin{array}{ccc} b_{1,j,1} & \dots & b_{g,j,1}\\ \vdots & \ddots & \vdots \\  b_{1,j,2g} & \dots & b_{g,j,2g}\end{array}\right)
$$
and put
	$$
		M := \left( \begin{array}{c} M_1\\ \vdots\\ M_{2g}\end{array}\right)
	$$
	Then, $\beta \in \ff^+$ if and only if $M\vec{a} \in \ZZ^{2g\times 2g}$ where $\vec{a} = (a_1,\dots, a_g)^{tr}$.  Note that $M$ has maximal rank $g$ because each $M_j$ is injective (i.e. multiplication by $\omega_j$ has no kernel).

Write $d$ for the greatest common divisor of the integers $d'$ such that $d' M$ has integer coefficients.  Let $H$ be the row Hermite Normal Form of $dM$.  View $H$ as a $g\times g$ matrix by removing all of the zero rows.  Because $M$ has maximal rank $g$, $H$ is invertible. Then
$$
	\beta\in \ff^+ 
		\iff M\vec{a} \in \ZZ^{(2g)^2}
		\iff dM\vec{a} \in d\ZZ^{(2g)^2}
		\iff H\vec{a}\in d\ZZ^{g}
		\iff \vec{a}\in dH^{-1}\ZZ^{g}
$$
Because $\ff^+$ is an integral ideal, each $a_i$ must be an integer.  We deduce that $dH^{-1}$ is an integer matrix whose columns are a basis for $\ff^+$, written in the basis $\{\alpha_1, \dots, \alpha_g\}$.  Thus  we can write a basis of $\ff^+$ as
$$
	(\beta_1, \dots, \beta_g) = (\alpha_1, \dots, \alpha_g) dH^{-1}.
$$

 \end{proof}
 
Before  analyzing the running time of Algorithm \ref{compute-ideal}, we must choose integral bases which have multiplication tables of small size. Given a basis $b_1,\dots, b_n$ for an order of discriminant $\Delta$ in a number field of degree $n$, the \emph{multiplication table} $(x_{i,j,k})$ is defined by $b_ib_j = \sum_{k = 1}^n x_{i,j,k}b_k$.  There is a polynomial time basis reduction algorithm that provides a basis whose multiplication table has size $O(n^4(2 + \log|\Delta|))$; see \cite[\S2.10]{lenstra-algorithms} or \cite[\S5]{buch-pre}.  We apply this to $\OO_F$ and $\OO_K$ to get reduced bases $\{\alpha_1,\dots, \alpha_g\}$ and $\{\omega_1,\dots, \omega_{2g}\}$, respectively. Any $\OO\subset \OO_K$ can be represented by a basis $\{a_1\omega_1,\dots, a_{2g}\omega_{2g}\}$ where $|a_i| \leq [\OO_K : \OO]$.

\begin{prop} 
Assuming that the bases for $\OO_K$, $\OO$ and $\OO_F$ are chosen as outlined above, Algorithm \ref{compute-ideal} has running time polynomial in $g$ and $\log|\disc(\OO)|$.
 \end{prop}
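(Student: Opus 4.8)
The plan is to verify that every integer and rational number produced during the execution of Algorithm~\ref{compute-ideal} has bit length polynomial in $g$ and $s:=\log|\disc(\OO)|$, and that each of the five lines is an instance of a standard polynomial-time procedure: rational linear algebra, greatest common divisors, and Hermite normal form. I would first fix the data representation suggested by the discussion preceding the proposition: express everything with respect to the reduced $\ZZ$-basis $\{\omega_j\}$ of $\OO_K$, whose multiplication table then has entries of bit length $O(g^{4}s)$. Writing $n=[\OO_K:\OO]$ and using $\disc(\OO)=n^{2}\disc(\OO_K)$ gives $\log n=O(s)$ and $\log|\disc(\OO_K)|=O(s)$, and the relation $|\disc(\OO_F)|^{2}\mid|\disc(\OO_K)|$ gives $\log|\disc(\OO_F)|=O(s)$. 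By standard reduction theory the $\omega$-coordinates of the reduced basis $\{\alpha_i\}$ of $\OO_F$, and the integers $a_k$ with $\tau_k=a_k\omega_k$, all have bit length $O(\mathrm{poly}(g,s))$ (for the latter using $|a_k|\le n$).

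Next I would walk through the steps. In Step~1 each product $\alpha_i\omega_j$ is formed in the $\omega$-basis via the multiplication table of $\OO_K$ and then rewritten in the $\tau$-basis; this costs $O(g^{3})$ arithmetic operations on integers of bit length $O(\mathrm{poly}(g,s))$, and since $n\,\OO_K\subseteq\OO$ the resulting $b_{i,j,k}$ have denominator dividing $n$. Hence in Step~\ref{gcd-step} the integer $d$ --- the least common multiple of these denominators --- divides $n$, so $\log d=O(s)$, and it is computed by $O(g^{3})$ gcd computations; then $dM$ is a $(2g)^{2}\times g$ integer matrix of bit length $O(\mathrm{poly}(g,s))$. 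In Step~\ref{HNF} I would apply a polynomial-time Hermite normal form algorithm, e.g.\ the one in \cite{lenstra-algorithms}, obtaining the nonsingular $g\times g$ matrix $H$; its diagonal entries are positive integers whose product is the index of the row lattice of $dM$ in $\ZZ^{g}$, which by Hadamard's inequality is at most the largest absolute value of a $g\times g$ minor of $dM$, so $\|H\|_\infty$ has bit length $O(\mathrm{poly}(g,s))$. Since $dH^{-1}=\tfrac{d}{\det H}\,\mathrm{adj}(H)$ is an integer matrix (established in the correctness proof) with each entry bounded in absolute value by $d$ times a $(g-1)\times(g-1)$ minor of $H$, it too has bit length $O(\mathrm{poly}(g,s))$, and it is obtained by one rational matrix inversion. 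Finally Step~\ref{HNF-inv} is the single product $(\alpha_1,\dots,\alpha_g)\,dH^{-1}$. Summing the cost of the five steps gives a running time polynomial in $g$ and $s$.

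The only part requiring genuine care, as opposed to routine bookkeeping, is the control of coefficient growth. Without the preliminary reductions of $\OO_F$ and $\OO_K$ the change-of-basis data entering Step~1 could have superpolynomial bit length, and a naive row reduction to Hermite normal form can exhibit exponential intermediate blow-up; the first issue is precisely what the basis-reduction preprocessing addresses, and the second is avoided by using one of the standard polynomial-time Hermite normal form algorithms. Once these two ingredients are in place, I expect the remainder to be a direct verification that each quantity above stays within the claimed size bound, together with the routine observation that arithmetic on polynomially many integers of polynomial bit length takes polynomial time.
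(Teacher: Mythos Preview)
Your proposal is correct and follows essentially the same strategy as the paper: bound all intermediate integers by polynomials in $g$ and $\log|\disc(\OO)|$ via the reduced-basis data and the discriminant relations $\disc(\OO)=[\OO_K:\OO]^2\disc(\OO_K)$ and $\disc(\OO_F)^2\mid\disc(\OO_K)$, then invoke polynomial-time gcd, Hermite normal form, and matrix inversion. The only organizational difference is in how the entries of $M$ are controlled: the paper bounds the coefficients $c_{i,j,k}$ (defined by $\alpha_i\omega_j=\sum_k c_{i,j,k}\omega_k$) directly, via the Minkowski embedding $K\hookrightarrow\RR^{2g}$ and Cramer's rule together with the reduced-basis norm bounds of \cite[Propositions~5.2--5.3]{buch-pre}, whereas you factor the same product through the multiplication table of $\OO_K$ and the $\omega$-coordinates of the $\alpha_i$. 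Your route is fine, but note that the step ``by standard reduction theory the $\omega$-coordinates of the $\alpha_i$ have polynomial bit length'' is itself justified by exactly the Cramer's-rule argument the paper spells out, so the two arguments ultimately rest on the same ingredient.
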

 
  \begin{proof} 
Consider the integers $c_{i,j,k}$ defined by
 \begin{equation}
 \label{mult-table}
 	\alpha_i\omega_j = \sum_{k = 1}^{2g} c_{i,j,k}\omega_k.
 \end{equation}
We see that $c_{i,j,k} = a_kb_{i,j,k}$ by the choice of bases.  Write $b_{i,j,k}$ as a rational number in reduced form:
$$
	b_{i,j,k} = \frac{c_{i,j,k}}{a_k} = \frac{c_{i,j,k}/ \text{gcd}(c_{i,j,k}, a_k)}{a_k/ \text{gcd}(c_{i,j,k}, a_k)}.
$$
 Hence the integer $d$ used in Step \ref{gcd-step} is
$$
	d = \text{lcm}\left\{\frac{a_k}{\gcd(a_k,c_{i,j,k})}\right\}.
$$

Algorithm \ref{compute-ideal} essentially consists of computing the integer $d$ in Step \ref{gcd-step}, the Hermite Normal Form $H$ of the matrix $dM$ in Step \ref{HNF}, and the inverse $H^{-1}$ in Step \ref{HNF-inv}. Each of these steps are polynomial time in the dimension and the size of the coefficients. We will modify \cite[Proposition 5.3]{buch-pre} to bound the size of $c_{i,j,k}$ by a polynomial in $g$, $\log|\disc(\OO_K)|$ and $\log|\disc(\OO_F)|$.  Because $[\OO_K : \OO]$, $\log|\disc(\OO_K)|$ and $\log|\disc(\OO_F)|$ are all bounded by $\log |\disc(\OO)|$, this is sufficient for bounding the running time of the algorithm.

 Recall that there are $2g$ conjugate pairs of complex embeddings of $K$, which we denote 
 $
 	\sigma_1,\dots, \sigma_g,\overline\sigma_1, \dots,\overline\sigma_g
$ 
This provides an embedding $K \to \RR^{2g}$ via 
$$
	\alpha\mapsto 
		\underline \alpha :=
		(\text{Re}(\sigma_1(\alpha)), \text{Im}(\sigma_1(\alpha)), \dots, \text{Re}(\sigma_g(\alpha)), \text{Im}(\sigma_g(\alpha))).
$$
Embedding the equation \eqref{mult-table} gives the linear equation  
$$
	\underline{\alpha_i\omega_j} = \sum_{k = 1}^{2g} c_{i,j,k} \underline{\omega_k}.
$$
The matrix $(\underline\omega_k)_k$ has determinant $2^{-g}{ |\disc(\OO_K)|}^{1/2}$ by definition.  Hence by Cramer's rule
$$
	|c_{i,j,k}| 
		\leq \frac{\| \underline{\alpha_i\omega_j}\| \cdot \prod_{r\neq k} \| \underline{\omega_r}\| }{{ 2^{-g}|\disc(\OO_K)|}^{1/2}}
$$
Because the bases for $\OO_K$ and $\OO_F$ are reduced, the norms of the vectors appearing in the numerator are bounded.  Specifically, $\log\|\underline\omega_k\|$ and $\log\|\underline\alpha_i\|$ are bounded by a polynomial in $g$, $\log|\disc(\OO_K)|$ and $\log|\disc(\OO_F)|$; see \cite[Proposition 5.2]{buch-pre}.  We conclude that the sizes of the integers $c_{i,j,k}$ are bounded as desired.
  \end{proof}
  
For the purposes of this paper, we want bound the running time in terms of the size of the finite field $\FF_q$. This is achieved in the following corollary. Even though we will consider $g$ to be a constant in the remainder of the paper, we note that the algorithm is also polynomial in $g$.
  \begin{corollary} 
  \label{ideal-alg}
  Maintain the notation of the previous proposition, but additionally assume $K = \QQ(\pi)$ where $\pi$ is the Frobenius of a simple ordinary abelian variety $A$ of dimension $g$ over $\FF_q$.  If $\OO$ contains $\pi$, then Algorithm \ref{compute-ideal} computes the ideal $\ff^+$ identifying $\OO$ in time polynomial in $g$ and $\log q$.
   \end{corollary}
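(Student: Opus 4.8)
The plan is to reduce the corollary to the preceding proposition, which already bounds the running time of Algorithm \ref{compute-ideal} by a polynomial in $g$ and $\log|\disc(\OO)|$, provided the bases are chosen as described there. So it suffices to show that $\log|\disc(\OO)|$ is itself bounded by a polynomial in $g$ and $\log q$.

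First I would use the containments $\ZZ[\pi]\subseteq \OO\subseteq \OO_K$, which hold because $\OO$ contains $\pi$ by hypothesis (and by Waterhouse, as recalled above). Comparing discriminants, $\disc(\OO) = \disc(\OO_K)\cdot [\OO_K:\OO]^2$ and $[\OO_K:\OO]$ divides $[\OO_K:\ZZ[\pi]]$, so $|\disc(\OO)| \le |\disc(\ZZ[\pi])| = |\disc(f_\pi)|$, the discriminant of the characteristic polynomial of Frobenius (which equals the discriminant of $\ZZ[\pi]$ since $\QQ(\pi)$ has degree $2g$, forcing $f_\pi$ to be irreducible). Next I would bound $|\disc(f_\pi)|$ using the Weil bound: $f_\pi$ is monic of degree $2g$ and all of its complex roots $\pi_1,\dots,\pi_{2g}$ have absolute value $\sqrt q$, so $|\disc(f_\pi)| = \prod_{i<j}|\pi_i-\pi_j|^2 \le (2\sqrt q)^{2g(2g-1)}$, whence $\log|\disc(\OO)| \le \log|\disc(f_\pi)| = O(g^2\log q)$. (Equivalently, the coefficients of $f_\pi$ are bounded by $\binom{2g}{k}q^{k/2}$, which again bounds $\log|\disc(f_\pi)|$ polynomially.) Feeding this estimate into the previous proposition yields the claimed running time.

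The argument is short and I do not anticipate a serious obstacle; the only points requiring care are bookkeeping. One must check that the input data really meets the hypotheses of the previous proposition — the reduced bases of $\OO_F$ and $\OO_K$ with small multiplication tables, together with the representation of $\OO$ by $\{a_1\omega_1,\dots,a_{2g}\omega_{2g}\}$ with $|a_i|\le [\OO_K:\OO]\le [\OO_K:\ZZ[\pi]]$ — and that all of this has size polynomial in $g$ and $\log q$, which again follows from the discriminant bound above. One may also remark that $f_\pi$ itself, hence all of this data, can be computed in time polynomial in $\log q$ via Pila's algorithm, as noted in the Notation subsection.
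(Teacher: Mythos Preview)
Your proposal is correct and follows essentially the same approach as the paper: reduce to the preceding proposition, use $\ZZ[\pi]\subseteq\OO$ to get $|\disc(\OO)|\le|\disc(\ZZ[\pi])|$, and bound the latter by $(2\sqrt q)^{2g(2g-1)}$ via the Weil bound on the roots of $f_\pi$. Your write-up is in fact a bit more detailed than the paper's (explicit discriminant formula, remarks on the input bases and on Pila's algorithm), but the argument is the same.
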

   \begin{proof} 
  Since the algorithm is proven to be polynomial in $g$ and $\log|\disc(\OO)|$, we simply need to notice that $\OO \supseteq \ZZ[\pi]$ implies
$$
	|\disc(\OO)| 
		\leq |\disc(\ZZ[\pi])|
		\leq (2\sqrt{q})^{2g(2g-1)}.
$$
The second part of the inequality follows immediately from the definition of discriminant because each root of the characteristic polynomial of $\pi$ has absolute value $\sqrt{q}$.
Upon taking logarithms, we obtain our result.
   \end{proof}
 
For convenient notation, denote the ideal identifying an order $\OO\supseteq \OO_F$ by $\ff^+(\OO)$, and denote the order identified by $\ff^+$ as $\OO(\ff^+)$.  Denote by $\ff^+(A)$ the ideal identifying the order isomorphic to $\End A$ under the fixed embedding $\End A \hookrightarrow K$.
Now computing $\End A$ is equivalent to computing $\ff^+(A)$, which is a divisor of  $\ff^+(\OO_F[\pi])$.  Hence we consider the factors of $\ff^+(\OO_F[\pi])$ and determine which prime powers divide $\ff^+(A)$. We will present different methods for dealing with small and large prime factors.

 \subsection{Class group action}
 Following Brooks, Jetchev and Wesolowski \cite{bjw}, we will focus on $\frakell$-isogenies, defined below.  Assume that $A$ has maximal RM and write $\vv$ for the ideal $\ff^+(\OO_F[\pi])$.  Given $\alpha\in \End A$, we write $A[\alpha]$ for the kernel of $\alpha$.
 
 \begin{definition}[$\frakell$-isogeny]
Let $\frakell\subseteq \OO_F$ be a prime ideal coprime to $\ff^+(A)$.  An $\frakell$-isogeny from A is an isogeny whose kernel is a proper subgroup of $A[\frakell] = \cap_{\alpha\in \frakell} A[\alpha]$ which is stable under the action of $\OO_F$.  
  \end{definition}
  
The following lemma was stated without proof in the definition of $\frakell$-isogeny in \cite{bjw}.  We record a proof here for completeness.\footnote{The author thanks Benjamin Wesolowski for explaining this proof in private communication.}
  
  \begin{lemma} 
  \label{deg-frak-iso}
 Using the notation above, the degree of  an $\frakell$-isogeny is the norm $N_{F/\QQ}(\frakell)$.
   \end{lemma}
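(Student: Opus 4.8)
The plan is to reduce the statement to a count of the geometric points of the kernel. For any isogeny $\phi$ of abelian varieties over $\FF_q$ one has $\deg\phi=\#\ker\phi$, so it suffices to show that the kernel of an $\frakell$-isogeny $\phi$ from $A$ has $N_{F/\QQ}(\frakell)$ points. By definition $\ker\phi$ is a proper $\OO_F$-stable subgroup of $A[\frakell]$, and since $\frakell$ annihilates $A[\frakell]$ the action of $\OO_F$ on $A[\frakell]$ factors through the residue field $\OO_F/\frakell$; thus an $\OO_F$-stable subgroup of $A[\frakell]$ is exactly an $\OO_F/\frakell$-subspace. Everything therefore comes down to the structural fact that $A[\frakell]$ is a free $\OO_F/\frakell$-module of rank $2$: granting this, the proper $\OO_F/\frakell$-subspaces of $A[\frakell]$ are $\{0\}$ and the lines, and since $\ker\phi=\{0\}$ would make $\phi$ an isomorphism, a (non-trivial) $\frakell$-isogeny has a line as its kernel, of cardinality $\#(\OO_F/\frakell)=N_{F/\QQ}(\frakell)$.

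To prove $A[\frakell]\cong(\OO_F/\frakell)^2$, I would pass to the $\ell$-adic Tate module, where $\ell$ is the rational prime under $\frakell$; in the cases relevant to us $\ell\neq p$, so this torsion is \'etale, and if $\ell=p$ the same argument runs with the Dieudonn\'e lattice. The key input is that $V_\ell A=T_\ell A\otimes\QQ_\ell$ is free of rank $1$ over $K\otimes\QQ_\ell$. Indeed $K\otimes\QQ_\ell$ is an \'etale $\QQ_\ell$-algebra, that is, a finite product of field extensions of $\QQ_\ell$, of $\QQ_\ell$-dimension $[K:\QQ]=2g=\dim_{\QQ_\ell}V_\ell A$, and it acts faithfully on $V_\ell A$ — faithfulness being the standard fact that $\End A\otimes\ZZ_\ell$ acts faithfully on the Tate module $T_\ell A$ — and a faithful module over a finite product of fields whose $\QQ_\ell$-dimension equals that of the algebra is free of rank $1$. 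Consequently $V_\ell A$ is free of rank $[K:F]=2$ over the subalgebra $F\otimes\QQ_\ell$. Decomposing $\OO_F\otimes\ZZ_\ell$ as a product $\prod_{\qq\mid\ell}\OO_{F,\qq}$ of complete discrete valuation rings, the $\frakell$-component $T_\frakell A$ of $T_\ell A$ is a full $\OO_{F,\frakell}$-lattice in the $2$-dimensional $F_\frakell$-vector space $V_\ell A\otimes_{F\otimes\QQ_\ell}F_\frakell$, hence free of rank $2$ over $\OO_{F,\frakell}$. Finally, $A[\frakell]$ is an $\ell$-group on which every prime $\qq\neq\frakell$ above $\ell$ acts invertibly, so it equals the $\frakell$-torsion of $V_\ell A/T_\ell A$, which after localization at $\frakell$ is $\varpi^{-1}T_\frakell A/T_\frakell A\cong T_\frakell A/\varpi T_\frakell A\cong(\OO_F/\frakell)^2$, where $\varpi$ is a uniformizer of $\OO_{F,\frakell}$.

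Combining these gives $\deg\phi=\#\ker\phi=N_{F/\QQ}(\frakell)$. The step I expect to be the main obstacle is the rank-exactly-$2$ claim for $T_\frakell A$: when $\ell$ ramifies in $F$ or splits into several primes, the local ranks could a priori differ while still summing to $2g$, and it is precisely the absolute simplicity of $A$ together with $K$ having the maximal possible degree $2g=2\dim A$ — leaving no slack in the dimension count above — that pins each local rank to $2$. An alternative that uses the hypothesis $\frakell\nmid\ff^+(A)$ more directly is Deligne's equivalence for ordinary abelian varieties over $\FF_q$: by Theorem \ref{ideal} the order $\End A=\OO_F+\ff^+(A)\,\OO_K$ agrees with $\OO_K$ after localization at $\frakell$, so the invertible $\End A$-module that Deligne attaches to $A$ reduces modulo $\frakell$ to $\OO_K/\frakell\OO_K\cong(\OO_F/\frakell)^2$ (the latter isomorphism since $\OO_K$ is locally free of rank $2$ over $\OO_F$), again giving the structural fact.
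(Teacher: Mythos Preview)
Your argument follows the same line as the paper's: show that $A[\frakell]$ is a $2$-dimensional vector space over the residue field $\OO_F/\frakell$, so that its proper $\OO_F$-stable subgroups are exactly the lines, each of cardinality $N_{F/\QQ}(\frakell)$. The only real difference is how the dimension-$2$ fact is obtained: the paper simply invokes the cardinality formula $\#A[\frakell]=N_{K/\QQ}(\frakell)=N_{F/\QQ}(\frakell)^2$ from Waterhouse \cite[Theorem 3.15]{waterhouse}, whereas you supply a self-contained Tate-module argument (and an alternative via Deligne's equivalence). Your route is longer but more transparent about \emph{why} the local rank is exactly $2$; the paper's citation is quicker.

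One small slip to fix: your opening assertion that $\deg\phi=\#\ker\phi$ for \emph{any} isogeny over $\FF_q$ is false as stated --- it fails for inseparable isogenies such as Frobenius. The paper is careful about this, closing with ``for separable isogenies, this is the same as the degree of the isogeny.'' In your setting the issue is harmless once you observe (as you do later) that $\ell\neq p$ makes $A[\frakell]$ \'etale and hence the quotient separable; just qualify the first sentence accordingly.
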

    \begin{proof}  
    Recall that $\#A[\frakell] = N_{K/\QQ}(\frakell) = N_{F/\QQ}(\frakell)^2$.  A proof of this equality when $\End A$ is maximal is given in \cite[Theorem 3.15]{waterhouse} and the proof generalizes immediately.
    The action of $\OO_F$ on $A[\frakell]$ induces an action of the field $\OO_F/\frakell$.  In particular, $A[\frakell]$ is an $\OO_F/\frakell$-vector space of dimension 2. Therefore proper $\OO_F$-stable subgroups are precisely the $1$-dimensional $\OO_F/\frakell$-subspaces of $A[\frakell]$.  Thus, the size of the kernel of an $\frakell$-isogeny is $\#(\OO_F/\frakell) = N(\frakell)$. For separable isogenies, this is the same as the degree of the isogeny.
    \end{proof}
 
Notice that the definition of an $\frakell$-isogeny generalizes the definition of an $\ell$-isogeny for elliptic curves.  This definition is useful because of the following theorems about $\frakell$-isogeny graphs, which mirror the foundational theorems used in Bisson and Sutherland's original algorithm for elliptic curves \cite[\S2.1]{bisson-sutherland}.  Write $\OO(\ff^+)$ for the order of $K$ defined by the ideal $\ff^+\subseteq \OO_F$ and $\cl(\ff^+)$ for the class group of $\OO(\ff^+)$.    Denote by $\textnormal{Ab}_{\pi, \ff^+}$ the set of abelian varieties defined over $\FF_q$ in the isogeny class defined by $\pi$ whose endomorphism ring has identifying ideal $\ff^+$.  Given an ideal $\frakell\subseteq \OO_F$, define the symbol $(K/\frakell)$ to be $1, 0$, or $-1$ when $\frakell$ is split, ramified, or inert in $K$, respectively.

 \begin{theorem}
 \label{action}
  There is a faithful action of $\cl(\ff^+)$ on $\textnormal{Ab}_{\pi, \ff^+}$, where an ideal lying over a prime $\frakell\subseteq \OO_F$ acts by an $\frakell$-isogeny.
  \end{theorem}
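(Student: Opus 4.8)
The plan is to deduce Theorem~\ref{action} from Deligne's equivalence of categories, which reduces the statement to elementary ideal theory of the (possibly non-maximal) order $\OO := \OO(\ff^+)$. I assume throughout that $\ff^+ \mid \vv$, so that the varieties in $\textnormal{Ab}_{\pi,\ff^+}$ have maximal RM; otherwise $\textnormal{Ab}_{\pi,\ff^+}$ is empty and there is nothing to prove. First I would translate $\textnormal{Ab}_{\pi,\ff^+}$ into module-theoretic terms. After fixing a place of $\overline{\QQ}$ above $p$, Deligne's equivalence identifies an ordinary abelian variety $A'/\FF_q$ isogenous to $A$ with a fractional $\ZZ[\pi,\overline\pi]$-ideal $T \subseteq K$ on which the Frobenius acts as multiplication by $\pi$ (and the Verschiebung as multiplication by $\overline\pi = q/\pi$), with $\End{A'} \cong (T:T) := \{x \in K : xT \subseteq T\}$, and two such ideals give isomorphic abelian varieties exactly when they are homothetic. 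Since $\ff^+ \mid \vv$ forces $\OO \supseteq \OO_F[\pi] \supseteq \ZZ[\pi,\overline\pi]$ by Theorem~\ref{ideal}, this yields a bijection between $\textnormal{Ab}_{\pi,\ff^+}$ and the set of homothety classes of fractional $\OO$-ideals $\aa$ with $(\aa:\aa) = \OO$; by Waterhouse \cite[Theorem 7.4]{waterhouse} this set is nonempty, since it contains the class of $\OO$ itself.

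With this dictionary in place, I would define the action of $\cl(\ff^+) = \cl(\OO)$ by $[\bb] \cdot A_\aa := A_{\bb^{-1}\aa}$ for $\bb$ an invertible fractional $\OO$-ideal. The checks are routine: invertibility of $\bb$ gives $(\bb^{-1}\aa : \bb^{-1}\aa) = (\aa:\aa) = \OO$, so $\bb^{-1}\aa$ again represents an element of $\textnormal{Ab}_{\pi,\ff^+}$; its homothety class depends only on $[\bb] \in \cl(\OO)$ and the class of $\aa$; the Frobenius still acts by $\pi$; and $[\OO]\cdot A_\aa = A_\aa$ together with $[\bb_1][\bb_2]\cdot A_\aa = [\bb_1]\cdot([\bb_2]\cdot A_\aa)$ follows from associativity of ideal multiplication. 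For faithfulness I would prove the stronger statement that the action is free. If $[\bb]$ fixes $A_\aa$, then $\bb^{-1}\aa = \lambda\aa$ for some $\lambda \in K^\times$; after replacing $\bb$ by $\lambda\bb$ we may assume $\bb^{-1}\aa = \aa$, and localizing at each prime $\pp$ of $\OO$ the invertible ideal $\bb_\pp$ is principal, say $\bb_\pp = c\OO_\pp$, so $c^{-1}\aa_\pp = \aa_\pp$ forces $c, c^{-1} \in (\aa_\pp:\aa_\pp) = \OO_\pp$, i.e. $c \in \OO_\pp^\times$ and $\bb_\pp = \OO_\pp$. As this holds for all $\pp$, $\bb = \OO$ is principal, so the action is free, hence faithful since $\textnormal{Ab}_{\pi,\ff^+} \neq \emptyset$.

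It remains to identify the action of prime ideals with $\frakell$-isogenies. Let $\frakell \subseteq \OO_F$ be a prime coprime to $\ff^+$ with $(K/\frakell) \geq 0$ (i.e. split or ramified in $K$), and let $\LL \subseteq \OO_K$ be a prime above $\frakell$, so $N_{K/\QQ}(\LL) = N_{F/\QQ}(\frakell)$. Since $\frakell$ is coprime to the conductor, so is $\LL$, and hence $\LL_\OO := \LL \cap \OO$ and $\frakell\OO$ are invertible $\OO$-ideals with $\frakell\OO \subseteq \LL_\OO$ and $\#(\LL_\OO^{-1}\aa/\aa) = \#(\OO/\LL_\OO) = N_{F/\QQ}(\frakell)$. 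Under Deligne's equivalence an inclusion of Deligne modules with finite cokernel corresponds to an isogeny whose kernel is that cokernel, so $\aa \hookrightarrow \LL_\OO^{-1}\aa$ induces an isogeny $A_\aa \to [\LL_\OO]\cdot A_\aa$ with kernel $\LL_\OO^{-1}\aa/\aa$. This kernel is $\OO_F$-stable because it is a quotient of $\OO$-modules and $\OO_F \subseteq \OO$; it is contained in $A_\aa[\frakell] \cong (\frakell\OO)^{-1}\aa/\aa$ because $\frakell\OO \subseteq \LL_\OO$; and it is a proper subgroup there because $\#A_\aa[\frakell] = N_{F/\QQ}(\frakell)^2 > N_{F/\QQ}(\frakell)$. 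Hence this isogeny is an $\frakell$-isogeny, of degree $N_{F/\QQ}(\frakell)$ in agreement with Lemma~\ref{deg-frak-iso}.

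The only non-formal inputs are Deligne's equivalence and the standard identifications of $\End{A'}$ with $(T:T)$ and of the $\frakell$-torsion $A_\aa[\frakell]$ with $(\frakell\OO)^{-1}\aa/\aa$; granting these, everything is ideal arithmetic in $\OO$. I expect the freeness argument to need the most care: because $\aa$ is not assumed invertible, the clean way to run it is to localize, using that the multiplicator ring commutes with localization and that an invertible ideal over a local ring is principal. One should also check that primes coprime to $\ff^+$ stay invertible when contracted from $\OO_K$ to $\OO$, and that $\#(\OO/\LL_\OO) = N_{K/\QQ}(\LL)$, which is precisely where coprimality to the conductor enters.
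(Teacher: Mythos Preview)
Your proof is correct and essentially self-contained, whereas the paper's own proof consists entirely of citations: it invokes the classical construction of the class group action from Shimura--Taniyama and Waterhouse, and refers to \cite[Theorem~4.3]{bjw} for the identification of the action of a prime above $\frakell$ with an $\frakell$-isogeny. Your route via Deligne's equivalence is the modern repackaging of what underlies Waterhouse's argument, so the mathematical substance is not fundamentally different; what you gain is transparency and an explicit freeness proof (stronger than the faithfulness actually asserted), carried out cleanly by localization since $\aa$ need not be invertible. Your restriction to $(K/\frakell) \geq 0$ in the last paragraph is the correct reading of the statement, since for inert $\frakell$ there are no $\frakell$-isogenies at all (cf.\ Theorem~\ref{isograph}). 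The one step you flag as needing care---that $\#(\LL_\OO^{-1}\aa/\aa) = \#(\OO/\LL_\OO)$ even though $\aa$ is not invertible---goes through exactly as you outline: localize, note that at primes coprime to the conductor $\OO_\pp$ is a DVR so $\aa_\pp$ is free of rank one, and at primes dividing the conductor $\LL_\OO$ is already the unit ideal.
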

   \begin{proof} 
  The action of the class group is a classical result \cite{shimura-taniyama, waterhouse}. The fact that these isogenies are $\frakell$-isogenies is clear from the definition, as pointed out in \cite[Theorem 4.3]{bjw}.
   \end{proof}

 \begin{theorem}
 \label{isograph}
 Let $\frakell$ be a prime of $\OO_F$ which does not divide $\vv = \ff^+(\OO_F[\pi])$, and let $A\in \textnormal{Ab}_{\pi, \ff}$.  Then there are exactly $1 + (K/\frakell)$ $\frakell$-isogenies starting from $A$ which lead to varieties with endomorphism ring isomorphic to $\OO(\ff)$, and these are the only varieties defined over $\FF_q$ which are $\frakell$-isogenous to $A$.
  \end{theorem}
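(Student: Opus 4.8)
The plan is to bound the number of $\FF_q$-rational $\frakell$-isogenies out of $A$ from above via the Frobenius action on the torsion group $A[\frakell]$ and from below via the class group action of Theorem~\ref{action}, so that the two bounds both equal $1+(K/\frakell)$ and therefore force every $\FF_q$-rational variety $\frakell$-isogenous to $A$ to lie in $\textnormal{Ab}_{\pi,\ff}$. The first thing to pin down is the arithmetic of $\frakell$. Since $A\in\textnormal{Ab}_{\pi,\ff}$, its endomorphism ring $\OO(\ff)$ contains $\OO_F[\pi]=\OO_F[\pi,\overline\pi]$, so by monotonicity of conductor ideals $\ff=\ff^+(\OO(\ff))$ divides $\vv=\ff^+(\OO_F[\pi])$; hence $\frakell\nmid\ff$ and $\frakell$ is coprime to the conductor $\ff\OO_K$ of $\OO(\ff)$. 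Combining $\frakell\nmid\vv$ with the equality $\OO_F[\pi]=\OO_F+\vv\OO_K$ of Theorem~\ref{ideal} and localizing at $\frakell$, one sees that $\OO_F[\pi]$ is maximal above $\frakell$; reducing modulo $\frakell$ then gives $\OO_K/\frakell\OO_K\cong(\OO_F/\frakell)[x]/(g_\frakell(x))$, where $g_\frakell$ is the reduction modulo $\frakell$ of the relative minimal polynomial $x^2-(\pi+\overline\pi)x+q\in\OO_F[x]$ of $\pi$ over $F$ (here $\pi\overline\pi=q$). By the Dedekind--Kummer criterion, applicable at $\frakell$ since $\OO_F[\pi]$ is maximal there, the prime $\frakell$ is split, ramified, or inert in $K$ --- that is $(K/\frakell)=1,0,-1$ --- exactly when $g_\frakell$ is a product of two distinct linear factors, a square of a linear factor, or irreducible over $\OO_F/\frakell$.

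For the upper bound, I would argue as in the proof of Lemma~\ref{deg-frak-iso}: $A[\frakell]$ is a $2$-dimensional $\OO_F/\frakell$-vector space whose proper $\OO_F$-stable subgroups are precisely its $1$-dimensional subspaces, and such a subspace is the kernel of an isogeny over $\FF_q$ if and only if it is stable under $\pi$, which acts $\OO_F/\frakell$-linearly because it commutes with $\OO_F$. The relation $\pi^2-(\pi+\overline\pi)\pi+q=0$ shows the minimal polynomial of $\pi$ on $A[\frakell]$ divides $g_\frakell$; since $\OO_F[\pi]$ is maximal above $\frakell$ the element $\pi$ acts non-scalarly on $A[\frakell]$, so this minimal polynomial is exactly $g_\frakell$, and a short linear algebra computation then shows that the number of $\pi$-stable lines in $A[\frakell]$ is $2,1$, or $0$ according as $g_\frakell$ splits with distinct roots, has a double root, or is irreducible --- i.e. exactly $1+(K/\frakell)$. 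Hence $A$ has exactly $1+(K/\frakell)$ $\frakell$-isogenies defined over $\FF_q$, so at most that many $\FF_q$-rational $\frakell$-neighbours.

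For the lower bound, I would use the class group. Since $\frakell$ is coprime to the conductor of $\OO(\ff)$, the primes of $\OO(\ff)$ over $\frakell$ are in bijection with those of $\OO_K$ over $\frakell$; let $\frakp_1,\dots,\frakp_r$ be those of residue degree $1$ over $\OO_F$, so $r=1+(K/\frakell)$ by the first paragraph. By Theorem~\ref{action} each $\frakp_i$ acts on $A$ by an isogeny $A\to\frakp_i*A$ with target in $\textnormal{Ab}_{\pi,\ff}$; its kernel $A[\frakp_i]$ is contained in $A[\frakell]$ (since $\frakell\subseteq\frakp_i$), is $\OO_F$-stable, and has order $\#(\OO(\ff)/\frakp_i)=N_{F/\QQ}(\frakell)$, hence is a proper $\OO_F$-stable subgroup of $A[\frakell]$ (which has order $N_{F/\QQ}(\frakell)^2$ by Lemma~\ref{deg-frak-iso}), so the isogeny is an $\frakell$-isogeny. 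Distinct $\frakp_i$ are comaximal in $\OO(\ff)$, so $A[\frakp_i]\cap A[\frakp_j]=A[\frakp_i+\frakp_j]=0$ for $i\neq j$; since each $A[\frakp_i]$ is nonzero, the kernels are pairwise distinct, and these are $1+(K/\frakell)$ distinct $\frakell$-isogenies over $\FF_q$ onto varieties with endomorphism ring isomorphic to $\OO(\ff)$.

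Putting the bounds together: $A$ has exactly $1+(K/\frakell)$ $\frakell$-isogenies over $\FF_q$ to varieties with endomorphism ring isomorphic to $\OO(\ff)$, and since the second paragraph shows this already exhausts all of the $\FF_q$-rational $\frakell$-isogenies out of $A$, every $\FF_q$-rational variety $\frakell$-isogenous to $A$ has endomorphism ring isomorphic to $\OO(\ff)$. The main obstacle is the first paragraph: one must carefully justify that $\frakell\nmid\vv$ makes $\OO_F[\pi]$ locally maximal and $\pi$ non-scalar modulo $\frakell$, and then thread together the factorization of $g_\frakell$, the Dedekind--Kummer description of $\frakell\OO_K$, and the $\OO_F/\frakell$-module structure of $A[\frakell]$. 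Much of this is implicit in the $\frakell$-volcano analysis of Brooks--Jetchev--Wesolowski \cite{bjw}, and one could alternatively deduce the statement directly from their results.
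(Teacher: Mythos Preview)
Your argument is correct, but the paper itself does not give a direct proof: it simply invokes Theorem~4.3 and Proposition~4.10 of \cite{bjw} and stops there. What you have written is essentially an unpacking of those results in the special situation $\frakell\nmid\vv$, carried out from first principles. Your upper bound via the $\OO_F/\frakell$-linear action of $\pi$ on the $2$-dimensional space $A[\frakell]$, together with the Dedekind--Kummer link between the factorization of $g_\frakell$ and the symbol $(K/\frakell)$, is exactly the mechanism behind the horizontal-edge count in \cite{bjw}; your lower bound via the action of the primes of $\OO(\ff)$ above $\frakell$ is their Theorem~4.3 specialized to the case where $\frakell$ is coprime to $\vv$ so that no ascending or descending edges occur. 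The one point you correctly flag as delicate --- that $\pi$ is non-scalar on $A[\frakell]$, equivalently that $A[\frakell]$ is free of rank one over $\OO_K/\frakell\OO_K$ --- is indeed the crux, and your localization argument (maximality of $\OO_F[\pi]$ at $\frakell$, combined with the cardinality comparison $\#A[\frakell]=N_{F/\QQ}(\frakell)^2=\#(\OO_K/\frakell\OO_K)$ and, in the ramified case, the strict containment $A[\LL]\subsetneq A[\frakell]$) is enough to settle it. The trade-off is clear: the paper's one-line citation keeps the exposition short and leans on an already-published structural theorem, while your approach is self-contained and makes transparent why the count $1+(K/\frakell)$ arises, at the cost of reproving material that \cite{bjw} already contains.
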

   \begin{proof} 
   This follows immediately from Theorem 4.3 and Proposition 4.10 in \cite{bjw}.
   \end{proof}

 \subsection{Computing Isogenies}
 Given an ideal $\aa\subseteq \End A$, we must determine the isogeny $\phi_\aa$ which corresponds to the action of $\aa$.  One option would be to simply compute all possible isogenies, in the style of \cite{bisson-sutherland}.  However, we do not have the benefit of an easily calculable modular polynomial when the dimension of $A$ is $g > 1$. Instead, we compute the target of an isogeny from its kernel.   We can determine the $\ker\phi_\aa = \cap_{a\in \aa}A[a]$ in the same way as Bisson \cite{bisson}.  If $\LL\subseteq \OO$ is a prime ideal lying over $\ell$ which does not divide $[\OO_K : \ZZ[\pi]]$, then we can write $\LL = \ell\OO + r(\pi)\OO$ where $r(x)$ is a factor modulo $\ell$ of the characteristic polynomial $f_\pi$ of Frobenius.
   Then the kernel of $\phi_\LL$ is simply the kernel of the isogeny $r(\pi)$ restricted to $A[\ell]$.
 
 After obtaining the subgroup corresponding to the given ideal, we must compute the target variety.  This difficulty will be one of the main bottlenecks of the algorithm.  We will only consider computing isogenies for the case $g = 2$.  Consider a prime $\frakell\subseteq \OO_F$ lying over $\ell$, and let $\phi$ be an $\frakell$-isogeny whose kernel is known. There are two cases, depending on the norm of the ideal $\frakell$. If  $N\frakell = \ell^2$, then $\phi$ is commonly called an $(\ell,\ell)$-\emph{isogeny}, and the target variety can be determined by the algorithm in \cite{cr}.  If $N\frakell = \ell$, then $\phi$ is known as a \emph{cyclic isogeny} and  the target variety can be determined by the algorithm in \cite[Chapter 4]{dudeanu} and \cite{ djrv}.  In both cases, the algorithm is polynomial in $\ell$ and $\log q$.
By an algorithm of Mestre \cite{mestre}, we assume that the output of these algorithms is the Jacobian of a curve defined over the minimal field; see also \cite{cr}. By combining these algorithms and analyzing the running times of each algorithm, we find the following theorem.

\begin{theorem}
\label{iso-compute} 
Let $A$ be an ordinary, principally polarized abelian variety defined over $\FF_q$ of dimension $g = 2$ with maximal RM.  Write $K = \QQ(\pi)$, where $\pi$ is Frobenius, and let $F$ be the maximal totally real subfield of $K$.    Let $\frakell\subseteq \OO_F$ be a prime lying over $\ell$ and assume the following:
\begin{enumerate}
	\item $\frakell = \beta\OO_F$ for a totally positive element $\beta$.
	\item The index $[\OO_F : \ZZ[\pi+\overline\pi]]$ is not divisible by $2\ell$.
\end{enumerate}¥
If $\ell$ is bounded by $L[1/2,c_0](q)$ for some constant $c_0> 0$, then an $\frakell$-isogeny with a given kernel can be found in time
$
	L[1/2, dc_0](q)
$
where $d > 0$ is a constant.
 \end{theorem}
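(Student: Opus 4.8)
The plan is to reduce to the two isogeny-computing subroutines already available — the $(\ell,\ell)$-isogeny algorithm of \cite{cr} and the cyclic-isogeny algorithm of \cite{dudeanu, djrv} — and then to convert their polynomial running times into the claimed subexponential bound. Since the kernel $G := \ker\phi$ is supplied as input, the task is only to produce a model of $A/G$ (together with $\phi$ if desired) and to normalise the output. By Lemma \ref{deg-frak-iso}, $\deg\phi = N_{F/\QQ}(\frakell)$, and since $\frakell$ is a prime of the quadratic field $F$ this equals either $\ell$, when $\ell$ splits or ramifies in $F$, or $\ell^2$, when $\ell$ is inert; I would treat these two cases separately.

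Suppose first that $N\frakell = \ell^2$, so $\OO_F/\frakell \cong \FF_{\ell^2}$ and, exactly as in the proof of Lemma \ref{deg-frak-iso}, $G$ is a one-dimensional $\OO_F/\frakell$-subspace of $A[\frakell] = A[\ell]$, hence $(\ZZ/\ell\ZZ)^2$ as an abelian group; thus $\phi$ is an $(\ell,\ell)$-isogeny and I would feed $G$ to the algorithm of \cite{cr}. Before doing so I must check that $G$ is an admissible input, namely a maximal isotropic subgroup of $A[\ell]$ for the $\ell$-Weil pairing with $A/G$ again principally polarised. Isotropy would follow from the fact that $A[\ell]$ is a free rank-two $\OO_F/\ell\OO_F$-module together with self-adjointness of the $\OO_F$-action for the Weil pairing — the Rosati involution restricts to complex conjugation, which fixes $F$ pointwise — so that any $\OO_F/\frakell$-line in $A[\ell]$ is isotropic of the right size; and hypothesis (1), the existence of a totally positive generator $\beta$ of $\frakell$, is precisely what makes the target principally polarised, via the analysis of $\frakell$-isogenies in \cite{bjw}. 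Hypothesis (2) would then supply the divisibility condition on the conductor gap $[\OO_F:\ZZ[\pi+\overline\pi]]$ required by \cite{cr} — ultimately so that the $2\ell$-torsion structure carrying the theta data behaves as expected and $A[\frakell]$ has the predicted size. That algorithm runs in time polynomial in $\ell$ and $\log q$.

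Now suppose $N\frakell = \ell$, so $\OO_F/\frakell \cong \FF_\ell$ and $G$ is cyclic of order $\ell$; then $\phi$ is a cyclic isogeny and I would apply the algorithm of \cite{dudeanu, djrv}. Hypotheses (1) and (2) play the same roles: hypothesis (1) ensures $A/G$ is principally polarisable, so that $\phi$ is of the type these algorithms compute, and hypothesis (2) supplies the conductor-divisibility condition their inputs must meet; this algorithm is again polynomial in $\ell$ and $\log q$. In either case I would finish by running Mestre's algorithm \cite{mestre}, which at polynomial cost reconstructs the Jacobian of a genus-$2$ curve defined over the minimal field, giving the normalised output.

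Finally, for the running-time accounting, write the total cost of the relevant subroutine and the reconstruction step as $C\,\ell^{d_0}(\log q)^{e_0}$ for absolute constants $C, d_0, e_0$. The hypothesis $\ell \le L[1/2,c_0](q)$ gives $\ell^{d_0} \le L[1/2, d_0 c_0](q)$, while the factor $(\log q)^{e_0} = \exp(e_0\log\log q)$ is absorbed into the $o(1)$ in the exponent of the subexponential function; hence the running time is $L[1/2, d c_0](q)$ with $d = d_0$. I expect the main obstacle to be the input-validity step in each case: matching hypotheses (1) and (2) precisely to the isotropy, polarisation, and conductor conditions that \cite{cr, dudeanu, djrv} impose on their inputs, and confirming that the output is again a principally polarised abelian surface. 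This is essentially a bookkeeping argument combining the structure of the Weil pairing on the $\OO_F/\ell\OO_F$-module $A[\ell]$ with the CM-theoretic criterion for when the class-group action preserves principal polarisability, not anything requiring new ideas.
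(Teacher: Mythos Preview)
Your proposal is correct and follows essentially the same approach as the paper, which does not give a formal proof but only the preceding paragraph: split according to whether $N\frakell = \ell^2$ or $\ell$, invoke \cite{cr} or \cite{dudeanu,djrv} respectively, normalise via Mestre, and convert the polynomial bound in $\ell$ and $\log q$ into $L[1/2, dc_0](q)$. One small correction: the paper states explicitly that hypotheses (1) and (2) are needed \emph{only} in the cyclic case --- for $(\ell,\ell)$-isogenies the kernel is automatically maximal isotropic and the target automatically principally polarised, so your attribution of both hypotheses to the $(\ell,\ell)$ case is unnecessary (though harmless).
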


The extra assumptions in Theorem \ref{iso-compute} are required only for isogenies of degree $\ell$, which are called cyclic isogenies. These isogenies are the hardest case to handle when computing $\frakell$-isogenies.  If the extra assumptions are dropped, then it is possible that the target variety does not admit any principal polarization, which presents a major problem to computing the isogeny.  Bisson avoided this problem by using CM-types and reflex fields to generate relations which are easily computable \cite[\S4]{bisson} . However, it is only known that these easily computed relations are sufficient for determining the endomorphism ring under certain heuristic assumptions \cite[Theorem 7.1]{bisson}.

\subsection{Navigating isogeny graphs and identifying abelian varieties with invariants} \label{inv-pol}
We need a way to identify abelian varieties as we navigate the various  $\frakell$-isogeny graphs with the class group action.  To do this, we follow the ideas of \cite[\S4.2]{jw}.
Rosenhain invariants\footnote{
There are other alternative invariants that can be used in a similar way, such as G\"undlach or Igusa invariants. For our purposes, the choice of invariants used is not important.} 
can be used to identify isomorphism classes of principally polarized simple ordinary abelian varieties of dimension $g= 2$ over $\FF_q$, i.e. pairs $(A,\lambda)$ where $A$ is an abelian variety over $\FF_q$ with a fixed principal polarization $\lambda$.    Meanwhile, $\cl(\OO)$ acts on unpolarized abelian varieties, i.e. abelian varieties $A$ with no fixed polarization.  We say that $A$ is \emph{principally polarizable} if a principal polarization $\lambda$ exists, but is not fixed. By definition, a single isomorphism class of principally polarizable abelian varieties can be partitioned into isomorphism classes of principally polarized abelian varieties according to the different choices for principal polarization.   Thus the isomorphism class of $A$ as an unpolarized principally polarizable abelian variety is uniquely represented by the list of invariants which correspond to the different polarizations of $A$, up to isomorphism.

To investigate how the class group action relates to the different choices of polarizations, it is useful to remember the  \emph{polarized class group}\footnote{Also known as the \emph{Shimura class group}.}, as discussed in \cite{bisson,bjw, jw}.  For a given order $\OO\subseteq K$, the polarized class group is defined as follows.
  $$
  	\frakC(\OO) 
	= \{(\aa,\alpha) 
		\mid \aa\subseteq \OO, 
		\aa\overline\aa = \alpha\OO,
		 \alpha \in F \textnormal{ totally positive}\}/\sim.
$$
Multiplication is performed component-wise, and $(\aa, \alpha)\sim (\bb,\beta)$ if there is an element $u\in K^\times$ such that $\aa = u\bb$ and $\beta = u\overline u \alpha$. Given an order $\OO\subseteq K$ containing $\OO_F$, the structure of $\frakC(\OO)$ can be seen through the following exact sequence
  \begin{equation}
  	1 \to (\OO_F^\times)^+/N_{K/F}(\OO^\times) 
	\xrightarrow{u\mapsto (\OO,u)} \frakC(\OO) 
	\xrightarrow{(\aa, \alpha)\mapsto \aa} \Cl(\OO) 
	\xrightarrow{\aa\mapsto N_{K/F}(\aa)} \Cl^+(\OO_F),
	\label{class-group-seq}
  \end{equation}
  where $(\OO_F^\times)^+$ is the set of totally positive units in $\OO_F$, $N_{K/F}$ is the relative norm from $K$ to $F$, and $\Cl^+(\OO_F)$ is the narrow class group of $F$.
   While the usual class group $\cl(\OO)$ acts on isomorphism classes of abelian varieties, the polarized class group $\frakC(\OO)$ acts on isomorphism classes of {principally polarized} abelian varieties.
The image of $\frakC(\OO)$ inside of $\Cl(\OO)$ is the subgroup of ideals which act freely on the set of principally {polarizable} abelian varieties. By assuming that $\Cl^+(\OO_F)$ is trivial, we ensure that $\frakC(\OO)\to \Cl(\OO)$ is surjective, hence all isogenies arising from the action of $\Cl(\OO)$ preserve principal polarizability, which is necessary for the sake of computing isogenies.   The exact sequence also shows how to count the number of isomorphism classes of principal polarizations.

\begin{prop} 
If $A$ is a principally polarizable ordinary simple abelian variety over $\FF_q$ with maximal RM and $\End A = \OO$, then the group 
$$
	\mathcal U(\OO) := (\OO_F^\times)^+/N_{K/F}(\OO^\times)
$$
 acts freely and transitively on the set of isomorphism classes of principal polarizations of $A$.   When the dimension of $A$ is $g = 2$, then $\mathcal U(\OO)$ is an $\FF_2$-vector space of dimension $0\leq d\leq 1$ and $d$ only depends on $F$ and $K$.
 \end{prop}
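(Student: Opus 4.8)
The statement has two parts, and I would treat them separately. For the torsor statement (valid for all $g$), the plan is to run the Shimura--Taniyama dictionary used for Theorem~\ref{action} (cf. \cite{shimura-taniyama, waterhouse}): $A$ corresponds to a fractional $\OO$-ideal $\aa$ equipped with the CM type $\Phi$ cut out by $\pi$, a principal polarization on $A$ is the datum of an element $\xi \in K^\times$ which is totally imaginary ($\overline{\xi} = -\xi$), $\Phi$-positive, and satisfies $\xi\,\aa\,\overline{\aa} = \mathfrak{d}_{K/\QQ}^{-1}$, and two such data $\xi, \xi'$ give isomorphic principally polarized varieties exactly when $\xi'/\xi \in N_{K/F}(\OO^\times)$, because $\operatorname{Aut}(A) = \OO^\times$ acts on polarizations through $u \mapsto u\overline{u} = N_{K/F}(u)$. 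I would then check the two elementary points that make the set of valid data a $(\OO_F^\times)^+$-torsor: (i) if $\xi$ is valid and $u \in (\OO_F^\times)^+ \subseteq \OO_K^\times$, then $u\xi$ is valid (the sign, the $\Phi$-positivity, and the ideal $\mathfrak{d}_{K/\QQ}^{-1}$ are unchanged because $u$ is a totally positive unit of $\OO_F$); and (ii) if $\xi, \xi'$ are both valid, then $\xi'/\xi$ is fixed by complex conjugation (both are totally imaginary), is totally positive (both are $\Phi$-positive), and is a unit of $\OO$ (the two ideal equalities coincide), hence lies in $\OO^\times \cap F = \OO_F^\times$. Passing to isomorphism classes quotients this torsor by $N_{K/F}(\OO^\times)$, so the isomorphism classes of principal polarizations form an $\mathcal U(\OO)$-torsor; it is nonempty exactly because $A$ is assumed principally polarizable, which is the asserted free, transitive action. (Alternatively this is visible in the exact sequence~\eqref{class-group-seq}: $\mathcal U(\OO)$ is the kernel of $\frakC(\OO)\to\cl(\OO)$, and that kernel is precisely what acts on a fibre of the forgetful map from principally polarized to principally polarizable varieties.)

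For $g = 2$ I would argue as follows. First, $\mathcal U(\OO)$ is $2$-torsion: maximal RM gives $\OO_F \subseteq \OO$, so $\OO_F^\times \subseteq \OO^\times$, and for $u \in (\OO_F^\times)^+ \subseteq \OO^\times$ we get $u^2 = u\overline{u} = N_{K/F}(u) \in N_{K/F}(\OO^\times)$; since $(\OO_F^\times)^+$ is finitely generated (finite index in $\OO_F^\times$, which is finitely generated by Dirichlet), $\mathcal U(\OO)$ is a finite-dimensional $\FF$-vector space over $\FF_2$. Next, $d \le 1$: for $g = 2$ the field $F$ is real quadratic, so $\OO_F^\times = \{\pm 1\} \times \langle \varepsilon \rangle$ for a fundamental unit $\varepsilon$, and because $N_{K/F}(\OO^\times) \supseteq N_{K/F}(\OO_F^\times) = (\OO_F^\times)^2 = \langle \varepsilon^2 \rangle$, the group $\mathcal U(\OO)$ is a quotient of $(\OO_F^\times)^+/(\OO_F^\times)^2$, which is $\langle \varepsilon \rangle / \langle \varepsilon^2 \rangle \cong \ZZ/2$ if $N_{F/\QQ}(\varepsilon) = 1$ and is trivial if $N_{F/\QQ}(\varepsilon) = -1$; in either case its $\FF_2$-dimension is at most $1$. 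Finally, that $d$ depends only on $F$ and $K$: using the standing assumption $\OO_K^\times = \OO_F^\times$, every intermediate order is squeezed, $\OO_F^\times \subseteq \OO^\times \subseteq \OO_K^\times = \OO_F^\times$, so $\OO^\times = \OO_F^\times$ and $N_{K/F}(\OO^\times) = (\OO_F^\times)^2$; hence $\mathcal U(\OO) = (\OO_F^\times)^+/(\OO_F^\times)^2$ is the same for every $\OO$, equal to $0$ or $\ZZ/2$ according to the sign of $N_{F/\QQ}(\varepsilon)$. (Under the stronger hypothesis $\Cl^+(\OO_F) = 1$ invoked in this section, that fundamental unit automatically has norm $-1$, so $(\OO_F^\times)^+ = (\OO_F^\times)^2$ and $\mathcal U(\OO)$ is trivial for every order, i.e. $d = 0$.)

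The delicate point is the last one: that $d$ is genuinely the same integer for every order $\OO$ between $\OO_F$ and $\OO_K$, not merely that each such $d$ lies in $\{0,1\}$. This independence can fail without control on the units of non-maximal orders, so one must lean on the standing hypotheses; the supporting input is that $\OO^\times$ for $\OO_F \subseteq \OO \subseteq \OO_K$ has finite index in $\OO_K^\times$ bounded by the Hasse unit index, which collapses to $\OO^\times = \OO_F^\times$ once $\OO_K^\times = \OO_F^\times$. By contrast the torsor statement is routine once one has the right polarization datum and isomorphism criterion from CM theory; the only care needed there is that we work over $\FF_q$ rather than over $\CC$, for which the references cited already furnish the dictionary.
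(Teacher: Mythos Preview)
The paper does not prove this proposition; it simply records it as a summary of Proposition~5.4 and Lemma~5.5 of \cite{bjw}. Your proposal therefore takes a genuinely different route, replacing a citation by a direct argument, and that argument is essentially correct. For the torsor claim, your CM-dictionary sketch is sound, with one cosmetic caveat: the explicit condition $\xi\,\aa\,\overline{\aa} = \mathfrak d_{K/\QQ}^{-1}$ is the $\OO = \OO_K$ form of the dictionary, and for non-maximal $\OO$ the inverse different should be replaced by the trace dual of $\OO$; your key step (that $\xi'/\xi$ is fixed by conjugation, totally positive, and a unit of $\OO\cap F = \OO_F$) is unaffected. The exact-sequence alternative you mention is probably closer to how \cite{bjw} packages the statement. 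For $g=2$, your $2$-torsion and $d\le 1$ computations are clean. For the independence of $d$ from $\OO$ you invoke $\OO_K^\times = \OO_F^\times$ and correctly flag it as the delicate point; note, however, that this hypothesis is only introduced in Section~\ref{alg-sec} as part of the requirements~\textbf{(R)}, not in Section~\ref{inv-pol} where the proposition sits, so strictly your argument proves slightly less than stated. In fact the squeeze $(\OO_F^\times)^2 = N_{K/F}(\OO_F^\times) \subseteq N_{K/F}(\OO^\times) \subseteq N_{K/F}(\OO_K^\times)$ already collapses for every primitive quartic CM field: $N_{K/F}$ sends roots of unity to $1$, and either $\OO_K^\times = \OO_F^\times$ by \cite[Lemma~II.3.3]{streng} or $K = \QQ(\zeta_5)$ and $\OO_K^\times = \mu_K\cdot\OO_F^\times$, so in all cases $N_{K/F}(\OO_K^\times) = (\OO_F^\times)^2$. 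This recovers the full independence claim without appealing to \textbf{(R)}.
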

  \begin{proof} 
 This Proposition summarizes Proposition 5.4 and Lemma 5.5 of \cite{bjw}.
  \end{proof}
  
 This proposition implies that one may check whether an ordinary principally polarized abelian surface $A$ is fixed under the action of an ideal $\aa$ by simply computing the list of one or two invariants for the target variety with different polarizations, and checking if the invariants of $A$ are on this list.  
  
  There are two practical improvements to mention.  First, if $\LL\subseteq \OO$ corresponds to an $(\ell,\ell)$-isogeny, then one can compute $\phi_\LL$ as polarization-preserving by working with the element $(\LL,\ell)\in \frakC(\OO)$, as in \cite{bisson}. Thus, it is always sufficient to only check one invariant for $(\ell,\ell)$-isogenies. Second, if one is computing a chain of isogenies $\phi_{\LL_1}, \dots, \phi_{\LL_k}$ corresponding to prime ideals $\LL_i$ and $\phi_{\LL_k}$ is  a cyclic isogeny, then we can simply make an arbitrary choice of polarization for the target varieties at each step $\phi_{\LL_1},\dots, \phi_{\LL_{k-1}}$, and compute both polarizations for $\phi_{\LL_k}$, if necessary.   Hence, the issue of polarizations will not change the complexity of any algorithms, and we will let this detail be implicit in the remainder of our exposition.

 \subsection{Small Primes\label{sp}}
We only use the class group action to determine the power of $\pp$ dividing $\ff^+(\End A)$ when $\pp$ is a prime ideal dividing $\vv = \ff^+(\OO_F[\pi])$ which has large norm.  For small primes $\pp$ such that $\pp^k \mid \vv$, one could follow Bisson \cite{bisson} and use the method of Eisentr\"ager and Lauter \cite{eis-lauter} to test whether $\End A$ contains $\OO(\pp^k)$.  However, this method does not immediately produce an isogenous abelian variety $A'$ such that $\ff^+(\End A)$ is not divisible by the small prime factors of $\vv$, which is a feature in Bisson and Sutherland's original elliptic curve algorithm \cite{bisson-sutherland}.  For this, we need the following additional result about isogeny graphs. More background may be found in \cite{bjw}.

\begin{theorem}[{\cite[Theorem 4.3]{bjw}}]
Let $V$ be any connected component of the $\frakell$-isogeny graph for the isogeny class of an ordinary, absolutely simple abelian variety $A$ with Frobenius $\pi$ and maximal RM by $F$.  

\begin{enumerate}
	\item The graph $V$ consists of levels $\{V_i\}_{i\geq 0}$ such that each level $V_i$ shares a common endomorphism ring $\OO_i$ and the valuation at $\frakell$ of $\ff^+(\OO_i)$ is $i$.  The valuation of $\ff^+(\OO_i)$ at other primes is the same for all $i$, and the number of levels is equal to the valuation at $\frakell$ of $\ff^+(\OO_F[\pi])$.  
	\item  The graph $V$ is an $N(\frakell)$-volcano if and only if $\OO_0^\times \subseteq F$ and $\frakell$ is principal in $\OO_0\cap F$.
\end{enumerate}
\label{iso-graph}
 \end{theorem}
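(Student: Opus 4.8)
Since this theorem is quoted verbatim from \cite{bjw}, in the paper it is only cited; but here is how I would prove it, following Kohel's analysis of ordinary elliptic-curve isogeny volcanoes \cite{kohel} and upgrading it through the correspondence between orders containing $\OO_F$ and ideals of $\OO_F$ (Theorem \ref{ideal}). \emph{The edge trichotomy and Part 1.} Given an $\frakell$-isogeny $\phi\colon A\to A'$, the plan is to compare $\End A$ with $\End A'$: both contain $\OO_F[\pi]$, so by Theorem \ref{ideal} they are recorded by ideals $\ff^+(\End A),\ff^+(\End A')\subseteq\OO_F$. Pulling endomorphisms back through $\phi$ and through the dual $\hat\phi$, and working in the localization at $\frakell$ — the only prime at which $\ker\phi$ lives — one checks that these two ideals agree at every prime of $\OO_F$ other than $\frakell$ and have $\frakell$-valuations differing by at most $1$. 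This yields the \emph{ascending} / \emph{horizontal} / \emph{descending} trichotomy of Kohel's thesis. Grouping the vertices of a connected component $V$ by the $\frakell$-valuation of $\ff^+(\End A)$ produces the levels $V_i$, each with a common endomorphism ring $\OO_i$ of $\frakell$-conductor-valuation $i$; since $\ff^+(\End A)$ divides $\vv=\ff^+(\OO_F[\pi])$ for every $A$ in the isogeny class, the levels exhaust the possible valuations, which pins down their number, and the constancy of the remaining prime valuations along $V$ drops out of the same local computation.

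\emph{Edge counts and the tree below the surface.} The key local fact, already extracted in the proof of Lemma \ref{deg-frak-iso}, is that $A[\frakell]\cong\OO_i/\frakell\OO_i$ is a $2$-dimensional $\OO_F/\frakell$-vector space, so there are always exactly $N(\frakell)+1$ $\frakell$-isogenies out of $A$, indexed by its lines. I would analyze the $\OO_F/\frakell$-linear action of $\pi$ on $A[\frakell]$: away from the bottom level it acts as a scalar (using $\pi\equiv(\pi+\overline\pi)/2$ modulo a high enough power of $\frakell$, which needs $\frakell$ odd), so every line is $\FF_q$-rational and all $N(\frakell)+1$ isogenies are defined over $\FF_q$, while at the bottom level $\pi$ is a single Jordan block and fixes exactly one line. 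Combined with the trichotomy this gives: a surface vertex has $1+(K/\frakell)$ horizontal and $N(\frakell)-(K/\frakell)$ descending edges (consistent with Theorem \ref{isograph}); a vertex at an intermediate level has a unique ascending edge — the unique ``sub-ideal'' line of $\OO_i/\frakell\OO_i$, reflecting that $\OO_{i-1}$ is generated over $\OO_i$ by one element at $\frakell$ — and $N(\frakell)$ descending edges, with no horizontal ones; a bottom vertex is a leaf. Connectedness of $V$ then forces its non-surface part to be a disjoint union of complete $N(\frakell)$-ary trees rooted on the cycle $V_0$.

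\emph{The volcano criterion (Part 2).} Granting these counts, $V$ is an $N(\frakell)$-volcano precisely when the branching is genuinely $N(\frakell)$ and the surface has the expected regular shape, and I would show that both can fail for exactly two reasons. First, if $\OO_0^\times\not\subseteq F$, the surplus automorphisms of the top-level varieties act non-scalarly on $A[\frakell]$ and also permute the principal polarizations of each target — via the free transitive action of $\mathcal U(\OO)=(\OO_F^\times)^+/N_{K/F}(\OO^\times)$ in the Proposition above — so they identify otherwise distinct targets and the graph ``folds,'' exactly as it does at $j=0,1728$ for elliptic curves; when $\OO_0^\times\subseteq F$ these automorphisms are scalars on $A[\frakell]$ and no folding occurs. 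Second, whether the horizontal edges actually remain inside $V$ (so $V_0$ is a genuine cycle and not a cycle with edges deleted) is governed by the class of $\frakell$ through the exact sequence \eqref{class-group-seq}: a prime $\LL\mid\frakell$ of $\OO_0$ has $N_{K/F}(\LL)=\frakell$, so $[\LL]$ lies in the image of $\frakC(\OO_0)\to\Cl(\OO_0)$ — equivalently $\phi_\LL$ respects principal polarizability — exactly when $[\frakell]$ is trivial in $\Cl^+(\OO_F)$, i.e.\ when $\frakell$ is principal in $\OO_0\cap F=\OO_F$. Combining the ``if'' and ``only if'' directions gives Part 2.

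\emph{Where the difficulty lies.} The delicate step is the interaction of the last two paragraphs: one must determine precisely how $\OO_0^\times/\OO_F^\times$ permutes the outgoing $\frakell$-isogenies and the principal polarizations of their targets, and then verify that once $\OO_0^\times\subseteq F$ nothing further collapses the graph. This is the one genuinely subtle point — the analogue of the exceptional behaviour at $j=0$ and $j=1728$ in the elliptic curve theory — while everything else is bookkeeping with local orders, the $\pi$-action on $A[\frakell]$, and the class group action.
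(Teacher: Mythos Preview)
You correctly note that the paper does not prove this theorem --- it is quoted from \cite{bjw} and used as a black box --- so there is no argument in the paper to compare against, and I can only assess your sketch on its own terms. Your outline of Part~1 (the ascending/horizontal/descending trichotomy obtained by pulling endomorphisms through $\phi$ and its dual, localized at $\frakell$, and the resulting level decomposition) is the standard Kohel-style roadmap and is sound.

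Your treatment of Part~2, however, has a genuine gap. The $\frakell$-isogeny graph under discussion is a graph of \emph{unpolarized} varieties (cf.\ the sets $\textnormal{Ab}_{\pi,\ff^+}$ and Theorem~\ref{action}), so whether $\phi_\LL$ preserves principal polarizability is not what governs the volcano structure, and the exact sequence \eqref{class-group-seq} is the wrong tool. That sequence terminates in $\Cl^+(\OO_F)$, and you then write ``i.e.\ when $\frakell$ is principal in $\OO_0\cap F=\OO_F$'' --- but triviality in the \emph{narrow} class group requires a totally positive generator and is strictly stronger than ordinary principality, so even on its own terms your argument does not recover the stated biconditional. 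The role of principality in \cite{bjw} is polarization-free: if $\frakell=\beta\OO_F$, then every $\frakell$-isogeny $\phi\colon A\to A'$ admits a companion $\frakell$-isogeny $\psi\colon A'\to A$ determined by $\psi\circ\phi=\beta$, with $\ker\psi=\phi(A[\frakell])$ again a line in $A'[\frakell]$; this is what makes the graph undirected with the correct edge multiplicities for a volcano, and it is unavailable when $\frakell$ is non-principal. The condition $\OO_0^\times\subseteq F$ is likewise handled directly --- a non-real unit acts non-scalarly on the $2$-dimensional $\OO_F/\frakell$-space $A[\frakell]$ and so identifies distinct kernels --- without any detour through $\mathcal U(\OO)$. (A smaller point: ``$\pi\equiv(\pi+\overline\pi)/2$'' is not a meaningful congruence; what you want is that $\pi$ acts on $A[\frakell]$ through $\OO_F/\frakell$ exactly when $\pi\in\OO_F+\frakell\cdot\End A$, i.e.\ when $A$ sits above the bottom level.)
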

 
 This theorem implies that finding the power of each $\frakell\mid \vv$ which divides $\ff^+(A)$ is equivalent to finding the level of $A$ in the $\frakell$-isogeny graph.  When navigating the graph, repeatedly moving from level $V_{i+1}$ to level $V_{i}$ until reaching $V_0$ is known as \emph{ascending} the graph. 
We know that the $\frakell$-isogeny graph is a volcano in our applications with no additional assumptions because we will assume in our algorithms that $\cl^+(F) = 1$ and $\OO_F^\times = \OO_K^\times$ for the sake of Theorem \ref{iso-compute} and Lemma \ref{biglemma}. Hence the algorithms presented for volcano navigation in \cite{sutherland} immediately generalize and provide a way to find the level of a variety and ascend the graph.
   We call this method \emph{isogeny climbing}, following the terminology given in the elliptic curve case. Isogeny climbing allows one to determine the power of $\frakell$ dividing $\ff^+(A)$, and also allow one to find a new abelian variety $A'$ such that $\ff^+(A')$ is the same as $\ff^+(A)$ with a given small prime factor removed. Notice that these methods are only efficient for small primes because we have to compute a large number of isogenies.

\section{Class Group Relations}

Let $A$ be an absolutely simple, ordinary, principally polarized abelian variety with maximal RM, of dimension $g$ over $\FF_q$ with Frobenius $\pi$.  We do not restrict the dimension $g$ or the class group of $F$ for this section because no simplicity is gained by focusing on  special cases. Moreover, we recover many of the statements of \cite{bisson-sutherland} when setting $g = 1$. As before, write $K = \QQ(\pi)$ and let $F$ be the maximal totally real subfield.
We begin by recalling a well-known fact about class groups.
\begin{lemma} 
	If $\OO(\ff_1^+) \subseteq \OO(\ff_2^+)$, then there is a surjective map $\cl(\OO(\ff_1^+)) \to \cl(\OO(\ff_2^+))$ induced by mapping $\aa \mapsto \aa\OO(\ff_2^+)$.
 \end{lemma}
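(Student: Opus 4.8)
The plan is to regard this as a statement of pure commutative algebra about the two orders $\OO_1 := \OO(\ff_1^+) \subseteq \OO_2 := \OO(\ff_2^+)$ in $K$; nothing about the Frobenius or the real subfield is needed. Throughout, $\cl(\OO_i)$ denotes the group of invertible fractional $\OO_i$-ideals modulo nonzero principal ideals, i.e.\ $\Pic(\OO_i)$. The first step is to check that extension of ideals $\aa \mapsto \aa\OO_2$ gives a well-defined group homomorphism $\cl(\OO_1) \to \cl(\OO_2)$. This is routine: extension is multiplicative, $(\aa\OO_2)(\bb\OO_2) = (\aa\bb)\OO_2$; it carries invertible ideals to invertible ideals, since $\aa\aa^{-1} = \OO_1$ extends to $(\aa\OO_2)(\aa^{-1}\OO_2) = \OO_2$; and it carries principal ideals to principal ideals, since $(\alpha\OO_1)\OO_2 = \alpha\OO_2$. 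Hence it descends to the asserted map, and only surjectivity requires argument.

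For surjectivity, let $\cc = \{\alpha \in K : \alpha\OO_K \subseteq \OO_1\}$ be the conductor of $\OO_1$. A quick check shows that $\cc$ is simultaneously an ideal of $\OO_1$, of $\OO_2$ and of $\OO_K$, and that $\OO_1$ and $\OO_2$ have the same localization, namely $\OO_{K,\pp}$, at every prime $\pp$ of $\OO_1$ not containing $\cc$. Fix a class in $\cl(\OO_2)$. Using the standard fact that every ideal class of an order has an integral representative coprime to any prescribed nonzero ideal, pick such a representative $\bb$ with $\bb + \cc = \OO_2$, and put $\aa := \bb \cap \OO_1$. I would then verify, in order: (i) $\aa + \cc = \OO_1$, by writing any $x \in \OO_1 \subseteq \OO_2 = \bb + \cc$ as $x = b + c$ with $c \in \cc \subseteq \OO_1$, so that $b = x - c \in \bb \cap \OO_1 = \aa$; (ii) $\bb\cc \subseteq \bb \cap \OO_1 = \aa$; (iii) $\bb = \bb\OO_1 = \bb(\aa + \cc) = \bb\aa + \bb\cc \subseteq \aa\OO_2$, which together with the evident inclusion $\aa\OO_2 \subseteq \bb$ gives $\aa\OO_2 = \bb$; and (iv) $\aa$ is an invertible $\OO_1$-ideal, checked prime by prime — if $\cc \not\subseteq \pp$ then $\OO_{1,\pp} = \OO_{K,\pp}$ is a discrete valuation ring, so the nonzero ideal $\aa_\pp$ is principal, while if $\cc \subseteq \pp$ then localizing (i) gives $\aa_\pp + \cc_\pp = \OO_{1,\pp}$ with $\cc_\pp$ inside the maximal ideal, so $\aa_\pp = \OO_{1,\pp}$ by Nakayama. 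Then $[\aa] \in \cl(\OO_1)$ maps to $[\aa\OO_2] = [\bb]$, the class we started with, so the map is onto.

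Steps (i)--(iv) are just bookkeeping with sums, products and intersections of ideals. The one genuinely nontrivial ingredient, and the step I expect to be the main obstacle, is the quoted fact that every ideal class of an order contains a representative coprime to a given modulus: for Dedekind domains this is immediate, but for a general order it needs a small approximation (or CRT) argument exploiting that the representative, being invertible, is locally principal at the finitely many primes dividing $\cc$. In the write-up I would either cite this as classical or supply that short argument.
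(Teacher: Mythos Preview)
Your proof is correct and follows essentially the same approach as the paper: both arguments rest on the fact that extension and contraction of ideals coprime to the conductor are inverse operations, so that every class in $\cl(\OO_2)$ can be represented by an ideal $\bb$ coprime to the conductor of $\OO_1$ and then pulled back to $\aa = \bb \cap \OO_1$. The paper simply cites this extension/contraction correspondence (going through $\OO_K$) from \cite[Proposition 3.4]{lv-deng}, whereas you spell it out directly between $\OO_1$ and $\OO_2$; your version is more self-contained but not a genuinely different route.
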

 
  \begin{proof} 
  For any order $\OO\subseteq K$ of conductor $\ff$, let $I_{K,\OO}(\ff)$ be the group generated by ideals of $\OO$ coprime to $\ff$ and let $I_{K}(\ff)$ be the group generated by ideals of $\OO_K$.  There is an isomorphism $I_{K,\OO}(\ff) \to I_K(\ff)$ given by the map $\aa \mapsto \aa\OO_K$ where the inverse is given by $\bb\mapsto \bb\cap \OO$ \cite[Proposition 3.4]{lv-deng}.  This map induces the surjective map of class groups.
  \end{proof}
 
 This map allows us to define the concept of a relation analogously to Bisson \cite{bissonGRH, bisson}.  Note that we do not follow Bisson and Sutherland's definition of relation in \cite{bisson-sutherland} because we do not have the benefit of an easily computable modular polynomial.

\begin{definition} 
	A \textbf{relation} $R$ is a tuple of ideals $(\aa_1, \dots, \aa_k)$ of ideals of $\OO_F[\pi]$.  A relation \textbf{holds} in an order $\OO$ if the product is trivial under the map from the lemma.  Similarly, a relation \textbf{holds} for $A$ if the corresponding composition of isogenies fixes $A$.
 \end{definition}

 Combining the lemma and the definition of relation, we obtain the following key corollary which is analogous to \cite[Corollary 4]{bisson-sutherland}.  This allows us to create an algorithm where testing class group relations determines the prime powers $\pp^k$ which divide $\ff^+(A)$.
 \begin{corollary}
	Let $\vv\subseteq \OO_F$ be an ideal which is divisible by a prime power $\pp^k\subseteq \OO_F$.  Write 
	$$
		\ff_1^+  = \pp^{k - 1 - v_\pp(\vv) } \vv;
	$$
	$$
		\ff_2^+ = \pp^k.
	$$  
	Assume there is some relation $R$  which holds in $\OO(\ff_1^+)$ but not in $\OO(\ff_2^+)$.
	Given any $\ff^+\subseteq \OO_F$ which divides $\vv$, $\pp^k \mid \ff^+$ if and only if the relation $R$ does not hold in $\OO(\ff^+)$.
	\label{maincor}
\end{corollary}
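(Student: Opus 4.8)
The plan is to reduce the statement to two ingredients: the order-reversing dictionary between ideals of $\OO_F$ and orders of $K$ containing $\OO_F$ from Theorem~\ref{ideal}, and a monotonicity property of relations coming from the surjectivity lemma above. First I would record the dictionary in the form I will use it: for ideals $\aa,\bb\subseteq\OO_F$ one has $\aa\mid\bb$ if and only if $\bb\subseteq\aa$ if and only if $\OO(\bb)=\OO_F+\bb\OO_K\subseteq\OO_F+\aa\OO_K=\OO(\aa)$; equivalently, the more divisible ideal corresponds to the smaller order. Then I would extract from the preceding lemma and the definition of when a relation holds the following monotonicity statement: if $\OO_F[\pi]=\OO(\vv)\subseteq\OO_1\subseteq\OO_2$ and a relation $R$ holds in $\OO_1$, then $R$ holds in $\OO_2$. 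This follows because all of the class-group maps in play are induced by extension of ideals, $\aa\mapsto\aa\OO_i$, and such extensions compose ($(\aa\OO_1)\OO_2=\aa\OO_2$), so the image of the product appearing in $R$ under $\cl(\OO(\vv))\to\cl(\OO_2)$ is the image of its already-trivial image in $\cl(\OO_1)$; triviality is therefore inherited.

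Granting these, the corollary is a short computation with $\pp$-adic valuations. For the direction ``$\pp^k\mid\ff^+\Rightarrow R$ does not hold in $\OO(\ff^+)$'': since $v_\pp(\ff^+)\ge k$ and $\ff_2^+=\pp^k$ is supported only at $\pp$, we get $\ff_2^+\mid\ff^+$, hence $\OO(\ff^+)\subseteq\OO(\ff_2^+)$; if $R$ held in $\OO(\ff^+)$, monotonicity would force it to hold in $\OO(\ff_2^+)$, contrary to hypothesis. For the converse I would prove the contrapositive: assume $\pp^k\nmid\ff^+$, i.e.\ $v_\pp(\ff^+)\le k-1$. From $\ff^+\mid\vv$ we have $v_\qq(\ff^+)\le v_\qq(\vv)=v_\qq(\ff_1^+)$ for every prime $\qq\ne\pp$, and $v_\pp(\ff^+)\le k-1=v_\pp(\ff_1^+)$; hence $\ff^+\mid\ff_1^+$, so $\OO(\ff_1^+)\subseteq\OO(\ff^+)$. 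Since $R$ holds in $\OO(\ff_1^+)$ by hypothesis, monotonicity gives that $R$ holds in $\OO(\ff^+)$. The two directions together are exactly the claim.

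The part demanding the most care is not any individual inequality but keeping the variances consistent: Theorem~\ref{ideal} is order-reversing while relation-monotonicity runs from smaller orders to larger ones, so $\ff_1^+$ and $\ff_2^+$ must be pinned down exactly --- $\ff_1^+$ agreeing with $\vv$ away from $\pp$ with $v_\pp(\ff_1^+)=k-1$, and $\ff_2^+=\pp^k$ --- precisely so that every $\ff^+\mid\vv$ with $\pp^k\nmid\ff^+$ divides $\ff_1^+$ and every $\ff^+\mid\vv$ with $\pp^k\mid\ff^+$ is divisible by $\ff_2^+$. A secondary technical point worth a sentence is that the ideals comprising $R$ should be taken coprime to all conductors involved, so that the isomorphisms $I_{K,\OO}(\ff)\cong I_K(\ff)$ of the lemma apply uniformly across the chain of orders; this is no restriction, since relations are built from such ideals.
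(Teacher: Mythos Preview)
Your proof is correct and follows essentially the same approach as the paper: both directions hinge on the order-reversing correspondence $\aa\mid\bb\Leftrightarrow\OO(\bb)\subseteq\OO(\aa)$ together with the monotonicity of relations (from the surjectivity lemma), applied to the inclusions $\OO(\ff^+)\subseteq\OO(\ff_2^+)$ when $\pp^k\mid\ff^+$ and $\OO(\ff_1^+)\subseteq\OO(\ff^+)$ when $\pp^k\nmid\ff^+$. Your explicit valuation bookkeeping and your remark about coprimality of the ideals in $R$ are welcome clarifications, but the argument is otherwise the paper's own.
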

 \begin{proof} 
	If $\pp^k\mid \ff^+$, then $\OO(\ff^+)\subseteq \OO(\ff_2^+)$, so the lemma implies the relation $R$ does not hold in $\OO(\ff^+).$  
	Conversely, suppose $R$ does not hold in $\OO(\ff^+)$.  The lemma implies $\OO(\ff_1^+) \not\subseteq \OO(\ff^+)$.  But $\ff_1$ is the same as $\vv$, except that the power of $\pp$ is decremented.  Because $\ff^+$ divides $\vv$, we deduce that $\pp^k$ divides $\ff^+$.
 \end{proof}
 
\subsection{Review of class field theory}
In order to prove that an algorithm based on Corollary \ref{maincor} is unconditionally correct, we need to find infinitely many relations $R$ that satisfy the necessary conditions.  This is the reason why we use the traditional class group $\cl(\OO)$ instead of the polarized class group $\mathfrak C(\OO)$ used by Bisson.  Specifically, we can use ring class fields and class field theory to understand the structure of $\cl(\OO)$ because prime ideals of $\OO$ are principal if and only if they split completely in the ring class field of $K$; see Corollary \ref{princ=split} below.  

 Throughout this section, $K$ remains a fixed CM-field, as before. Let us review the definitions and notation of class field theory so that we can recall the correspondence between finite abelian extensions of $K$ and certain subgroups of   fractional ideals of $K$.
  We refer the reader to \cite{janusz, lv-deng, neukirch} for additional background on class field theory.
 
   For our purposes, a \emph{modulus} $\mm$ for $K$ is an ideal of $\OO_K$. In general, a modulus can include real infinite primes which correspond to real embeddings, but CM-fields do not have any real embeddings, hence we will ignore the infinite primes in this exposition for simplicity.
Write $I_K$ for the group of all fractional ideals in $K$.  Given a fixed modulus $\mm$, we denote by $I_K(\mm)$ the subgroup of $I_K$ generated by ideals of $\OO_K$ coprime to $\mm$.  
Similarly, let $P_K$ be the group of all principal ideals in $K$. Write
			$P_{K,\OO}(\mm) $ for the group generated by 
			$$
				\{\alpha\OO_K : \alpha\in \OO, \ \ \alpha\OO + \mm = \OO\}
			$$ 
			and $P_{K,1}(\mm)$ for the group generated by 
			$$
				\{\alpha\OO_K : \alpha\in \OO_K, \ \alpha \equiv 1\bmod \mm\OO_K\}.
			$$
			  Note that the definition of $P_{K,1}(\mm)$ must be modified if $\mm$ is divisible by infinite primes, but this is the simplest definition in our case because we will always assume $\mm$ is a product of finite primes; see \cite[Lemma 3.5]{lv-deng}.

A subgroup $H \subset I_K$ is a \emph{congruence subgroup (defined modulo $\mm$)} if
$$
	P_{K,1}(\mm) \subseteq H \subseteq I_K(\mm).
$$
If $\mm$ divides $\mm'$, then $H\cap I_K(\mm')$ is also a congruence subgroup, but it is defined modulo $\mm'$ rather than modulo $\mm$.  In this case, we call $H\cap I_K(\mm')$ a \emph{restricted} congruence subgroup. This leads us to introduce the following equivalence relation. If $H_1$ and $H_2$ are congruence subgroups modulo $\mm_1$ and $\mm_2$, respectively, then we say $H_1$ and $H_2$ are \emph{equivalent} if they have a common restriction, i.e. if there is a modulus $\mm_3$ such that $H_1 \cap I_K(\mm_3) = H_2\cap I_K(\mm_3)$.  In this case, there is an isomorphism $I_K(\mm_1)/H_1 \cong I_K(\mm_2)/H_2$ \cite[Lemma V.6.1]{janusz}.  For each equivalence class $\mathbf H$ of congruence subgroups, there is a unique modulus $\ff$ and a congruence subgroup $H_\ff$ defined modulo $\ff$ such that $H_\ff \in \mathbf H$, and $\ff$ divides the defining modulus of every congruence subgroup in $\mathbf H$ \cite[Lemma V.6.2]{janusz}.  Such an $\ff$ is called the \emph{conductor} of $\mathbf H$.

Let $L$ be a finite abelian extension of $K$, and let $I_K^S$ be the subgroup of $I_K$ generated by prime ideals which do not ramify in $L$.  There is an \emph{Artin map}
$$
	\Phi_{L/K}: I_K^S \to \Gal(L/K)
$$
where a prime $\pp$ is sent to the \emph{Frobenius automorphism} $\sigma_\pp\in \Gal(L/K)$.  Specifically, $\sigma_\pp$ is the unique automorphism such that $\sigma_\pp(x) \equiv x^{N(\pp)} \bmod \mathfrak P$ for all $x\in \OO_L$ for any ideal $\mathfrak P\subseteq \OO_L$ lying over $\pp$.  The map is extended to $I_K^S$ multiplicatively.  
  Write $\Phi_{L/K, \mm}$ for the restriction of $\Phi_{L/K}$ to the subgroup ${I_K(\mm)}$ where $\mm$ is a modulus divisible by all primes of $K$ which ramify in $L$.  We say that the \emph{reciprocity law holds} for $(L, K, \mm)$ if $\ker \Phi_{L/K} \supseteq P_{K,1}(\mm)$. In this case, $\ker \Phi_{L/K,\mm}$ is a congruence subgroup defined modulo $\mm$.

\begin{definition} 
Let $L$ be an abelian extension of $K$. Write $\mathbf H(L/K)$ for the equivalence class of all congruence subgroups $H_\mm(L/K) = \ker \Phi_{L/K,\mm}$ where $\mm$ is modulus such that the reciprocity law holds for $(L,K,\mm)$.  The \emph{class field theory conductor} $\ff(L/K)$ of the extension $L/K$ is the conductor of the equivalence class $\mathbf H(L/K)$, i.e. the greatest common divisor of all moduli defining congruence subgroups in $\mathbf H(L/K)$.
 \end{definition}
 
 \begin{theorem}[The Classification Theorem, {\cite[Theorem V.9.9]{janusz}}]
 \label{class-thm}
The correspondence $L~\mapsto~\mathbf H(L/K)$ is a one-to-one, inclusion-reversing correspondence between finite abelian extensions $L$ of $K$ and equivalence classes of congruence groups of $K$.
  \end{theorem}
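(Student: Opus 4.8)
The plan is to deduce the Classification Theorem from the two deep pillars of class field theory — \textbf{Artin reciprocity} and the \textbf{existence theorem} — together with routine bookkeeping about moduli and congruence subgroups. Concretely, I would establish, in order: (i) that the reciprocity law holds for \emph{every} finite abelian $L/K$ with a suitable admissible modulus, so that $\mathbf H(L/K)$ is a well-defined equivalence class of congruence subgroups; (ii) that $L \mapsto \mathbf H(L/K)$ is injective and surjective onto equivalence classes; and (iii) that the correspondence reverses inclusions. Step (iii) is essentially formal once (i) and (ii) are in hand, so the substance lies in (i) and (ii).

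For (i), the goal is to show that for a modulus $\mm$ divisible by all primes that ramify in $L$ and by the conductor $\ff(L/K)$, the Artin map $\Phi_{L/K,\mm}\colon I_K(\mm)\to\Gal(L/K)$ is surjective with kernel exactly $P_{K,1}(\mm)\cdot N_{L/K}(I_L(\mm))$, which then visibly contains $P_{K,1}(\mm)$ and so is a congruence subgroup defined modulo $\mm$. I would follow the classical route: verify this directly for cyclotomic extensions $\QQ(\zeta_m)/\QQ$, where the Artin symbol of $p$ is the $p$-power map on roots of unity, then reduce a general cyclic $L/K$ to a cyclotomic situation over $K$ by Artin's consistency (base-change) argument. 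The two ingredients that make the reduction work are the \textbf{first inequality} $[I_K(\mm):P_{K,1}(\mm)N_{L/K}(I_L(\mm))]\le[L:K]$, obtained analytically from the nonvanishing of Hecke $L$-functions and a Dirichlet-density count of completely split primes, and the \textbf{second inequality} $[I_K(\mm):P_{K,1}(\mm)N_{L/K}(I_L(\mm))]\ge[L:K]$, obtained cohomologically from a Herbrand-quotient computation for the idele class group. Independence of $\mathbf H(L/K)$ from the choice of admissible $\mm$ follows from the compatibility $\ker\Phi_{L/K,\mm'}=\ker\Phi_{L/K,\mm}\cap I_K(\mm')$ whenever $\mm\mid\mm'$, which is exactly the equivalence relation used to define $\mathbf H(L/K)$.

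For (ii), surjectivity of the correspondence is the \textbf{existence theorem}: given a congruence subgroup $H$ modulo $\mm$, produce an abelian $L/K$ with $H_\mm(L/K)=H$. I would reduce to the case where $I_K(\mm)/H$ is cyclic of prime power order $n$, adjoin $\mu_n$ to reduce further to a Kummer situation, construct the desired extension inside $K(\mu_n)$ by Kummer theory, and descend using the reciprocity law of step (i) — this is where one genuinely uses that $K$ has ``enough'' abelian extensions. Uniqueness then follows by counting: if $\mathbf H(L_1/K)=\mathbf H(L_2/K)$, then restricting to a common admissible modulus gives $\mathbf H(L_1L_2/K)=\mathbf H(L_1/K)\cap\mathbf H(L_2/K)=\mathbf H(L_1/K)$, and comparing indices via surjectivity of the Artin map forces $[L_1L_2:K]=[L_1:K]=[L_2:K]$, hence $L_1=L_1L_2=L_2$. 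For (iii), $L_1\subseteq L_2$ implies $\mathbf H(L_2/K)\subseteq\mathbf H(L_1/K)$ because $\Phi_{L_2/K}$ followed by restriction to $\Gal(L_1/K)$ is $\Phi_{L_1/K}$ and norms are transitive; conversely $\mathbf H(L_2/K)\subseteq\mathbf H(L_1/K)$ gives $\mathbf H(L_1L_2/K)=\mathbf H(L_2/K)$, so by uniqueness $L_1L_2=L_2$ and $L_1\subseteq L_2$.

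The main obstacle is step (i) together with the existence theorem in step (ii): the first and second inequalities are the analytic and cohomological heart of the theory, and in a background section of this kind one would simply cite them (for instance \cite{janusz}, where Artin reciprocity and the existence theorem are proved earlier in Chapter V) rather than reprove them. By contrast, the manipulation of moduli, conductors, and the equivalence relation on congruence subgroups — the language in which the statement is phrased — is bookkeeping that goes through cleanly once the Artin map and the precise description of its kernel are available for a cofinal family of moduli.
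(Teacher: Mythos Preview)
The paper does not prove this theorem at all: it is stated purely as a citation of a standard result from class field theory (\cite[Theorem V.9.9]{janusz}), with no accompanying argument. So there is no ``paper's own proof'' against which to compare your proposal.

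Your outline is a reasonable high-level sketch of the classical route to the classification theorem --- reduce to Artin reciprocity plus the existence theorem, and handle the bookkeeping about moduli and equivalence classes separately --- and indeed this is essentially the architecture of the proof in \cite{janusz}. But for the purposes of this paper, the statement is simply quoted as background, and you would be expected to cite it rather than reprove it. Your own closing remark already anticipates this: the first and second inequalities and the existence theorem are not things one redoes in a paper on computing endomorphism rings of abelian surfaces.
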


Lv and Deng showed the following consequences of the classification theorem.

\begin{theorem}[{\cite[Theorem 4.2]{lv-deng}}]
\label{ring-class-field}
Let $\OO\subseteq K$ be an order with conductor $\ff$. Then there exists a unique abelian extension $L$ of $K$ such that all primes of $K$ ramified in $L$ divide $\ff$, and the Artin map 
$$
	\Phi_{L/K, \ff}: I_K(\ff) \to \Gal(L/K)
$$
satisfies $\ker\Phi_{L/K, \ff} = P_{K,\OO}(\ff)$, providing an isomorphism
$$
	\Cl(\OO) \cong \Gal(L/K)
$$
The field $L$ is called the ring class field of $\OO$.  In the case where $\OO = \OO_K$, the ring class field coincides with the Hilbert class field of $K$, which is the maximal abelian unramified extension of $K$.
\end{theorem}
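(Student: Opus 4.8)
The plan is to deduce the statement from the Classification Theorem (Theorem \ref{class-thm}) by exhibiting the right congruence subgroup. The first step is to verify that
$$
	P_{K,1}(\ff) \subseteq P_{K,\OO}(\ff) \subseteq I_K(\ff),
$$
so that $P_{K,\OO}(\ff)$ is a congruence subgroup defined modulo $\ff$. The right-hand inclusion is immediate, since any generator $\alpha\OO_K$ of $P_{K,\OO}(\ff)$ satisfies $\alpha\OO + \ff = \OO$ and is therefore coprime to $\ff$. For the left-hand inclusion, take $\alpha\in\OO_K$ with $\alpha\equiv 1\bmod\ff\OO_K$; since $\ff$ is an ideal of $\OO_K$ we have $\ff\OO_K=\ff$, and $\ff\subseteq\OO$ by definition of the conductor, so $\alpha-1\in\ff\subseteq\OO$ gives $\alpha\in\OO$, while $\alpha\equiv 1\bmod\ff$ gives $\alpha\OO+\ff=\OO$; hence $\alpha\OO_K\in P_{K,\OO}(\ff)$.

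Next I would apply Theorem \ref{class-thm} to the equivalence class $\mathbf H$ of $P_{K,\OO}(\ff)$: there is a unique finite abelian extension $L/K$ with $\mathbf H(L/K)=\mathbf H$. Its conductor $\ff(L/K)$, being the greatest common divisor of all moduli defining congruence subgroups in $\mathbf H$, divides the defining modulus $\ff$ of $P_{K,\OO}(\ff)$; since the primes ramifying in $L$ are exactly those dividing $\ff(L/K)$, every prime of $K$ ramified in $L$ divides $\ff$. Because the reciprocity law holds for the conductor it holds for every multiple, so it holds for $(L,K,\ff)$, and as the unique member of $\mathbf H(L/K)$ with defining modulus $\ff$ the kernel $\ker\Phi_{L/K,\ff}$ must equal $P_{K,\OO}(\ff)$. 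The Artin map $\Phi_{L/K,\ff}$ is surjective, so this yields $I_K(\ff)/P_{K,\OO}(\ff)\cong\Gal(L/K)$.

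It remains to identify $I_K(\ff)/P_{K,\OO}(\ff)$ with $\Cl(\OO)$. Using the isomorphism $I_{K,\OO}(\ff)\xrightarrow{\ \sim\ }I_K(\ff)$, $\aa\mapsto\aa\OO_K$, of \cite[Proposition 3.4]{lv-deng} (already invoked above), the subgroup of $I_{K,\OO}(\ff)$ generated by principal ideals $\alpha\OO$ with $\alpha\OO+\ff=\OO$ is carried precisely onto $P_{K,\OO}(\ff)$; combined with the fact that every ideal class of $\OO$ has a representative coprime to $\ff$ (a standard weak-approximation argument, valid for non-maximal orders as well), this gives $\Cl(\OO)\cong I_{K,\OO}(\ff)/(\text{principal, prime to }\ff)\cong I_K(\ff)/P_{K,\OO}(\ff)$, hence $\Cl(\OO)\cong\Gal(L/K)$. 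When $\OO=\OO_K$ the conductor is $\ff=\OO_K$, so $P_{K,\OO_K}(\OO_K)=P_K$ and $L$ is the Hilbert class field. The main obstacle is this last identification: one must check carefully that the bijection between $\OO$-ideals and $\OO_K$-ideals prime to $\ff$ respects principality and that each class of $\Cl(\OO)$ meets the ideals prime to $\ff$, since these facts are what make $I_K(\ff)/P_{K,\OO}(\ff)$ literally the ring class group rather than merely an isomorphic quotient.
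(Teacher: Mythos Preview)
The paper does not give its own proof of this theorem; it is quoted verbatim as \cite[Theorem~4.2]{lv-deng} and used as a black box. Your derivation from the Classification Theorem is the standard one and is correct: verifying that $P_{K,\OO}(\ff)$ is a congruence subgroup, invoking Theorem~\ref{class-thm} to produce $L$, and then identifying $I_K(\ff)/P_{K,\OO}(\ff)$ with $\Cl(\OO)$ via \cite[Proposition~3.4]{lv-deng} is exactly how the cited reference proceeds. The one step you flag as the ``main obstacle'' (that every class in $\Cl(\OO)$ contains a representative prime to $\ff$, and that the extension/contraction maps preserve principality) is indeed the substantive point and is handled in \cite{lv-deng}; your sketch of it is accurate.
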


By observing the basic properties of the Artin map, this theorem provides the following corollary.

\begin{corollary}[{\cite[Corollary 4.4]{lv-deng}}] 
\label{princ=split}
Let $\OO\subseteq K$ be an order. If $\pp\subseteq\OO$ is a prime ideal coprime to $\ff$, then $\pp$ is principal if and only if $\pp$ splits completely in the ring class field of $\OO$.
 \end{corollary}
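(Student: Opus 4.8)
The plan is to read the corollary off from Theorem~\ref{ring-class-field} together with two standard properties of the Artin map: that it sends an unramified prime to its Frobenius automorphism, and that a prime of $K$ unramified in a finite abelian extension $L$ splits completely in $L$ if and only if that Frobenius automorphism is trivial.

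First I would pass between ideals of $\OO$ and of $\OO_K$ that are coprime to the conductor $\ff$. By the isomorphism $I_{K,\OO}(\ff)\xrightarrow{\sim}I_K(\ff)$, $\aa\mapsto\aa\OO_K$, recalled in the lemma preceding the definition of relation, the prime $\pp\subseteq\OO$ corresponds to $\pp\OO_K$. Since $\pp$ is coprime to $\ff$ the localizations satisfy $\OO_\pp=(\OO_K)_{\pp\OO_K}$, so $\pp\OO_K$ is in fact a prime ideal of $\OO_K$; moreover $\pp\OO_K+\ff=(\pp+\ff)\OO_K=\OO_K$, using $\ff\OO_K=\ff$, so $\pp\OO_K$ lies in $I_K(\ff)$ and does not divide $\ff$, hence is unramified in the ring class field $L$ by Theorem~\ref{ring-class-field}.

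Next I would translate principality into a statement about $P_{K,\OO}(\ff)$. If $\pp=\alpha\OO$ with $\alpha\in\OO$, then $\alpha\OO+\ff=\pp+\ff=\OO$, so $\pp\OO_K=\alpha\OO_K\in P_{K,\OO}(\ff)$; conversely, if $\pp\OO_K=\alpha\OO_K$ with $\alpha\in\OO$ and $\alpha\OO+\ff=\OO$, then $\alpha\OO$ and $\pp$ both lie in $I_{K,\OO}(\ff)$ and have the same image $\alpha\OO_K$ under the isomorphism above, so $\alpha\OO=\pp$. Thus $\pp$ is principal in $\OO$ exactly when $\pp\OO_K\in P_{K,\OO}(\ff)$. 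By Theorem~\ref{ring-class-field} one has $P_{K,\OO}(\ff)=\ker\Phi_{L/K,\ff}$, so this happens exactly when $\Phi_{L/K,\ff}(\pp\OO_K)=1$, i.e.\ when the Frobenius $\sigma_{\pp\OO_K}\in\Gal(L/K)$ is the identity; by the second standard fact above, this is equivalent to $\pp\OO_K$ splitting completely in $L$, which is the assertion.

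I do not expect a genuine obstacle here, since all the hard work is already packaged inside Theorem~\ref{ring-class-field}. The only points requiring care are the localization bookkeeping that identifies principality of $\pp$ in $\OO$ with membership of $\pp\OO_K$ in $P_{K,\OO}(\ff)$, and citing the correct form of the statement ``Frobenius trivial $\iff$ splits completely'' for abelian extensions — namely that the decomposition group at an unramified prime is cyclic, generated by the Frobenius, so it is trivial precisely when the prime has $[L:K]$ distinct factors.
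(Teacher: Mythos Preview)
Your proposal is correct and follows exactly the approach the paper indicates: the paper does not give a detailed proof but simply remarks that the corollary follows from Theorem~\ref{ring-class-field} ``by observing the basic properties of the Artin map'' and cites \cite[Corollary~4.4]{lv-deng}. You have spelled out precisely those basic properties---the correspondence $I_{K,\OO}(\ff)\cong I_K(\ff)$, the identification of principality with membership in $P_{K,\OO}(\ff)=\ker\Phi_{L/K,\ff}$, and the equivalence of trivial Frobenius with complete splitting---so your argument is the natural fleshing-out of what the paper leaves implicit.
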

 
 We conclude this section by making one final observation about class field theory conductors.
 
 \begin{lemma} 
 If $L_1$ and $L_2$ are finite abelian extensions of $K$ such that $L_2 \subseteq L_1$, then the class field theory conductor $\ff(L_2/K)$ divides  $\ff(L_1/K)$.
 \label{conductor-division}
  \end{lemma}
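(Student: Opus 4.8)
The plan is to reduce the statement to the Classification Theorem (Theorem~\ref{class-thm}) together with the functoriality of the Artin symbol. Set $\mathfrak f_1 := \ff(L_1/K)$. From the construction of the class field theory conductor (see \cite[Lemma V.6.2]{janusz} and the surrounding discussion), $\mathfrak f_1$ is itself an admissible modulus for $L_1/K$: it is divisible by precisely the primes of $K$ ramifying in $L_1$, the reciprocity law holds for $(L_1,K,\mathfrak f_1)$, and $\ker\Phi_{L_1/K,\mathfrak f_1} = H_{\mathfrak f_1}(L_1/K)$ lies in $\mathbf H(L_1/K)$.

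The key input is the compatibility of Frobenius elements under restriction. Since $L_2 \subseteq L_1$, restriction of automorphisms gives a surjection $r \colon \Gal(L_1/K) \twoheadrightarrow \Gal(L_2/K)$ which carries the Frobenius of $\pp$ in $\Gal(L_1/K)$ to the Frobenius of $\pp$ in $\Gal(L_2/K)$, for every prime $\pp$ of $K$ unramified in $L_1$. Extending multiplicatively, $\Phi_{L_2/K} = r \circ \Phi_{L_1/K}$ on the group generated by the primes of $K$ unramified in $L_1$; in particular $\ker\Phi_{L_1/K} \subseteq \ker\Phi_{L_2/K}$ there.

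Now I would verify that $\mathfrak f_1$ is also an admissible modulus for $L_2/K$. First, every prime ramifying in $L_2$ ramifies in $L_1$ (a standard consequence of $L_2 \subseteq L_1$) and hence divides $\mathfrak f_1$, so $\Phi_{L_2/K,\mathfrak f_1}$ is defined on $I_K(\mathfrak f_1)$. Second, $P_{K,1}(\mathfrak f_1) \subseteq \ker\Phi_{L_1/K} \subseteq \ker\Phi_{L_2/K}$ by the reciprocity law for $(L_1,K,\mathfrak f_1)$ and the previous paragraph, so the reciprocity law holds for $(L_2,K,\mathfrak f_1)$ as well. Thus $\ker\Phi_{L_2/K,\mathfrak f_1} = H_{\mathfrak f_1}(L_2/K)$ is a congruence subgroup defined modulo $\mathfrak f_1$ belonging to $\mathbf H(L_2/K)$. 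Since $\ff(L_2/K)$ is by definition the greatest common divisor of all moduli defining congruence subgroups in $\mathbf H(L_2/K)$, it divides $\mathfrak f_1 = \ff(L_1/K)$, which is the claim. The only slightly technical ingredients are the restriction compatibility $\Phi_{L_2/K} = r \circ \Phi_{L_1/K}$ and the admissibility of the conductor itself; both are standard facts of class field theory, so I do not anticipate a genuine obstacle here.
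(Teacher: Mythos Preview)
Your argument is correct and follows essentially the same route as the paper: both show that $\ff(L_1/K)$ is an admissible modulus for $L_2/K$ via the chain $P_{K,1}(\ff(L_1/K)) \subseteq \ker\Phi_{L_1/K} \subseteq \ker\Phi_{L_2/K}$, and then invoke the minimality of the conductor. The only difference is that you justify the middle inclusion explicitly through restriction compatibility of Frobenius, whereas the paper simply cites the inclusion-reversing correspondence of the Classification Theorem; these amount to the same thing.
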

  
   \begin{proof} 
 By definition, $\ff(L_2/K)$ divides every modulus for which a congruence subgroup in $\mathbf H(L_2/K)$ is defined, so we simply need to show that there is a congruence subgroup in $\mathbf H(L_2/K)$ defined modulo $\ff(L_1/K)$.  This is equivalent to showing that $\ker\Phi_{L_2/K}$ contains $P_{K,1}(\ff(L_1/K))$.
  This is easy to see by the inclusion-reversing correspondence of Theorem \ref{class-thm} and the definition of $\ff(L_1/K)$, which imply
  $$
  	\ker\Phi_{L_2/K}
		\supseteq \ker \Phi_{L_1/K} 
		\supseteq P_{K,1}(\ff(L_1/K)).
$$
   \end{proof}

\subsection{Existence of relations}
Using the ring class fields defined in the preceding section, we will now find infinitely many relations sufficient for Corollary \ref{maincor}.  To begin, we prove the following technical lemma.

\begin{lemma}
Let $\OO(\ff^+)$ be the order of $K$ which contains $\OO_F$ and corresponds to the ideal $\ff^+\subseteq \OO_F$.  The ring class field $L$ of $K$ of $\OO(\ff^+)$ is a Galois extension of $F$.
\end{lemma}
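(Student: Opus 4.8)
The plan is to use the fact that $K/F$ is a quadratic Galois extension whose Galois group is generated by complex conjugation, which I write as $c$. Since we are in characteristic zero, $L/F$ is Galois precisely when it is normal, i.e. when every $F$-embedding $\sigma\colon L\hookrightarrow\overline F$ satisfies $\sigma(L)=L$. As $K/F$ is normal, such a $\sigma$ restricts to an element of $\Gal(K/F)=\{\mathrm{id},c\}$. If $\sigma|_K=\mathrm{id}$ then $\sigma$ is a $K$-embedding and $\sigma(L)=L$ because $L/K$ is abelian, hence Galois. Fixing an extension $\tilde c$ of $c$ to an automorphism of $\overline F$, if $\sigma|_K=c$ then $\tilde c^{-1}\circ\sigma$ fixes $K$ pointwise, so $(\tilde c^{-1}\sigma)(L)=L$ and therefore $\sigma(L)=\tilde c(L)$. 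Thus everything reduces to showing $\tilde c(L)=L$.

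The next step is to observe that the order $\OO:=\OO(\ff^+)$ is stable under $c$. By Theorem \ref{ideal} we have $\OO=\OO_F+\ff^+\OO_K$; the subring $\OO_F$ and the ring $\OO_K$ are both $c$-stable, and every element of $\ff^+\subseteq\OO_F$ is fixed by $c$, so $c(\OO)=\OO_F+\ff^+\OO_K=\OO$. Consequently the conductor $\ff=\{\alpha\in K:\alpha\OO_K\subseteq\OO\}$ of $\OO$ in $\OO_K$ is also $c$-stable, since both $\OO_K$ and $\OO$ are.

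Finally I would transport the characterization of the ring class field of Theorem \ref{ring-class-field} through the field automorphism $\tilde c$. The field $\tilde c(L)$ is an abelian extension of $\tilde c(K)=K$; a prime of $K$ ramifies in $\tilde c(L)$ if and only if its preimage under $c$ ramifies in $L$, i.e. divides $\ff$, so the primes ramifying in $\tilde c(L)$ are exactly those dividing $c(\ff)=\ff$; and the Artin kernel of $\tilde c(L)/K$ on $I_K(\ff)$ is $\tilde c\bigl(\ker\Phi_{L/K,\ff}\bigr)=c\bigl(P_{K,\OO}(\ff)\bigr)$, which equals $P_{K,\OO}(\ff)$ because the generating set $\{\alpha\OO_K:\alpha\in\OO,\ \alpha\OO+\ff=\OO\}$ is preserved by $c$ once $c(\OO)=\OO$ and $c(\ff)=\ff$. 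These are precisely the defining properties of the ring class field of $\OO$, so the uniqueness clause of Theorem \ref{ring-class-field} forces $\tilde c(L)=L$, and by the reduction of the first paragraph, $L/F$ is Galois.

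The one point that needs to be written out with a little care — though it is not deep — is this last step: recording that a field automorphism carries the ring class field of an order to the ring class field of its image, by checking that the ramification locus, the conductor, and the subgroup $P_{K,\OO}(\ff)$ all transform in the evident compatible way under $\tilde c$. Everything else is then immediate from $c$-stability of $\OO$ and uniqueness of the ring class field.
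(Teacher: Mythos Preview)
Your proof is correct and follows essentially the same approach as the paper: reduce to showing that complex conjugation fixes $L$, observe that the order (and hence its conductor and the congruence subgroup $P_{K,\OO}(\ff)$) is stable under conjugation, and then invoke the uniqueness of the abelian extension attached to that congruence data. Your reduction in the first paragraph is a bit more explicit than the paper's, and you appeal to the uniqueness clause of Theorem~\ref{ring-class-field} where the paper uses the Classification Theorem~\ref{class-thm}, but these are cosmetic differences.
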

 \begin{proof} 
	The proof is analogous to the proof of Lemma 9.3 in \cite{cox}.  Because $K/F$ is an imaginary quadratic extension, its Galois group is generated by complex conjugation, which we denote by $\tau$.  Hence showing that $L/F$ is Galois is equivalent to showing that $\tau(L) = L$.  Theorem~\ref{class-thm} states that there is an inclusion-reversing one-to-one correspondence between equivalence classes of congruence subgroups and abelian extensions of $K$.  Thus, we simply need
$
	\ker(\Phi_{\tau(L)/K, \ff}) 
		= \ker(\Phi_{L/K, \ff})
$
where $\ff = \ff^+\OO_K$.  Notice that $\tau(\ff) = \ff$ because $\ff^+$ is an ideal in a totally real field, and $\tau(\OO_K) = \OO_K$.

Theorem \ref{ring-class-field} tells us that 
$$
	\ker(\Phi_{\tau(L)/K, \ff}) = P_{K,\OO}(\ff).
$$
 It is easy to see that $\tau(P_{K,\OO}(\ff)) = P_{K,\OO}(\ff)$ because an ideal $\aa$ is coprime to $\ff$ if and only if $\tau(\aa)$ is coprime to $\tau(\ff) = \ff$. We have $\ker(\Phi_{\tau(L)/K, \ff}) = \ker(\Phi_{\tau(L)/\tau(K), \ff})= \tau(\ker(\Phi_{L/K, \ff}))$ by definition because $\tau(K) = K$. Therefore
 $$
 	\ker(\Phi_{\tau(L)/K, \ff})
 		=\tau(\ker(\Phi_{L/K, \ff})))
		= \tau(P_{K,\OO}(\ff))
		= P_{K,\OO}(\ff)
		= \ker(\Phi_{L/K, \ff}),
 $$
 proving that $\tau(L)$ and $L$ corresponding to the same congruence subgroup, as desired.
 \end{proof}
 
 Now we can prove the existence of the needed relations, using the same idea as Bisson and Sutherland.
 
 \begin{prop}
 \label{relations-exist}
	Assume that $\OO_K^* = \OO_F^*$. Let $\vv\subseteq \OO_F$ be an ideal which is divisible by $\pp^k\subseteq \OO_F$.   If $N(\pp) \geq 3$, then there are infinitely many relations $R$ satisfying the assumption of Corollary $\ref{maincor}$ above.  
	Specifically, write $\ff^+_1 =  \pp^{k - 1 - v_\pp(\vv) } \vv$ and $\ff^+_2 = \pp^k$.  
	Then there are infinitely many primes $\frakell \subseteq \OO_F$ which split in $\OO_K$ such that $R$ holds in $\OO(\ff_1^+)$ but not $\OO(\ff_2^+)$, where $R$ is the one-element relation consisting of any prime lying over $\frakell$.
\end{prop}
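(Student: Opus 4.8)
The plan is to reinterpret the one–element relation $(\LL)$ through ring class fields, apply the Chebotarev density theorem, and reduce everything to a concrete unit–group computation where the hypotheses bite. Fix a prime $\frakell\subseteq\OO_F$ coprime to $\vv$ that splits in $\OO_K$, say $\frakell\OO_K=\LL_K\overline{\LL_K}$, and let $\LL$ be a prime of $\OO_F[\pi]$ above $\frakell$, so that $\LL\OO_K=\LL_K$. For any $\ff^+\mid\vv$ the isomorphism $I_{K,\OO(\ff^+)}(\ff^+\OO_K)\cong I_K(\ff^+\OO_K)$ carries $\LL\OO(\ff^+)$ to $\LL_K$, so by Theorem~\ref{ring-class-field} and Corollary~\ref{princ=split} the relation $(\LL)$ holds in $\OO(\ff^+)$ precisely when $\Phi_{L/K}(\LL_K)=1$, equivalently when $\LL_K$ splits completely in the ring class field $L$ of $\OO(\ff^+)$. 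Because each such $L$ is Galois over $F$ by the preceding lemma, this is independent of the chosen prime $\LL$ over $\frakell$, which is why ``any prime lying over $\frakell$'' is allowed. Writing $L_1,L_2$ for the ring class fields of $\OO(\ff_1^+),\OO(\ff_2^+)$, it therefore suffices to produce infinitely many primes $\LL_K$ of $\OO_K$, coprime to $\vv$, that split completely in $L_1$ but not in $L_2$. Since $L_1L_2/K$ is abelian, Chebotarev reduces this to exhibiting a single $\sigma\in\Gal(L_1L_2/K)$ with $\sigma|_{L_1}=1$ and $\sigma|_{L_2}\neq1$; any prime $\LL_K$ with Frobenius $\sigma$ (away from $\vv$ and the ramification) then works, and it automatically lies over a split prime of $\OO_F$ because it splits completely in $L_1\supseteq K$.

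I would build $\sigma=\Phi_{L_1L_2/K}(\alpha\OO_K)$ from a principal ideal $\alpha\OO_K$ coprime to $\vv$, chosen so that $\alpha\in\OO(\ff_1^+)$ but $\alpha\notin\OO(\ff_2^+)=\OO(\pp^k)$. The first condition puts $\alpha\OO_K$ in $P_{K,\OO(\ff_1^+)}(\ff_1^+\OO_K)$, hence $\Phi_{L_1/K}(\alpha\OO_K)=1$ by Theorem~\ref{ring-class-field}. For the second, the crucial input is the standard exact sequence of the order $\OO(\pp^k)$, whose conductor is $\pp^k\OO_K$,
$$1\to\OO(\pp^k)^\times\to\OO_K^\times\to(\OO_K/\pp^k\OO_K)^\times/(\OO(\pp^k)/\pp^k\OO_K)^\times\to\Cl(\OO(\pp^k))\to\Cl(\OO_K)\to1 .$$
Since $\OO_K^\times=\OO_F^\times$ we get $\OO(\pp^k)^\times=\OO_F^\times=\OO_K^\times$, so the second arrow vanishes, and as $\OO(\pp^k)/\pp^k\OO_K\cong\OO_F/\pp^k$ the sequence yields an injection $(\OO_K/\pp^k\OO_K)^\times/(\OO_F/\pp^k)^\times\hookrightarrow\Cl(\OO(\pp^k))$ sending the residue of $\alpha$ (for $\alpha$ prime to $\pp$) to the class $[\alpha\OO_K]$. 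Consequently $[\alpha\OO_K]\neq1$ in $\Cl(\OO(\pp^k))$ as soon as $\alpha$ is prime to $\pp$ and $\alpha\notin\OO_F+\pp^k\OO_K=\OO(\pp^k)$, which forces $\Phi_{L_2/K}(\alpha\OO_K)\neq1$.

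To exhibit $\alpha$, write $\mathfrak m=\pp^{-v_\pp(\vv)}\vv$ (prime to $\pp$), so $\ff_1^+=\pp^{k-1}\mathfrak m$. By Theorem~\ref{ideal}, $\OO(\pp^{k-1})\supsetneq\OO(\pp^k)$, so there is $\gamma_0\in\pp^{k-1}\OO_K$ with $\gamma_0\notin\OO_F+\pp^k\OO_K$ (read $\pp^0=\OO_F$ if $k=1$); since $\pp^{k-1}\mathfrak m\OO_K+\pp^k\OO_K=\pp^{k-1}\OO_K$, weak approximation gives $\gamma\in\pp^{k-1}\mathfrak m\OO_K$ with $\gamma\equiv\gamma_0\pmod{\pp^k\OO_K}$, and I set $\alpha=1+\gamma$. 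Then $\alpha\in\OO_F+\ff_1^+\OO_K=\OO(\ff_1^+)$, $\alpha\notin\OO_F+\pp^k\OO_K$, and $\alpha\equiv1\pmod{\mathfrak m\OO_K}$ is prime to $\mathfrak m$. For $k\geq2$ one has $\gamma\in\pp\OO_K$, so $\alpha\equiv1\pmod{\pp}$ is prime to $\pp$ and the construction is complete. For $k=1$ one must instead choose the residue of $\alpha$ to be a unit of $\OO_K/\pp\OO_K$ not lying in the subfield $\OO_F/\pp$, then lift it together with $\alpha\equiv1\bmod\mathfrak m\OO_K$ by the Chinese remainder theorem; this is possible exactly when $|(\OO_K/\pp\OO_K)^\times|>|(\OO_F/\pp)^\times|=N(\pp)-1$, and checking the split, inert and ramified cases shows the inequality holds for all $N(\pp)\geq3$, failing only for $\pp$ split with $N(\pp)=2$. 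This is the sole place the hypothesis $N(\pp)\geq3$ is used. With $\alpha$ in hand, $\sigma=\Phi_{L_1L_2/K}(\alpha\OO_K)$ has $\sigma|_{L_1}=1$, $\sigma|_{L_2}\neq1$, so $L_2\not\subseteq L_1$, and Chebotarev furnishes the infinitely many primes $\frakell$; each gives a one–element relation $R=(\LL)$ holding in $\OO(\ff_1^+)$ but not in $\OO(\ff_2^+)$.

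I expect the essential difficulty to be the middle step: controlling the class of a principal $\OO_K$–ideal inside the non‑maximal class group $\Cl(\OO(\pp^k))$. This is precisely where both standing hypotheses are indispensable — $\OO_K^\times=\OO_F^\times$ collapses the unit term so that the residue map into $\Cl(\OO(\pp^k))$ is injective, and, through the $k=1$ case, $N(\pp)\geq3$ is exactly what guarantees that a usable residue exists at all. The remaining ingredients (weak approximation, the identification of $\LL$ with $\LL_K$, and the Chebotarev step) are routine.
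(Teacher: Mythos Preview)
Your argument is correct and arrives at the same key fact $L_2\not\subseteq L_1$, but the packaging differs from the paper's. The paper isolates a standalone conductor lemma (Lemma~\ref{biglemma}): for any order $\OO\supseteq\OO_F$ with $\OO_K^\times=\OO_F^\times$, the class-field-theory conductor $\ff(L/K)$ of its ring class field coincides with the order conductor away from primes of $K$ of norm~$2$. The proposition then follows in one line: $\pp^k\mid\ff(L_2/K)$ while $\pp^k\nmid\ff(L_1/K)$, hence $L_2\not\subseteq L_1$ by Lemma~\ref{conductor-division}, and Chebotarev applied to the sets $S_i$ of primes of $F$ splitting completely in $L_i$ finishes. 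You instead bypass conductors and build an explicit Artin witness $\sigma=\Phi_{L_1L_2/K}(\alpha\OO_K)$ via the exact sequence for $\Cl(\OO(\pp^k))$. The underlying arithmetic is the same---both proofs ultimately compare $|(\OO_K/\pp^k\OO_K)^\times|$ to $|(\OO_F/\pp^k)^\times|$ and kill the unit contribution using $\OO_K^\times=\OO_F^\times$---but the paper's version yields a reusable statement about conductors, while yours is more hands-on and makes the entry points of the two hypotheses very explicit.

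One small wrinkle worth tightening: the clause ``it automatically lies over a split prime of $\OO_F$ because it splits completely in $L_1\supseteq K$'' does not follow as written, since a prime $\LL_K$ of $K$ splitting completely in $L_1/K$ says nothing about how $\frakell=\LL_K\cap\OO_F$ behaves in $K/F$. The fix is immediate: run Chebotarev in $\Gal(L_1L_2/F)$ (which is Galois by the preceding lemma) for the conjugacy class of your $\sigma\in\Gal(L_1L_2/K)\leq\Gal(L_1L_2/F)$; any unramified $\frakell$ with that Frobenius has $\sigma|_K=1$, so $\frakell$ splits in $K$, and $\sigma|_{L_1}=1$, $\sigma|_{L_2}\neq1$ do the rest. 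Equivalently, once $L_2\not\subseteq L_1$ is established you may simply invoke the $S_1\setminus S_2$ formulation as the paper does.
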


\begin{proof}
Let $S_1$ and $S_2$ be the set of primes of $\OO_F$ which split into principal ideals in $\OO(\ff^+_1)$ and  $\OO(\ff^+_2)$, respectively. Our goal is to show that $S_1\setminus S_2$ is infinite.  This proves the claim because every prime of $\OO_F[\pi]$ lying over a prime in $S_1\setminus S_2$ is a relation which holds in $\OO(\ff^+_1)$ but not in  $\OO(\ff^+_2)$, as desired.
	
	 Write $L_1$ and $L_2$ for the ring class fields of $\OO(\ff_1^+)$ and $\OO(\ff_2^+)$, respectively.   An ideal of $\OO(\ff^+_i)$ is principal if and only if it splits completely in $L_i$ by Corollary \ref{princ=split} because $L_i$ is a ring class field.  Hence the sets $S_i$ are the sets of primes in $\OO_F$ which split completely in $L_i$, respectively. 
The Chebotarev Density Theorem implies that $L_2\subseteq L_1$ if and only if $S_1\setminus S_2$ is finite \cite[Proposition VII.13.9]{neukirch}.  But $\pp^k$ divides the conductor of  $\OO(\ff_2)$ and does not divide the conductor of $\OO(\ff^+_1)$ by construction.  Because $N(\pp) \geq 3$, the lemma below implies that $\pp^k$ divides the class field theory conductor $\ff(L_2/K)$ but does not divide $\ff(L_1/K)$.  By Lemma \ref{conductor-division} we have $L_2\not\subseteq L_1$, which completes the proof.
\end{proof}

The following lemma and its proof are a generalization of \cite[Exercises 9.20-22]{cox}.
\begin{lemma}
Let $K$ be a CM field with totally real subfield $F$, and let $\OO\subseteq K$ be an order containing $\OO_F$.
Suppose that 
			$\OO_K^* = \OO_F^*$.
If $L$ is the ring class field of $\OO$, 
then the conductor ideal $\ff$ of $\OO$ and the conductor ideal $\ff(L/K)$ of the ring class field $L$ may only differ by primes of $K$ of norm $2$.
\label{biglemma}
\end{lemma}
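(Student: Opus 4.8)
The plan is to follow the strategy of Cox's exercises 9.20--9.22, combining the classification theorem of class field theory with an explicit computation in a ray class group of $K$. First I would record the easy inclusion $\ff(L/K)\mid\ff$: if $\alpha\in\OO_K$ with $\alpha\equiv 1\bmod\ff$, then $\alpha-1\in\ff\subseteq\OO$, so $\alpha\in\OO$ and is coprime to $\ff$; hence $P_{K,1}(\ff)\subseteq P_{K,\OO}(\ff)$, which by Theorem~\ref{ring-class-field} means $P_{K,\OO}(\ff)$ is a congruence subgroup defined modulo $\ff$, forcing $\ff(L/K)\mid\ff$. (Here I write $\ff^+$ for the $\OO_F$-ideal with $\ff=\ff^+\OO_K$, and I will use the standard fact $\OO/\ff\cong\OO_F/\ff^+$.) It therefore suffices to show $v_\qq(\ff(L/K))=v_\qq(\ff)$ for every prime $\qq\mid\ff$ of $\OO_K$ with $N_{K/\QQ}(\qq)\neq 2$.

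Next I would translate a hypothetical drop at $\qq$ into a subgroup containment. Suppose $v_\qq(\ff(L/K))<v_\qq(\ff)$. Since $L$ is Galois over $F$ (shown above), $\ff(L/K)$ is stable under complex conjugation; combined with the conjugation-stability of $\ff$, this shows $\ff(L/K)$ divides $\mm:=\ff\qq^{-1}$ when $\qq$ is the unique prime of $K$ over $\qq\cap\OO_F$, and divides $\mm:=\ff\cdot(\qq\cap\OO_F)^{-1}\OO_K$ when $\qq\cap\OO_F$ splits in $K$. In either case reciprocity holds for $(L,K,\mm)$, i.e. $P_{K,1}(\mm)\subseteq\ker\Phi_{L/K}$; intersecting with $I_K(\ff)$ and invoking $\ker\Phi_{L/K}\cap I_K(\ff)=\ker\Phi_{L/K,\ff}=P_{K,\OO}(\ff)$ from Theorem~\ref{ring-class-field} gives
$$
	P_{K,1}(\mm)\cap I_K(\ff)\subseteq P_{K,\OO}(\ff).
$$
Passing to the ray class group $I_K(\ff)/P_{K,1}(\ff)$, which sits in the exact sequence $\OO_K^\times\to(\OO_K/\ff)^\times\to I_K(\ff)/P_{K,1}(\ff)\to\Cl(\OO_K)\to 1$, I note that both sides above consist of principal ideals and hence lie in the image of $(\OO_K/\ff)^\times$; using $\OO/\ff\cong\OO_F/\ff^+$ together with the hypothesis $\OO_K^\times=\OO_F^\times$ (so the units of $\OO_K$ already lie in the image of $(\OO_F/\ff^+)^\times$), the containment reduces to $K_1\subseteq W$ inside $(\OO_K/\ff)^\times$, where $W$ is the image of $(\OO_F/\ff^+)^\times$ and $K_1:=\{u\in(\OO_K/\ff)^\times : u\equiv 1\bmod\mm\}$.

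The heart of the argument is then a local counting contradiction at $\qq$. After reducing by the Chinese remainder theorem to the component(s) over $\qq\cap\OO_F$, one compares the (comparatively large) order of $K_1$ with the strictly smaller order of $K_1\cap W$, the point being that valuations at $\qq$ of elements of $\OO_F$ are rigid: when $\qq$ is inert over $F$, $\OO_F$ only reaches the smaller residue field $\OO_F/(\qq\cap\OO_F)$; when $\qq$ is ramified over $F$, the $\qq$-valuation of any element of $\OO_F$ is even, so $\OO_F$ cannot reach the ``odd level'' $\qq^{v_\qq(\ff)-1}$ at all; and when $\qq\cap\OO_F$ splits in $K$, complex conjugation forces the two conjugate components of an element of $\OO_F$ to have equal valuation, which again shrinks the count. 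Carrying out these comparisons shows $|K_1\cap W|<|K_1|$ in every case except precisely when $\qq\cap\OO_F$ splits in $K$, $N\qq=2$, and $v_\qq(\ff)=1$ --- the excluded situation. The resulting contradiction yields $v_\qq(\ff(L/K))=v_\qq(\ff)$ for all $\qq$ with $N\qq\neq 2$, and together with $\ff(L/K)\mid\ff$ this completes the proof.

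I expect the main obstacle to be the class field theory bookkeeping rather than the final counting: specifically, handling the split-over-$F$ case correctly (which is where conjugation-stability of $\ff(L/K)$, hence the Galois property of $L/F$, is genuinely used) and pinning down the identification of the subgroup $P_{K,\OO}(\ff)/P_{K,1}(\ff)$ of the ray class group with the image of $(\OO_F/\ff^+)^\times$. Once the reduction to ``$K_1\subseteq W$'' is in place, the case analysis is elementary.
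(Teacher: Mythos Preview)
Your proposal is correct and follows essentially the same approach as the paper: reduce the containment $P_{K,1}(\mm)\cap I_K(\ff)\subseteq P_{K,\OO}(\ff)$ to $\ker\pi\subseteq\text{Im}\,\beta$ inside $(\OO_K/\ff)^\times$ via the exact sequence and the hypothesis $\OO_K^\times=\OO_F^\times$, and then obtain a contradiction by a local cardinality count according to the splitting type of $\qq$ over $F$. The only minor divergence is in the split case, where you invoke conjugation-stability of $\ff(L/K)$ (hence the preceding lemma that $L/F$ is Galois) to strip both $\qq$ and $\bar\qq$ simultaneously, whereas the paper removes only the single prime $\mathcal P$ and notes directly that $\pi\circ\beta$ is injective because the $\overline{\mathcal P}$-component of $\mm$ is unchanged; the paper therefore does not need the Galois property of $L/F$ in this proof, but your variant works equally well and isolates the exceptional case a bit more sharply.
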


 \begin{proof}
	By definition, the class field theory conductor $\ff(L/K)$ is a divisor of the conductor ideal $\ff$ because $\ff$ is the modulus used to define the extension $L/K$.  Thus, if $\ff(L/K) \neq \ff$, then we may write $\ff = \mathcal P\mm$ where  $\mathcal P$ is a prime ideal of $K$ and $\ff(L/K)$ divides $\mm$.  Assume that this is the case for some prime $\mathcal{P}$ with $N(\mathcal{P}) \neq 2$. We will find a contradiction.

	  According to Theorem \ref{class-thm}, there is a equivalence class $\mathbf H(L/K)$ of congruence subgroups defined with various moduli corresponding to the extension $L/K$.   Because $\ff(L/K)$ divides $\mm$, there is a congruence subgroup $\ker\Phi_{L/K,\mm}$ defined modulo $\mm$. By construction, $P_{K,\OO}(\ff)$ is the restriction of $\ker\Phi_{L/K,\mm}$ to $I_K(\ff)$, i.e.
	$
		\ker(\Phi_{L/K,\mm}) \cap I_{K}(\ff)=  P_{K,\OO}(\ff).
	$ 
	This proves that
	\begin{equation}
	\label{set-containment}
		P_{K,1}(\mm) \cap I_{K}(\ff) \subseteq P_{K,\OO}(\ff).
	\end{equation} 
	because $\ker(\Phi_{L/K,\mm})\supseteq P_{K,1}(\mm)$.
	
	By inspecting definitions, the sequence below is exact.
	$$
		\OO_K^* \to (\OO_K/\ff\OO_K)^* \xrightarrow{\phi} P_K \cap I_{K}(\ff)/P_{K,1}(\ff) \to 1.
	$$
	Define  
		$\pi: (\OO_K/\ff)^* \to (\OO_K/\mm)^*$ and
		  $\beta: (\OO_F/\ff^+)^* \to(\OO_K/\ff\OO_K)^* $
		 as the natural maps induced by quotients.  Notice that $\pi$ is surjective and $\beta$ is injective.
		 One may show that 
		 $$
		 	{ \OO_K^*\cdot \ker\pi = \phi^{-1}(I_K(\ff) \cap P_{K,1}(\mm))}
			$$
		and similarly  
		$$
			 {\OO_K^*\cdot\text{Im}\beta = \phi^{-1}(P_{K,\OO}(\ff))},
		$$
		 which proves that ${\ker \pi \subseteq (\OO_K)^*\cdot  \text{Im}\beta}$  by containment \eqref{set-containment} above. Since $\OO_K^* = \OO_F^*$ and $\text{Im}\beta$ is closed under the action of $\OO_F^*$, this shows
	$$
		\ker \pi \subseteq \text{Im} \beta.
	$$
	
	Recall that if $\aa\subseteq \OO_K$ is an ideal, then
			 $$
			 	|(\OO_K/\aa)^*| 
					= N(\aa) \prod_{\substack{\qq\mid\aa\\ \text{prime}}} \left(1 - \frac{1}{N(\qq)}\right)
			$$ 
			This implies that
		$$
			|\ker(\pi)| 
				= \frac{|(\OO_K/\ff)^*|}{|(\OO_K/\mm)^*|}
				= \begin{cases}
					N(\mathcal{P}) & \text{ if } \mathcal{P}\mid \mm;\\
					N(\mathcal{P}) - 1 & \text{ if } \mathcal{P}\nmid \mm.
			\end{cases}
		$$
		
		We will now conclude the proof by showing that $|\ker\pi| = 1$, which is only possible  if $N(\mathcal{P}) = 2$ and $\mathcal{P}\nmid \mm$, according to the formula for $|\ker\pi|$.  
		To prove this, we consider two cases.  
		Writing $ \pp = \mathcal{P}\cap \OO_F$, either $  \pp\OO_K = \mathcal{P}\overline{\mathcal{P}}$ or $  \pp\OO_K = \mathcal{P}$ .  
		In the former case, one observes that $\pi\circ \beta$ is injective, hence $|\ker\pi \cap \text{Im} \beta| = |\ker\pi| = 1$.  
		Clearly this is only possible if $N(\mathcal{P}) = 2$ and $\mathcal{P}\nmid \mm$.
		
		Now consider the latter case, and suppose $  \pp\OO_K = \mathcal{P}$.  Because $ \pp$ is an inert prime of $\OO_F$ and $\mathcal{P}$ divides $\ff$, it follows that $ \pp$ divides $\ff^+$.  Write $\ff^+ =   \pp \mm_0$ for some $\mm_0 \subseteq \OO_F$.  Consider the (not necessarily exact) diagram
				\begin{displaymath}
			    \xymatrix{
				(\OO_F/\ff^+)^*  \ar[d]_\theta 
				\ar[r]^\beta  & (\OO_K/\ff\OO_K)^* \ar[r]^\pi & (\OO_K/\mm)^*\\
			                    (\OO_F/\mm_0)^*           &  
			                   }
			\end{displaymath}			
			 One shows $\ker \pi = \ker\pi\cap \text{Im} \beta \cong \ker \theta$.  However, we can compute $\ker(\theta)$ in the same way that we computed $\ker(\pi)$ to find that
		 	$$|\ker(\theta)|
				= \frac{|(\OO_F/\ff^+)^*|}{|(\OO_F/\mm_0)^*|}
				 = \begin{cases}
					N(  \pp) & \text{ if }   \pp\mid \mm_0;\\
					N(  \pp) - 1 & \text{ if }   \pp\nmid \mm_0.
			\end{cases}$$
	Because $  P$ is inert, $N(\mathcal{P})  = N(  \pp\OO_K) > N( \pp)$.  Hence $|\ker \pi| = |\ker\theta|$ is impossible, which gives the final contradiction.
 \end{proof}

 \section{ Algorithms}
 \label{alg-sec}
 Now that we have proven that there are sufficiently many class group relations for determining the large prime factors of $\ff^+(A)$, we are able to present generalizations of the elliptic curve algorithms in \cite{bisson-sutherland}.
Even though the results of the previous section apply whenever $A$ is an ordinary simple abelian variety of arbitrary dimension with maximal RM, there are no known results for computing an arbitrary isogeny in such generality. We restrict our attention to a manageable case with the following requirements so that we can compute all isogenies arising from the action of the ideal class group.
 
  \subsection*{Requirements \textbf{(R)}}
 For the remainder of the paper, we will focus on the case where the ordinary, absolutely simple, principally polarized abelian variety $A$ has dimension $g = 2$.  We also assume the following are all true, which summarizes the hypotheses found in Theorems \ref{ideal}, \ref{iso-compute} and \ref{iso-graph}, and Lemma \ref{biglemma}. In particular, $\End A$ is uniquely identified by an ideal $\ff^+(A) \subseteq \OO_F$ and we can compute the isogenies corresponding to the action of $\cl(\End A)$.  Notice that the conditions below are all easily verifiable.
 
 \begin{enumerate}
	 \item $A$ has maximal real multiplication by $F$.
	 \item $\OO_K^* = \OO_F^*$.
	 \item $F$ has narrow class number 1.
	 \item The conductor gap $[\OO_F : \ZZ[\pi + \overline \pi]]$ is not divisible by 2.
 \end{enumerate}
 Notice that that the second assumption is very mild.  It is shown in \cite[Lemma II.3.3]{streng} that $\OO_K^* = \OO_F^*$ is true for every primitive quartic CM field except the cyclotomic field $K = \QQ(\zeta_5)$.  
 
 \subsection*{Heuristic assumptions (\textbf{H})}
  We collect the following heuristic assumptions that we require, and we denote all statements requiring these assumptions will be decorated by the letter (\textbf{H}).  The first three assumptions allow us to compute a class group via \cite{bf}, while the final two are moderate heuristic assumptions, similar to those in \cite{bisson-sutherland}, which are only needed to bound the expected run time of the algorithms. In particular, the latter three assumptions are not needed in order to prove the algorithm is correct.  All running times will be analyzed with the subexponential function
  $$
  	L[a,c](n) = \exp\left( (c+o(1))\log(n)^a(\log\log n)^{1-a}\right)	
  $$
  \begin{enumerate}
	 \item \textbf{GRH} is true.
	 \item \textbf{Smoothness assumption.}	The probability $P(\iota, \mu)$ that an ideal of $\OO$ of norm bounded by $e^\iota$ is a power-product of prime ideals of norm bounded by $e^\mu$ satisfies
	$
		P(\iota, \mu) \geq \exp(-u\log u(1 + o(1)))
	$
	for $u = \log(\iota)/\log(\mu)$.
	\item \textbf{Spanning relations} If $\{\pp_1, \dots, \pp_N\}$ is a factor base of ideals generating $\Cl(\OO)$ where  $N = L[a,c_1](\Delta)$ for some $0 < a < 1$ and $c_1 > 0$, then it suffices to collect $N'$ relations to generate all possible relations if $N'/N = L[b,c_2](\Delta)$ for $0< b< a$ and $c_2 > 0$.
	\item \textbf{Random Norms:} Assume that the norms of the ideals generated in the reduction step of \textsc{FindRelation} have approximately the distribution of random integers in $[1,n]$.  This allows us to analyze the probability that a norm is $B$-smooth.
	 \item \textbf{Random relations.} If $\OO_1$ and $\OO_2$ are as in Corollary \ref{maincor} with sufficiently large discriminants, then the relation for $\OO_1$ generated in Find Relation algorithm does not hold in $\OO_2$ with probability bounded above 0.
	\item \textbf{Integer factorization and smoothness checking.}  The number field sieve with expected running time $L[1/3,c_f](n)$ by \cite{blp} and ECM finds a prime factor $p$ of integer $n$ in expected time $L[1/2,2](p)\log^2 n$ by \cite{lenstra}.	
 \end{enumerate}

 \subsection{Finding Relations}
 \label{findrelationsubsec}
 Following \cite{bissonGRH,bisson, bisson-sutherland}, we will use the reduction of random ideals to produce relations which hold in a given order $\OO\subseteq K$ of discriminant $\Delta = \disc(\OO)$. This is the same idea that is used in ideal class group computation algorithms, such as \cite{bf}. For background on ideal reduction, see \cite[Chapter 5]{thiel}.  Given an ideal $\aa\subseteq \OO\subseteq K$, reduction outputs an ideal
  $\bb$ which is equivalent to $\aa$ in $\Cl(\OO)$ such that $N\bb \leq \Delta^2$.  By taking $N\aa > \Delta^2$, we ensure that $\aa\bb^{-1}$ is a nontrivial relation which holds in $\OO$. The reduction algorithm is polynomial in $\log|\Delta|$ for fields of fixed degree.

To test whether this given relation also holds in a second order, we need to solve the principal ideal problem.  As shown in \cite[\S4.3]{biasse} and \cite{bf}, we can solve the principal ideal problem in subexponential time by  using the heuristic assumptions \textbf{(H)}. In practice, one should compute the class group once and for all at the beginning of the algorithm, then simply do reductions as necessary when checking relations.
To give some flexibility in applications, we use a parameter $\mu > 0$ which can be chosen arbitrarily.
 
  \begin{center}
  \begin{algorithm}
\SetAlgoLined
\Input{Orders $\OO_1$ and $\OO_2$}
\Output{Relation $R$ which holds in $\OO_1$ but not $\OO_2$}

Set  $B = L[1/2, \mu](n)$, $D_1 = \disc(\OO_1)$ and $n = |D_1|^2$.

Pick $x_\LL$ such that $x_\LL \leq B/N(\LL)$ for prime ideals $\LL$ of norm bounded by $B$ such that at most $k_0$ are nonzero and $\prod N(\LL) > n$.

Compute the reduced ideal $\bb =  \prod_\LL \LL^{y_\LL}$ of $\aa = \prod \LL^{x_\LL}$.

\If{ $N(\bb)$ is a $B$-smooth integer and the number of nonzero $y_\LL$ is at most $8\log(|D_1|)^{1/2}$}{
	Let $R$ be the relation $(\LL^{x_\LL - y_\LL})_{N\LL < B}$.
	
	\If{$R$ does not hold in $\OO_2$}{
		\Return $R$.
	}
}

Go to Step 2.
 
 \caption{\textsc{FindRelation}}
\end{algorithm}
\end{center}

Notice that a prime ideal $\LL$ is inverse to its complex conjugate $\overline \LL$ because the totally real subfield $F$ has class number 1. Hence we can ensure that at most one of $\LL$ and $\overline \LL$ appears in the relation $R$. In particular, if $x_\LL - y_\LL < 0$ for any prime ideal $\LL$, then replace the prime power $\LL^{x_\LL - y_\LL}$ in the relation $R$ with $\overline \LL^{y_\LL - x_\LL}$, which is an equivalent ideal in the class group.
Implicitly, we throw out every relation which includes an undesirable prime, i.e. a prime ideal of $\OO_F$ that divides the ideal $\ff^+(\OO_F[\pi])$, or the index $[\OO_F : \ZZ[\pi+\overline\pi]]]$, or the index $[\OO_K : \ZZ[\pi]]$.  There are only $O(\log q)$ such primes, so this does not change the complexity of the algorithm.

\begin{prop}[\textbf{H}]
\label{findspeed}
Given orders $\OO_1$ and $\OO_2$ of discriminants $D_1$ and $D_2$, the algorithm \noindent\textsc{FindRelation} has expected running time
$$
	L[1/2, 1/\mu\sqrt{2}](|D_1|) + \log |D_1|^{1 + \epsilon}L[1/2, c](|D_2|)
$$
where $\mu > 0$ is a parameter that can be chosen arbitrarily, $\epsilon$ is arbitrarily small, and $c$ is the constant from the running time bound of principal ideal testing.
 The output relation $R = (\LL_1^{e_1}, \dots, \LL_k^{e_k})$ has 
exponents $e_i$ bounded by $ L\left[1/2, \mu\sqrt{2}\right](|D_1|)$ 
and $k$ bounded by $\frac{16\sqrt 2}{\mu}(\log|D_1|)^{1/2}$.  The prime numbers $\ell_i$ lying under the $\LL_i$ are bounded by $L\left[1/2, \mu\sqrt{2}\right](|D_1|)$.
 \end{prop}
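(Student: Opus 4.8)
The plan is to estimate the running time as the expected number of executions of the main loop times the expected cost of one execution, with separate bookkeeping for the (rare) iterations that reach the call to the principal ideal solver over $\OO_2$. One loop iteration chooses the $x_\LL$ (negligible); computes the reduced ideal $\bb$ of $\aa=\prod_\LL\LL^{x_\LL}$, which, by interleaving multiplication with reduction so that all intermediate ideals stay reduced, costs $\mathrm{poly}(\log|D_1|)$ for a field of fixed degree; tests whether the integer $N(\bb)\le\disc(\OO_1)^2=|D_1|^2$ is $B$‑smooth; and, only if $N(\bb)$ is $B$‑smooth with at most $8(\log|D_1|)^{1/2}$ prime factors, assembles the candidate relation $R$ and tests whether $R$ holds in $\OO_2$.

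For the iteration count I would invoke the Random Norms heuristic (\textbf{H}) to treat $N(\bb)$ as uniform in $[1,n]$ with $n=|D_1|^2$. Since $B=L[1/2,\mu](n)$, the smoothness parameter is $u=\log n/\log B=(1/\mu+o(1))\sqrt{\log n/\log\log n}$, so $u\log u=(1/(2\mu)+o(1))\sqrt{\log n\log\log n}$, and the Smoothness assumption (\textbf{H}) gives $\Pr[N(\bb)\text{ is }B\text{-smooth}]\ge\exp(-u\log u(1+o(1)))=L[1/2,-1/(2\mu)](n)=L[1/2,-1/(\mu\sqrt2)](|D_1|)$. The cap on the number of prime factors is met with probability bounded below by a positive constant, since the expected number of distinct prime ideals dividing a reduced ideal of norm at most $|D_1|^2$ is $o((\log|D_1|)^{1/2})$ by standard estimates on smooth integers; and by the Random relations heuristic (\textbf{H}) an accepted $R$ fails in $\OO_2$ with probability bounded away from $0$. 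Hence each iteration succeeds with probability at least a constant multiple of $L[1/2,-1/(\mu\sqrt2)](|D_1|)$, only $O(1)$ iterations in expectation reach the $\OO_2$‑test, and the expected number of iterations is $L[1/2,1/(\mu\sqrt2)](|D_1|)$.

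Multiplying by the per‑iteration cost absorbs the $\mathrm{poly}(\log|D_1|)$ reduction, and also the $B$‑smoothness test: by (\textbf{H}) that test costs at most $L[1/2,2](B)\cdot\mathrm{poly}(\log|D_1|)$ via ECM, and $L[1/2,2](L[1/2,\mu](|D_1|^2))$ is of the form $L[1/4,\cdot](|D_1|)=\exp(o(\sqrt{\log|D_1|\log\log|D_1|}))$, hence subexponentially smaller than the iteration count. This gives the first term $L[1/2,1/(\mu\sqrt2)](|D_1|)$. For the $\OO_2$‑test one builds $\prod_i\LL_i^{e_i}$ in $\OO_2$ by binary exponentiation with reduction, using $O\bigl(\sum_i\log e_i\bigr)=(\log|D_1|)^{1+\epsilon}$ ideal operations of cost $\mathrm{poly}(\log|D_2|)$ each, followed by one solution of the principal ideal problem in $\OO_2$ at cost $L[1/2,c](|D_2|)$ by (\textbf{H}), where $c$ is the constant of principal ideal testing; multiplying by the $O(1)$ such iterations yields the second term $(\log|D_1|)^{1+\epsilon}L[1/2,c](|D_2|)$.

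The output bounds then come for free: $0\le x_\LL\le B/N(\LL)\le B=L[1/2,\mu](|D_1|^2)=L[1/2,\mu\sqrt2](|D_1|)$ and $0\le y_\LL\le\log_2 N(\bb)\le 2\log_2|D_1|$, so $|e_i|=|x_\LL-y_\LL|\le B(1+o(1))=L[1/2,\mu\sqrt2](|D_1|)$, and this is unchanged by swapping $\LL^{e_i}$ for $\overline{\LL}^{-e_i}$ when $e_i<0$; the number of nonzero $e_i$ is at most $k_0$ plus the at most $8(\log|D_1|)^{1/2}$ nonzero $y_\LL$, so $k\le\frac{16\sqrt2}{\mu}(\log|D_1|)^{1/2}$ for the parameter $k_0$ of the algorithm; and $\ell_i\le N(\LL_i)\le B=L[1/2,\mu\sqrt2](|D_1|)$. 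The real difficulty is not any single estimate but the bookkeeping: one must confirm that the reduction, the ECM smoothness test, and the ideal arithmetic over $\OO_2$ are each either polynomial in the logarithms or of the form $L[1/2,o(1)]$, so that they dissolve into the $(c+o(1))$ of the $L$‑notation, and one must lean on (\textbf{H}) both for the smoothness probability and for the claim that the sparsity cap and the ``does not hold in $\OO_2$'' event each succeed with probability bounded away from $0$ rather than decaying as the discriminants grow.
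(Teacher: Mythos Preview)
Your proposal is correct and follows essentially the same approach as the paper: both bound the expected number of iterations by the inverse smoothness probability $L[1/2,1/(\mu\sqrt2)](|D_1|)$ via the Dickman estimate, observe that the ECM smoothness test costs only $L[1/4,\cdot]$ and is absorbed, invoke the Random relations heuristic for $O(1)$ calls to the $\OO_2$-test, and read off the bounds on $k$, $e_i$, $\ell_i$ directly from the construction. The only cosmetic difference is in the accounting for the $\OO_2$-test: you form $\prod_i\LL_i^{e_i}$ by binary exponentiation with reduction and then make a single principal-ideal call, whereas the paper follows Biasse's algorithm and decomposes each prime $\LL_i$ over the precomputed factor base at cost $\log(N\LL_i)^{1+o(1)}L[1/2,c_1](|D_2|)$ per prime; both decompositions yield the same $(\log|D_1|)^{1+\epsilon}L[1/2,c](|D_2|)$.
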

 
  \begin{proof} 
Recall that 
$
	B 
		= L[1/2,\mu](n)
		=L[1/2, \mu\sqrt 2](|D_1|)
$
is the smoothness bound on the norms of the primes where  $n = |D_1|^2$.  Notice
$
	\log n/\log B = \frac{1}{\mu}(\log n)^{1/2}(\log\log n)^{-1/2}.
$
  With the bound $1/\rho(z)^{-1} = z^{z + o(1)}$ for the Dickman function $\rho(z)$ and the same argument as \cite[Proposition 6]{bisson-sutherland}, we expect the number of attempts required to find an ideal $\bb$ with $B$-smooth norm to be be asymptotically bounded by $\rho(\log n/\log B)^{-1} = L[1/2, 1/\mu\sqrt2](|D_1|)$, and a $B$-smooth integer in $[1,n]$ is expected to have $(2 + o(1))\log n/\log B$ distinct prime factors.  Thus we expect the number of prime ideals $\LL$ appearing in step 4 to be at most
$$
	k_0 + 8\log n/\log B \leq k_0 + \frac{8\sqrt{2}}{\mu}(\log |D_1|)^{1/2}(\log\log|D_1|)^{-1/2}
$$
By heuristic assumption \textbf{(H)}, elliptic curve factorization \cite{lenstra} identifies a $B$-smooth integer in time $L[1/2,2](B) = L[1/4, \sqrt{2\mu}](n)$ with high probability.  Therefore, $L[1/2, 1/\mu\sqrt{2}](|D_1|) $ is a bound on the amount of time spent finding a relation which holds in $\OO_1$ with $B$-smooth norm.

Step 6 is solving the Principal Ideal Problem in $\OO_2$, which can be done by following Biasse's algorithm \cite[Algorithm 7]{biasse}. First, one computes the class group in time $L[1/2, c](|D_2|)$ by \cite[Theorem 6.1]{bf}. This involves finding a set of relations which span all relations on a set of primes $\{\mathcal P_1, \dots, \mathcal P_M \}$ which generate $\cl(\OO_2)$.  Then, each ideal $\mathfrak L$ appearing in the relation $R$ is reduced, if necessary, to an equivalent product over the generating set, i.e. $\mathfrak L = (\alpha)\mathcal P_1^{e_1}\dots \mathcal P_M^{e_M}$ for some $\alpha \in K$ and $e_i \geq 0$.  Each reduction takes time 
$
	\log(N(\mathfrak L))^{1+o(1)}L[1/2, c_1](|D_2|)
$
for some constant $c_1 > 0$ by \cite[Proposition 3.1]{biasse}.
Recall that the ideals are bounded by 
$$
	\log N(\mathfrak L) 
		< \log B 
		\leq \log L[1/2, \mu](|D_1|^2)
		\ll  (\log|D_1|)^{1/2} (\log\log|D_1|)^{1/2}
$$
By combining this bound with the bound on the number of primes appearing in $R$, the cost of solving the principal ideal problem is bounded, for some constant $c > 0$, by
$
	{\log |D_1|^{1 + \epsilon}L[1/2, c](|D_2|)}.
$
 We succeed in finding a relation which does not hold in $\OO_2$ within $O(1)$ tries because \textbf{(H)} assumes that the probability of success is bounded above 0. Thus, we obtain the final bound on the running time.
The bounds on $k$, $\ell_i$ and $e_i$ follow directly from the construction of the relation in the algorithm.
  \end{proof}

\subsection{Computing from above}

 \begin{center}
  \begin{algorithm}
  \label{compute-from-above}
\SetAlgoLined
\Input{Abelian variety $A$ satisfying requirements \textbf{(R)}}
\Output{The ideal $\ff^+\subseteq \OO_F$ identifying $\End A$}

Determine the ideal $\vv\subseteq \OO_F$ defining the order $\OO_F[\pi]\subseteq \OO_K$.

Fix bound $C \geq 3$ and initiate $\uu = 1$.

\For{every $\pp\subseteq\OO_F$ with $N\pp < C$ which divides $\vv$}
{Use isogeny climbing to determine $v_\pp = v_\pp(\ff^+(A))$.

Replace $\uu$ with the product $\uu\pp^{v_\pp}$.

Update $A$ to be the abelian variety found by isogeny climbing 

Remove powers of $\pp$ from $\vv$.}

\For{every $\pp\subseteq\OO_F$ with $N\pp \geq C$ which divides $\vv$}
{Find relation $R$ which holds in $\OO(\vv/\pp)$ but not $\OO(\pp)$.\label{for-loop}

\If{ $R$ does not hold in $\OO(\ff^+(A))$}{
Update $\uu$ to be the product $\uu\pp$}}

\Return $\ff^+(A) = \uu$.
 
  \caption{{Computing $\ff^+(A)$ from above}}
\end{algorithm}
\end{center}

Now we present our algorithm for computing $\ff^+(A)$.  As noted in Section \ref{sp}, we can take care of all ``small" prime factors, i.e. primes $\pp$ with $N(\pp) \leq C$ for some $C \geq 2$, by ``isogeny climbing".  The bound $C$ can be chosen arbitrarily  In the presentation of this and all remaining algorithms, we assume that $\ff^+(A)$ is not divisible by the square of any large primes, although the algorithms can be easily modified to handle this possibility. For example, we can modify Algorithm \ref{compute-from-above} by finding relations in Step \ref{for-loop} corresponding to $\pp^k$ for every $k\geq 1$ such that $\pp^k \mid \vv$.  Instead, we simplify the presentation by only checking $k = 1$.
The correctness of this algorithm immediately follows immediately from Corollary \ref{maincor}. There are infinitely many class group relations $R$ as required in Step \ref{for-loop} according to Proposition \ref{relations-exist}.

\begin{prop}[\textbf{H}]
	Algorithm \ref{compute-from-above} has expected running time 
$$
	L[1/2, 2c](q)+ L[1/2, 2\sqrt{d+1}](q).
$$
where $c$ is the constant from principal ideal testing, $d$ is the constant from Theorem \ref{iso-compute}.
 \end{prop}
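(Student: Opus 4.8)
The plan is to bound the two loops of Algorithm~\ref{compute-from-above} separately, after first controlling how many primes can occur. Both loop bodies range only over prime divisors $\pp$ of $\vv = \ff^+(\OO_F[\pi])$, and $N_{F/\QQ}(\vv) \le |\disc(\OO_F[\pi])|^{1/2} \le |\disc(\ZZ[\pi])|^{1/2} = q^{O(1)}$ by Corollary~\ref{ideal-alg}, so there are only $O(\log q)$ such primes; the first loop in fact runs only $O(1)$ times, since there are boundedly many primes of $\OO_F$ of norm below the fixed constant $C$. For each small prime $\pp$, the component of the $\frakell$-isogeny graph containing $A$ is a volcano by Theorem~\ref{iso-graph} — its hypotheses hold because $\OO_K^\times = \OO_F^\times$ and $F$ has class number~$1$ — with $v_\pp(\vv) = O(\log q)$ levels, so the volcano-navigation routines of \cite{sutherland} locate the level of $A$, perform the ascent, and return the requisite isogenous variety using $O(\log q)$ many $\frakell$-isogenies of the bounded degree $N(\pp) < C$, each computed in $\mathrm{poly}(\log q)$ time by \cite{cr,djrv} (their hypotheses — a totally positive generator for $\frakell$, and $2\ell \nmid [\OO_F : \ZZ[\pi+\overline\pi]]$, which follows for every $\ell$ from the hypothesis $2 \nmid [\OO_F : \ZZ[\pi+\overline\pi]]$ in requirement \textbf{(R)} — all hold). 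Hence the first loop costs $\mathrm{poly}(\log q)$, negligible against any $L[1/2,\cdot](q)$.

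For a single large prime $\pp$, the iteration runs \textsc{FindRelation} on the pair of orders prescribed by Corollary~\ref{maincor} (call them $\OO_1,\OO_2$; suitable relations exist by Proposition~\ref{relations-exist}), then tests the output $R = (\LL_1^{e_1},\dots,\LL_k^{e_k})$ against $A$ by evaluating the isogeny chain $\prod_i \phi_{\LL_i}^{e_i}$ out of $A$ and comparing Rosenhain invariants. I would take the cost of \textsc{FindRelation} verbatim from Proposition~\ref{findspeed} (which, like the present proposition, is conditional on the heuristics \textbf{(H)}). For the chain, the crucial point is that the exponents $e_i$ are subexponentially large, so each $\phi_{\LL_i}^{e_i}$ must be evaluated as $e_i$ successive $\frakell_i$-isogenies of bounded degree; the total number of steps is $\sum_i |e_i| \le k\cdot\max_i|e_i| = O((\log q)^{1/2})\cdot L[1/2,\mu\sqrt2](|D_1|)$ by Proposition~\ref{findspeed}, and by Theorem~\ref{iso-compute} each step costs $L[1/2, d\,c_0](q)$, where $c_0$ is the constant with $\ell_i \le L[1/2,c_0](q)$ (again read off from Proposition~\ref{findspeed}); Theorem~\ref{iso-compute}'s hypotheses hold as above, using in addition that \textsc{FindRelation} discards every relation meeting a prime dividing $[\OO_K:\ZZ[\pi]]$, $[\OO_F:\ZZ[\pi+\overline\pi]]$ or $\vv$.

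Next I would pass to the uniform parameter $q$. By Corollary~\ref{ideal-alg}, $|\disc(\OO_1)|,|\disc(\OO_2)| \le |\disc(\ZZ[\pi])| = q^{O(1)}$, so every bound of the form $L[1/2,\kappa](|D_i|)$ rewrites as $L[1/2,\kappa'](q)$ for the correspondingly rescaled constant $\kappa'$, while the $\log|D_i|^{1+\epsilon}$ factors and the $O(\log q)$ outer loop become multiplicative $\mathrm{poly}(\log q)$ factors absorbed into the $o(1)$. The per-iteration cost then reduces to three subexponential terms: a smooth-norm-search term $L[1/2,\Theta(1/\mu)](q)$ from \textsc{FindRelation}, a principal-ideal-problem term $L[1/2,\Theta(c)](q)$ that does not depend on $\mu$, and an isogeny-chain term $L[1/2,\Theta(\mu(1+d))](q)$. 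Choosing the free parameter $\mu$ in \textsc{FindRelation} to balance the first and third terms gives $\mu = \Theta(1/\sqrt{d+1})$, whereupon both become $L[1/2,2\sqrt{d+1}](q)$; combined with the $L[1/2,2c](q)$ from the principal ideal problem and summed over the $O(\log q)$ large primes, this is exactly the claimed bound.

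I expect the main obstacle to be precisely this final bookkeeping rather than any conceptual step: one must ensure the isogeny chain is evaluated link by link so that the subexponential exponents $e_i$ enter the running time multiplicatively rather than catastrophically, verify that the hypotheses of Theorem~\ref{iso-compute} genuinely hold at every link of every chain (this is where narrow class number~$1$ and requirement \textbf{(R)} are used repeatedly), and track the rescaling constants faithfully when passing from $|\disc(\OO)|$ to $q$ — it is the interaction of these that pins down the constants $2c$ and $2\sqrt{d+1}$ and the optimal value of $\mu$.
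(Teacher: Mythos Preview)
Your proposal is correct and follows essentially the same approach as the paper: bound the number of prime divisors of $\vv$ by $O(\log q)$, dispatch the small-prime loop as polynomial in $\log q$, cost each large-prime iteration as \textsc{FindRelation} plus the isogeny chain via Proposition~\ref{findspeed} and Theorem~\ref{iso-compute}, and optimize the free parameter $\mu$ to balance the $1/\mu$ and $\mu(d+1)$ terms. The one place to tighten is the discriminant bound: to recover the precise constants $2c$ and $2\sqrt{d+1}$ you need $\log|D_i| \le (4+o(1))\log q$ from \cite[Lemma~6.1]{bisson} (so $L[1/2,\kappa](|D_i|) = L[1/2,2\kappa](q)$), not merely the $q^{O(1)}$ coming from Corollary~\ref{ideal-alg}, which would give a larger rescaling factor.
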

  \begin{proof} 
Computing $\vv$ takes polynomial time by Corollary \ref{ideal-alg}, and factoring $\vv$ reduces in polynomial time to factoring its norm $N(\vv)$.  This is done in time $L[1/3,c_f](q)$ by \textbf{(H)}.

The number of iterations in the algorithm is the number of primes dividing $\vv$, so there are only $O(\log q)$ iterations needed.  Recall that $\log |\disc(\ZZ[\pi,\overline\pi])| < 4\log q + 12\log 2$ by \cite[Lemma 6.1]{bisson}. Since we only consider orders containing $\OO_F[\pi]\supseteq \ZZ[\pi,\overline\pi]$, this means that \textsc{FindRelation} takes time
$$
	L[1/2, 1/\mu\sqrt{2}](|D_1|) + \log|D_1|^{1 + \epsilon}L[1/2, c](|D_2|)
		= L[1/2, \sqrt 2/\mu](q) + L[1/2, 2c](q)
$$
 for each choice of $\OO_1$ and $\OO_2$, by the proposition above.

Computing whether a relation $R = (\LL_1^{e_1}, \dots, \LL_k^{e_k})$ holds for $A$ requires computing $\sum_{i = 1}^k e_i = L[1/2,2\mu\sqrt{2}](q)$ many isogenies which each take time $ L[1/2,2\mu d\sqrt{2}](q)$ to compute by Theorem \ref{iso-compute}.     Thus the expected running time is 
$$
	 L[1/2, \sqrt 2/\mu](q) + L[1/2, 2c](q)+ L[1/2,2\mu(d+1)\sqrt{2}](q).
$$
Solving $\sqrt 2/\mu = 2\mu(d+1)\sqrt{2}$ presents $\mu = 1/\sqrt{2(d+1)}$ as an optimal choice for $\mu$.  Inserting this into the bound above, we obtain the desired bound:
$$
	L[1/2, 2c](q)+ L[1/2, 2\sqrt{d+1}](q).
$$
  \end{proof}

\subsection{Certifying and Verifying}

\begin{center}
  \begin{algorithm}
\SetAlgoLined
\Input{CM field $K$ with maximal totally real subfield $F$, ideals $\uu, \vv\subseteq \OO_F$}
\Output{Certificate $C$}

\For{For every prime $\pp$ with $v_\pp(\vv) - v_\pp(\uu) > 0$}
{Find a Relation $R_\pp$ which holds in $\OO(\uu)$ but not $\OO(\pp)$}

\For{For every prime $\pp$ of $\uu$}
{Find a Relation $R_\pp$ which holds in $\OO(\uu\pp^{-1})$ but not $\OO(\pp)$.}

\Return the certificate $C = (\uu, \vv, K, \{R_\pp\}_{\pp\mid \vv})$.

  \caption{\textsc{Certify}}
\end{algorithm}
\end{center}

The relations generated throughout Algorithm \ref{compute-from-above} work for any abelian variety in the given isogeny class which satisfies the requirements \textbf{(R)}.  Collecting these relations without fixing a specific abelian variety $A$ gives the \textsc{Certify} and \textsc{Verify} algorithms below.
 Indeed, \textsc{Certify} does not require an abelian variety as input; it simply gives a certificate that allows anyone to check the claim that $\ff^+(A) = \uu$ via the subsequent \textsc{Verify} algorithm, up to small prime factors. In the presentation below, we ignore the small prime factors, and assume they are taken care of by isogeny climbing, as before.
The two loops in \textsc{Certify} and \textsc{Verify} correspond to checking that $\uu$ divides $\ff^+(A)$ and $\uu$ is not a proper divisor of $\ff^+(A)$, respectively.  Again, the correctness follows immediately from Corollary \ref{maincor}.

  \begin{corollary}[\textbf{H}]
  	The expected running time of \textsc{Certify} is within a factor of $O(\log q)$ of the expected running time of \textsc{FindRelation}.  If $D_1$ is discriminant of the order corresponding to the ideal $\uu$, then the certificate has size $O(\log|D_1|\log\log|D_1|)$.
   \end{corollary}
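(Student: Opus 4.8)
\emph{Plan.} I would verify both claims directly from the pseudocode of \textsc{Certify}, using Proposition~\ref{findspeed} as a black box. Three structural facts do the work. First, \textsc{Certify} runs two loops, each iteration of which makes a single call to \textsc{FindRelation}; the first loop ranges over the primes $\pp$ of $\OO_F$ with $v_\pp(\vv) > v_\pp(\uu)$ and the second over the primes dividing $\uu$, so in both cases the index set consists of primes dividing $\vv$, and the number of iterations is $O(\log q)$ (the number of distinct prime divisors of $\vv$ is $O(\log q)$, as in the analysis of Algorithm~\ref{compute-from-above}). Second, the orders passed to \textsc{FindRelation} --- either $\OO(\uu)$ or $\OO(\uu\pp^{-1})\supseteq\OO(\uu)$ as first argument and $\OO(\pp)$ as second --- all contain $\OO_F[\pi]$, hence have discriminant of absolute value at most $|\disc(\ZZ[\pi,\overline\pi])| = O(q^4)$ by \cite[Lemma~6.1]{bisson}, so in particular $\log|D_1| = O(\log q)$. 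Third, Proposition~\ref{findspeed} gives both the expected cost of a single call and the shape of the relation it outputs.

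\emph{Running time.} Combining the first two facts with the monotonicity of $L[1/2,\cdot]$ and Proposition~\ref{findspeed}, the expected cost of the two loops is at most $O(\log q)$ times the expected running time of one \textsc{FindRelation} call on orders lying between $\OO_F[\pi]$ and $\OO_K$. The only other work --- computing $\uu$ and $\vv$ (polynomial time by Corollary~\ref{ideal-alg}), factoring $N_{F/\QQ}(\vv)$ (time $L[1/3,c_f](q)$ under \textbf{(H)}), and the implicit isogeny climbing for small primes --- is subexponentially dominated by a single \textsc{FindRelation} call, so the total stays within the claimed $O(\log q)$ factor.

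\emph{Certificate size, and the main obstacle.} The certificate is $(\uu,\vv,K,\{R_\pp\}_{\pp\mid\vv})$: recording $K$ by the characteristic polynomial of $\pi$ costs $O(\log q)$ bits, and recording $\uu,\vv\subseteq\OO_F$ by Hermite bases of norm $O(q^4)$ costs $O(\log q)$ bits each. For one relation $R_\pp=(\LL_1^{e_1},\dots,\LL_k^{e_k})$, Proposition~\ref{findspeed} gives $k=O(\sqrt{\log|D_1|})$ and bounds every exponent $e_i$ and every rational prime $\ell_i$ beneath $\LL_i$ by $L[1/2,\mu\sqrt2](|D_1|)$; since $\log L[1/2,\mu\sqrt2](|D_1|)=O(\sqrt{\log|D_1|\log\log|D_1|})$ and there are at most two primes of $\OO_F$ above each $\ell_i$, each pair $(\LL_i,e_i)$ needs $O(\sqrt{\log|D_1|\log\log|D_1|})$ bits, whence a single relation occupies $O(\log|D_1|\sqrt{\log\log|D_1|})$ bits, safely within the stated $O(\log|D_1|\log\log|D_1|)$ --- and the sharper bound $k=O(\sqrt{\log|D_1|/\log\log|D_1|})$ visible in the proof of Proposition~\ref{findspeed} improves this to $O(\log|D_1|)$ bits per relation. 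The one genuinely delicate point is passing from this per-relation estimate to the whole certificate, which stores $O(\log q)$ relations together with $O(\log q)$ ancillary bits: collapsing the total to $O(\log|D_1|\log\log|D_1|)$ requires that $\log q$ and $\log|D_1|$ be of the same order --- equivalently that $\disc(\End A)$ be polynomially large in $q$, which fails only for a sparse set of $q$ --- while otherwise the clean assertion is that each component $R_\pp$ (which \textsc{Verify} consumes one at a time) has the stated size. Sorting out this parameter comparison, together with consistently using the refined bound on $k$ rather than the cruder one stated in Proposition~\ref{findspeed}, is essentially all the full proof needs.
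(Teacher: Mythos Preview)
Your approach is the same as the paper's: bound the number of \textsc{FindRelation} calls by the number of prime divisors of $\vv$, which is $O(\log q)$ via the discriminant estimate $|\disc(\ZZ[\pi,\overline\pi])|\leq 4^6 q^4$, and then bound the certificate size by combining the $k$, $\ell_i$, $e_i$ bounds from Proposition~\ref{findspeed} with the fact that an ideal in a fixed-degree field is stored in $O(\log N(\aa))$ bits. The paper's own argument is considerably shorter than yours and invokes exactly these ingredients.

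Where you go further is in worrying about the aggregation step: the certificate carries $O(\log q)$ relations, yet the asserted bound is $O(\log|D_1|\log\log|D_1|)$ with no explicit $q$-dependence, and you correctly note that collapsing the former into the latter presupposes $\log q\asymp\log|D_1|$. The paper's proof does not address this point at all; it simply states the size bound after quoting the per-relation estimates. So your concern is legitimate, but it is a gap you share with the paper rather than a divergence from it, and your per-relation analysis is if anything sharper (you extract the refined $k=O(\sqrt{\log|D_1|/\log\log|D_1|})$ from inside the proof of Proposition~\ref{findspeed}, which the paper does not do explicitly here).
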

    \begin{proof} 
Using the bound $|\disc(\OO_F[\pi])|\leq |\disc(\ZZ[\pi,\overline\pi])| \leq 4^{6}q^4$ from \cite[Lemma 6.1]{bisson}, we find that the number of prime factors of $\vv$ is $O(\log q)$.  Thus, \textsc{FindRelation} is called $O(\log q)$ many times in \textsc{Certify}.

 By the bounds on the $\ell_i$, $k$ and $e_i$, we see that the size of the certificate is $O(\log|D_1|\log\log|D_1|)$.  Here, we use the fact that $N(\LL_i) \leq \ell^4$  and the size of an ideal $\aa$ in a number field of fixed degree is $O(\log N(\aa))$ \cite[Corollary 3.4.13]{thiel}.
    \end{proof}

\begin{center}
  \begin{algorithm}
\SetAlgoLined
\Input{Abelian variety $A$ satisfying requirements \textbf{(R)}, certificate $C = (\uu, \vv, K, \{R_\pp\}_{\pp\mid \vv})$}
\Output{\texttt{true} or \texttt{false}}

\For{For every prime $\pp$ with $v_\pp(\vv) - v_\pp(\uu) > 0$}
{Check that $R_\pp$ holds in $\End A$ by computing isogenies

Immediately \Return \texttt{false} if failure occurs.}

\For{For every prime $\pp$ dividing $\uu$}
{Check that $R_\pp$ does not hold in $\End A$ by computing isogenies.

Immediately \Return \texttt{false} if a relation holds.}

\Return \texttt{true}
 
  \caption{\textsc{Verify}}
\end{algorithm}
\end{center}

 \begin{prop}[\textbf{H}] 
 Given a certificate $C = (\uu, \vv, K, \{R_\pp\}_{\pp\mid \vv})$ produced by \textsc{Certify} with parameter $\mu > 0$ and $A/\FF_q$, the expected run time of \textsc{Verify} is
  $$
  	L[1/2, \mu(d+1)\sqrt{2}](|D_1|)
  $$
 where $d$ is the constant from Theorem \ref{iso-compute} and $D_1$ is the discriminant of the order identified by $\uu$.
  \end{prop}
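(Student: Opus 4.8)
The plan is to note that \textsc{Verify}, as presented, does nothing but recompute from the certificate, for each relation $R_\pp$, the composition of $\frakell$-isogenies that $R_\pp$ prescribes, and then compare the one or two Rosenhain invariants of the resulting variety with those of $A$. Its cost is therefore essentially
$$
\bigl(\#\text{ relations }R_\pp\bigr)\times\bigl(\#\text{ isogeny steps per relation}\bigr)\times\bigl(\text{cost of one step}\bigr),
$$
and I would bound these three quantities in turn, feeding in Proposition~\ref{findspeed} and Theorem~\ref{iso-compute}, and then multiply.

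First I would count the relations: they are indexed by the primes occurring in the two loops of \textsc{Verify}, so there are at most $\omega(\vv)=O(\log N\vv)=O(\log q)$ of them, using $N\vv\le|\disc(\ZZ[\pi,\overline\pi])|\le 4^{6}q^{4}$. Next, each $R_\pp$ was built by \textsc{Certify} through a call of \textsc{FindRelation} whose first order $\OO_1$ is $\OO(\uu)$ or $\OO(\uu\pp^{-1})$; since $\OO(\uu)\subseteq\OO(\uu\pp^{-1})$ we have $|\disc\OO_1|\le|D_1|$ in every case, so Proposition~\ref{findspeed} bounds $R_\pp=(\LL_1^{e_1},\dots,\LL_k^{e_k})$ by $k\le\frac{16\sqrt2}{\mu}(\log|D_1|)^{1/2}$, $e_i\le L[1/2,\mu\sqrt2](|D_1|)$ and $\ell_i\le L[1/2,\mu\sqrt2](|D_1|)$, where $\ell_i$ lies below $\LL_i$. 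Hence verifying one relation costs at most $\sum_i e_i\le k\cdot\max_i e_i = L[1/2,\mu\sqrt2](|D_1|)$ isogeny steps, the factor $k=|D_1|^{o(1)}$ disappearing into the $o(1)$. Finally, each step is an $\frakell$-isogeny of a variety meeting requirements \textbf{(R)}: its kernel is computable in polynomial time from a factor of $f_\pi\bmod\ell$; requirement~3 (narrow class number $1$) makes $\frakell$ generated by a totally positive element; and requirement~4 together with the convention in \textsc{FindRelation} of discarding primes dividing $[\OO_F:\ZZ[\pi+\overline\pi]]$ gives $2\ell\nmid[\OO_F:\ZZ[\pi+\overline\pi]]$. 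Thus Theorem~\ref{iso-compute} applies, and since $\ell\le L[1/2,\mu\sqrt2](|D_1|)$ the cost of one step is $\ell^{d}\cdot\mathrm{poly}(\log q)\le L[1/2,\mu d\sqrt2](|D_1|)$, with $d$ the exponent of Theorem~\ref{iso-compute}. Multiplying the three bounds and absorbing the $O(\log q)$ relation count yields the claimed
$$
L[1/2,\mu\sqrt2](|D_1|)\cdot L[1/2,\mu d\sqrt2](|D_1|) = L[1/2,\mu(d+1)\sqrt2](|D_1|).
$$

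The step I expect to be the genuine obstacle is the last one, converting the cost of an isogeny to a bound in $|D_1|$: Theorem~\ref{iso-compute} measures that cost in $q$, and one only has $\log|D_1|=O(\log q)$ with no uniform reverse inequality — for example $\End A=\OO_K$ pins $|D_1|$ down while $q\to\infty$. To keep the statement literally correct I would therefore either retain an explicit $\mathrm{poly}(\log q)$ factor or, more in the spirit of the paper, observe that in the relevant regime $|D_1|=q^{\Theta(1)}$, so that the $q$-dependence is genuinely of lower order — the same convention already used when analyzing Algorithm~\ref{compute-from-above}. The remaining pieces — bounding $\omega(\vv)$, quoting Proposition~\ref{findspeed}, and combining the $L[1/2,\cdot]$ functions — are routine.
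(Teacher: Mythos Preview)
Your proposal is correct and follows essentially the same approach as the paper's own proof, which is a terse three-sentence argument: $O(\log q)$ relations, each with $O(\log^{1/2}|D_1|)$ primes of size and exponent bounded by $L[1/2,\mu\sqrt2](|D_1|)$, then quote Theorem~\ref{iso-compute}. Your caveat about the $q$-versus-$|D_1|$ conversion is well taken---the paper simply does not address it and implicitly relies on the same $|D_1|=q^{\Theta(1)}$ convention you identify.
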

  
   \begin{proof} 
There are at most $O(\log q)$ relations in a certificate.  By Proposition \ref{findspeed}, each relation has at most  $O(\log^{1/2}|D_1|)$ distinct primes with exponents $e_i$ bounded by $L[1/2,\mu\sqrt 2](|D_1|)$.  The primes $\ell_i$ are bounded by $L[1/2, \mu\sqrt 2](|D_1|)$, which produces the claim by Theorem \ref{iso-compute}.
   \end{proof}

 \subsection*{Remark:} 
 In \cite[Algorithm 2]{bisson-sutherland}, Bisson and Sutherland define an additional algorithm to compute the endomorphism ring of an ordinary elliptic curve by repeatedly calling their versions of the \textsc{Certify} and \textsc{Verify} algorithms.  This algorithm performs well for large orders because their version of \textsc{FindRelation} uses binary quadratic forms to quickly check class group relations in various orders.  In our case, \textsc{FindRelation} is much more costly because no analogue of binary quadratic forms exists for performing computations in the class group of orders in general CM fields.  As a result, the immediate generalization of \cite[Algorithm 2]{bisson-sutherland} which uses the \textsc{Certify} and \textsc{Verify} algorithms above correctly computes $\ff^+(\End A)$, but does not gain an performance improvement by focusing on abelian varieties with nearly-maximal endomorphism ring.

 \section{Computational Example}
 \label{ex-sec}
 We now give an illustrative example to demonstrate how Algorithm \ref{compute-from-above} computes an endomorphism ring by using class group relations.  All computations where performed using Magma \cite{magma} and the AVIsogenies  library \cite{avi} on a 2.3 GHz Intel Core i5 processor.  Although the programs were not optimized for maximum performance, running times are given below to display how isogeny computation is the major bottleneck of the algorithm.

 \subsection{Example}
 Let $q = 82307$ and let $A$ be the Jacobian of the hyperelliptic curve $C$ defined over $\FF_q$ by 
 $$
 	y^2 = x^5 - 3x^4 + 5x^3 - x^2 - 2x + 1.
 $$
 The curve $C$ is the specialization to $s = t = 1$ of the 2-parameter family given in \cite[\S4.3]{gks}.  By \cite[Proposition 3]{gks}, $A$ has maximal real multiplication by $F = \QQ(\sqrt 5)$, which has narrow class number 1.  The characteristic polynomial of the Frobenius endomorphism $\pi$ is
 $$
 	f_\pi(t) = t^4 + 658t^3 + 263610t^2 + 658q t + q^2.
 $$
By \cite[Theorem 6]{hz}, we deduce that $A$ is absolutely simple.

We begin by computing the ideal $\vv\subseteq \OO_F$ which identifies $\OO_F[\pi]$.  Using Algorithm \ref{compute-ideal}, it takes $0.02$ seconds to compute that $\vv = \pp_{11}\pp_{131}$ where $\pp_{11}$ and $\pp_{131}$ are prime ideals of $\OO_F$ of norm $11$ and $131$, respectively.  Therefore, $\End A$ is one of the following four orders, using the notation of Section \ref{identify-orders}:
 \begin{displaymath}
    \xymatrix{ 
    			& \OO_K \ar@{-}[dl] \ar@{-}[dr] &\\
    \OO(\pp_{11})\ar@{-}[dr] & & \OO(\pp_{131})  \ar@{-}[dl]\\
				&   \OO_F[\pi] &  }
\end{displaymath}

Next, we need suitable class group relations, as described in Corollary \ref{maincor}.  Consider the prime  number $7$, which is inert in $\OO_F$ and splits in $\OO_K$.  We find 
$$
	f_\pi(t) \equiv (t^2 + t + 6)(t^2 + 6t + 6) \mod 7,
$$
 hence the prime ideals of $\OO_F[\pi]$ lying over $7$ are $\LL_1 = (7, \pi^2 + \pi + 6)$ and $\LL_1' = (7, \pi^2 + 6\pi + 6)$.  It takes $0.22$ and $0.16$ seconds, respectively, to find that the ideal $\LL_1\OO(\pp_{11})$ has order $55$ in $\cl(\OO(\pp_{11}))$, and the ideal $\LL_1\OO(\pp_{131})$ has order 60 in $\cl(\OO(\pp_{131}))$.  Therefore the relation $R_1 = (\LL_1^{55})$ holds in $\OO(\pp_{11})$ but not $\OO(\pp_{131})$. Thus $\pp_{11}$ divides the identifying ideal $\ff^+(\End A)$ if and only if the relation $R_1$ does not hold for $A$.  Similarly, $R_2 = (\LL_1^{60})$ is a relation which holds in $\OO(\pp_{131})$ but not $\OO(\pp_{11})$, hence $\pp_{131}$ divides $\ff^+(\End A)$ if and only if $R_2$ does not hold for $A$.

Finally, we need to compute the chains of isogenies corresponding to the two relations $R_1$ and $R_2$. 
The kernel of the isogeny corresponding to the action of the ideal $\mathfrak{L}_1 = (7, \pi^2 + \pi + 6)$ is the rational symplectic subgroup $G\subseteq A[7]$ whose generators are annihilated by $\pi^2 +\pi + 6$. 
Using AVIsogenies, we enumerate all possible rational symplectic subgroups of $A[7]$, find the desired $G$ in this list, and compute the corresponding $(7,7)$-isogeny.  Continuing in this way, it takes $154$ seconds to compute all 60 isogenies, and we find that neither $R_1$ nor $R_2$ holds for $A$.  By Corollary \ref{maincor}, this implies that $\ff^+(\End A) = \vv = \pp_{11}\pp_{131}$, i.e. $\End A \cong \OO_F[\pi]$.\\

This example demonstrates how our algorithm is more efficient than the algorithm of Eisentr\"ager and Lauter \cite{eis-lauter} in cases when the index $[\OO_K : \OO_F[\pi]]$ is divisible by large primes. Indeed, using Eisentr\"ager and Lauter's algorithm on the example above requires the full $131$-torsion of $A$, which is defined over an extension of degree 17030. As a result, this method is very costly compared to the computation of the $(7,7)$-isogenies performed above.  The same benefit is seen in examples of Bisson's algorithm \cite[\S8]{bisson}, since it also exploits class group relations. Our algorithm differs from Bisson's algorithm by considerably restricting the class of abelian varieties considered, which allows us to avoid certain unconventional heuristic assumptions.

 \bibliographystyle{acm}

\end{document}